\newtheorem{theorem}{Theorem}[section]
\newtheorem*{theorem*}{Theorem}
\newtheorem{lemma}[theorem]{Lemma}
\newtheorem{proposition}[theorem]{Proposition}
\newtheorem{corollary}[theorem]{Corollary}
\theoremstyle{definition}
\newtheorem{definition}[theorem]{Definition}
\newtheorem{example}[theorem]{Example}
\theoremstyle{remark}
\newtheorem{remark}[theorem]{\textbf{Remark}}
\numberwithin{equation}{section}
\newcommand{\op}[1]{\operatorname{#1}}
\newcommand{\p}{\prime}
\begin{document}

\title{The asymptotic behavior of the steady gradient K\"ahler-Ricci soliton of the Taub-NUT type of Apostolov and Cifarelli}

\author{Daheng Min}
\address{Mathematisches Institut, University of M\"unster, Germany}
\email{dr.daheng.min@uni-muenster.de}
\date{November 7, 2024}

\begin{abstract}
We first determine the asymptotic cone of the steady gradient K\"ahler-Ricci soliton of the Taub-NUT type constructed by Apostolov and Cifarell in \cite{apostolov2023hamiltonian}. Then we study a special case and prove that it is an ALF Calabi-Yau metric in a certain sense. Finally we construct new ALF Calabi-Yau metrics on crepant resolution of its quotients modeled on it using the method of Tian-Yau-Hein.
\end{abstract}
\maketitle

\section{Introduction}
In \cite{apostolov2023hamiltonian}, families of complete steady gradient K\"ahler-Ricci solitons on $\mathbb{C}^n$ are constructed for $n\geq 2$. In each family, there is a Calabi-Yau metric. In this article, we will mainly consider the family of the Taub-NUT type. In this family, the Calabi-Yau metric is the Taub-NUT metric if $n=2$, and for $n\geq 3$ it is a new example of a complete Calabi-Yau metric. More precisely, it is proved in \cite[Theorem 1.4]{apostolov2023hamiltonian} that, given a partition of the integer $n\geq 2$ as below,
\begin{align}
n = l + \sum_{j=1}^{l-1}d_j, l\geq 2, d_j\geq 0,
\end{align}
there exists an $(l-1)-$dimensional family of non-isometric, irreducible complete steady gradient K\"ahler-Ricci soliton on $\mathbb{C}^n$, all admitting a hamiltonian 2-form of order $l$ and isometry group $U(1)\times\Pi_{j=1}^{l-1}U(d_j)$. One K\"ahler metric $\omega_{l,d_1,\dots,d_{l-1}}$ in each family is a complete Ricci-flat K\"ahler metric on $\mathbb{C}^n$. Moreover, it is proved that the volume growth of the metric is of order $2n-1$.

In this article, we will study the asymptotic cone of the steady gradient K\"ahler-Ricci soliton of the Taub-NUT type of Apostolov and Cifarelli. We will prove the following theorem:
\begin{theorem}\label{theorem introduction 1}
    The asymptotic cone of the K\"ahler-Ricci soliton of the Taub-NUT type of Apostolov and Cifarelli is unique and is $(\prod_{j=1}^{l-1}\mathbb{C}^{d_j+1}/\Lambda)\times \mathbb{R}$. Where $\Lambda$ is a closed subgroup of $\mathbb{T}^{l-1}$ that acts on $\prod_{j=1}^{l-1}\mathbb{C}^{d_j+1}$, and depends on the choice of the parameters.
\end{theorem}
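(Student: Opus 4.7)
The plan is to work directly with the explicit orthotoric/Hamiltonian $2$-form normal form for $\omega_{l,d_1,\dots,d_{l-1}}$ provided in \cite{apostolov2023hamiltonian}, to rescale by the reciprocal of the distance function, and to read off the Gromov--Hausdorff limit from the asymptotics of the profile functions in the ansatz. Concretely, on the open dense set where the eigenvalues $\xi_1<\cdots<\xi_l$ of the Hamiltonian $2$-form are distinct, the metric decomposes as a $2l$-dimensional orthotoric piece in the coordinates $(\xi_k,t_k)$, with $t_k$ angles conjugate to the $\mathbb{T}^{l-1}\times\mathbb{R}$ action, together with $l-1$ constant-scalar-curvature K\"ahler blocks on $\mathbb{CP}^{d_j}$; each block will combine with its associated circle variable through the Hopf fibration to build a factor of complex dimension $d_j+1$. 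The soliton ODE for the profile functions is essentially solved explicitly in \cite{apostolov2023hamiltonian}, which is what makes the asymptotic analysis tractable.

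The heart of the argument is the asymptotic analysis of the profile functions $\Theta_k(\xi_k)$ as $\xi_k\to\infty$. Exactly one of the $l$ directions --- the Taub-NUT one --- has the property that its conjugate circle stays of bounded length at infinity while the associated $\xi$-variable is asymptotic to arclength, and this produces the $\mathbb{R}$-factor in the asymptotic cone. For each of the remaining $l-1$ indices I would verify that $\xi_k$ together with the corresponding Fubini--Study block degenerates, after rescaling, to the flat Euclidean metric on $\mathbb{C}^{d_{k-1}+1}$ written through its Hopf fibration; the leading-order coefficient of $\Theta_k$ determines this identification. The torus quotient $\Lambda$ then arises as follows: the effective $\mathbb{T}^{l-1}$ acting on $\mathbb{C}^n$ embeds into the product torus rotating the $l-1$ asymptotic factors $\mathbb{C}^{d_j+1}$, and $\Lambda$ is the kernel of this homomorphism --- equivalently, the common stabilizer in $\mathbb{T}^{l-1}$ of the asymptotic splitting into the Taub-NUT circle plus the transverse directions. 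Its dependence on the parameters reflects the slopes of the collapsing circle inside the torus Lie algebra.

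Uniqueness of the cone then comes essentially for free: because the rescaled metrics retain the $U(1)\times\prod_{j}U(d_j)$ symmetry and the profile-function asymptotics hold uniformly in the scale rather than merely subsequentially, every blow-down sequence converges to the same space in pointed equivariant Gromov--Hausdorff topology. The main obstacle I anticipate is the asymptotic analysis itself: extracting sharp enough leading behaviour of the $\Theta_k$ to pin down the sublattice $\Lambda\subset\mathbb{T}^{l-1}$, since $\Lambda$ is determined by the direction inside the torus Lie algebra occupied at infinity by the Taub-NUT fibre, and this direction depends delicately on the parameters of the family.
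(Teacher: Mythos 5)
Your overall strategy (compare the soliton at infinity with an explicit model coming from the Hamiltonian $2$-form ansatz, identify the collapsed directions, and obtain the cone as a quotient) is in the same spirit as the paper, but two of your key steps do not hold as stated. First, your identification of $\Lambda$ is wrong: it is not the kernel of the homomorphism from the effective torus into the product torus rotating the factors $\mathbb{C}^{d_j+1}$, nor a ``common stabilizer'' of the asymptotic splitting (a kernel of an embedding would be trivial). In the paper, $\Lambda$ is the \emph{closure} in $\mathbb{T}^{l-1}$ of the cyclic subgroup generated by the recurrence vector $\tau=(\tau_1,\dots,\tau_{l-1})$ obtained by expanding the Taub--NUT direction $e_1$ (the generator $K_1$, dual to $\sigma_1$) in the lattice basis $v_1,\dots,v_l$ of $\mathbb{T}^l=\mathbb{R}^l/\Gamma_v$; equivalently, the cone is the quotient of the flat model $E=((\prod_j\mathbb{C}^{d_j+1}\times\mathbb{R})/\mathbb{Z})\times\mathbb{R}$ by the subtorus $\Lambda^+=\overline{\langle e_1\rangle}$. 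When the $\tau_j$ are irrational this closure is positive-dimensional, the collapse at infinity is along a torus strictly larger than one circle, and the cone dimension drops below $2n-1$ (below the volume growth order) --- this is precisely the phenomenon your ``kernel/stabilizer'' description cannot produce, since it would give the same finite group for all parameter values. Your closing remark that $\Lambda$ is ``determined by the direction inside the torus Lie algebra occupied at infinity by the Taub--NUT fibre'' is the right intuition, but it contradicts the formal definition you propose, and pinning it down requires exactly the computation of $\tau$ from the Vandermonde expansion \eqref{equation expand e_1 in v_j}, together with an orbit-diameter estimate showing that the $\Lambda^+$-orbits have diameter $o(\rho)$ so that the quotient map is an $\epsilon$-GHA at every sufficiently small scale.

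Second, the claim that ``the profile-function asymptotics hold uniformly in the scale'' is false without further work, and this is where the main technical content of the paper lies. The metric $g$ is close to the locally flat model $g^\p$ (obtained by replacing $F_l$ by $P$) only where $\xi_l$ is large; the region $S_{\alpha,c}$ where $\xi_l$ stays bounded while $\xi_1\to-\infty$ (the locus where $F_l$ degenerates, i.e.\ where the Taub--NUT circle closes up) reaches spatial infinity, so at \emph{every} scale there are points at distance comparable to the scale where $g$ is not modeled on the flat metric at all. The paper handles this by Lemmas \ref{lemma connecting singular to regular} and \ref{lemma singular region looks like a line}: any point of $S_{\alpha,c}$ can be joined to the good region by a curve of length $O(\rho^{\frac{1+\alpha}{2}})$, and within $S_{\alpha,c}$ distances agree with $|\xi_1(x_2)-\xi_1(x_1)|$ up to lower-order errors, so this region is metrically thin and collapses onto a half-line of the cone. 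Without an argument of this type, neither the identification of the limit nor your ``uniqueness comes for free'' conclusion is justified; uniqueness in the paper follows because the explicit maps $f_\lambda$ are shown to be $\epsilon$-GHAs for \emph{all} $\lambda<\lambda_0(\epsilon)$, not merely along subsequences.
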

We will explain in detail the construction of Apostolov and CIfarelli in Section \ref{section The steady gradient Kahler-Ricci soliton}, in particular we will introduce the parameters. The group $\Lambda$ will be defined and discussed in Section \ref{section The asymptotic cone of the locally flat metric}. Here we note that the real dimension of $\Lambda$ depends on the parameters and can be $0$ or strictly positive.

Consequently, if we consider the Calabi-Yau metric in the family, then we see that there are many different complete Calabi-Yau metric on $\mathbb{C}^n$ and many of them have different asymptotic cones. This phenomenon also appears in the context of asymptotically conic Calabi-Yau metric, we mention the works \cite{ConlonRochon2021}, \cite{Yangli2017}  and \cite{zbMATH07131296}, which give counter examples to a conjecture of Tian \cite[Remark 5.3]{zbMATH05234298}. In these works, Calabi-Yau metrics on
$\mathbb{C}^n$ with volume growth of order $2n$ and asymptotic cone of the form $V_0\times \mathbb{C}$ are constructed, but in Theorem \ref{theorem introduction 1}, the volume growth is of order $2n-1$.

Another feature of Theorem \ref{theorem introduction 1} is that the asymptotic cone is generally not a smooth cone. Examples of asymptotically conic Calabi-Yau metrics with singular asymptotic cone are studied by Joyce \cite{joyce2000compact} (QALE manifold), Sz\'{e}kelyhidi \cite{zbMATH07131296}, Yang Li \cite{Yangli2017}, Conlon, Degeratu and Rochon \cite{ConlonRochon2019, ConlonRochon2021, conlon2023warped}. Besides the difference of volume growth, another difference between Theorem \ref{theorem introduction 1} and the above works is that in Theorem \ref{theorem introduction 1} the dimension of the asymptotic cone may be strictly smaller than the order of volume growth.

The proof of Theorem \ref{theorem introduction 1} consists of two steps. In the first step, we consider a locally flat metric in Section \ref{section The locally flat metric} and determine its asymptotic cone in Section \ref{section The asymptotic cone of the locally flat metric}. In the second step, we show that the locally flat metric is close to the metric of Apostolov-Cifarelli in a large region, and we prove in Section \ref{section The asymptotic cone of the Kahler-Ricci soliton} that they have the same asymptotic cone.\\

In the special case where $l=2$, the asymptotic cone of the K\"ahler-Ricci soliton is $(\mathbb{C}^{n-1}/\mathbb{Z}_{n-1})\times \mathbb{R}$. And we will show that the Calabi-Yau metric $\omega_{2,n-2}$ in this family is an ALF metric in the following sense.
\begin{theorem}\label{theorem introduction 2}
The Calabi-Yau metric $(\mathbb{C}^n, \omega_{2,n-2}, g_{2,n-2})$ of Apostolov-Cifarelli is an ALF metric in the following sense:
\begin{itemize}
  \item The volume growth of $g_{2,n-2}$ is of order $2n-1$;
  \item The asymptotic cone of $g_{2,n-2}$ is a $(2n-1)$-dimensional metric cone;
  \item The sectional curvature of $g_{2,n-2}$ is bounded by $\frac{C}{\rho}$ for some $C>0$.
\end{itemize}
Here, $\rho$ is the distance function measured by $g_{2,n-2}$ with respect to some point of $\mathbb{C}^n$.
\end{theorem}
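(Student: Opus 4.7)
The first two items follow essentially for free from earlier results. The order $2n-1$ volume growth is part of \cite[Theorem 1.4]{apostolov2023hamiltonian}. For the asymptotic cone, the plan is to specialize Theorem \ref{theorem introduction 1} to the partition $n = 2 + (n-2)$, so that $l=2$, $d_1 = n-2$, and $\mathbb{T}^{l-1}=S^1$. I will verify that the subgroup $\Lambda \subset S^1$ defined in Section \ref{section The asymptotic cone of the locally flat metric} is exactly $\mathbb{Z}_{n-1}$ acting on $\mathbb{C}^{n-1}$ by scalar multiplication; this identification comes from computing the period of the soliton Killing field determined by the $l=2$ construction. Since $\mathbb{C}^{n-1}/\mathbb{Z}_{n-1}$ is flat and of real dimension $2n-2$, the cone $(\mathbb{C}^{n-1}/\mathbb{Z}_{n-1})\times \mathbb{R}$ is a genuine $(2n-1)$-dimensional metric cone.

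The substantive task is the pointwise curvature bound. My plan is to use the explicit form of $\omega_{2,n-2}$ coming from the hamiltonian $2$-form of order $l=2$: after K\"ahler reduction by the $U(1)\times U(n-2)$ action, the metric is encoded in a single profile function of two momentum variables, satisfying an ODE system derived in \cite{apostolov2023hamiltonian}. First I will extract sharp asymptotics of this profile function and of its first two derivatives at infinity from the ODE and the Ricci-flat closing condition, in natural momentum coordinates; the leading order is the flat-cone profile and the first correction should be of size $O(\rho^{-1})$. Then, working in an orthonormal frame adapted to the K\"ahler quotient (separating the $\mathbb{R}$-direction, the $S^1$-fiber, and the transverse $\mathbb{C}^{n-2}$-directions), I will write each block of the curvature tensor as a rational expression in the profile function and its first two derivatives. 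Substituting the asymptotics from the previous step should yield the stated $C/\rho$ bound: two derivatives applied to an $O(\rho^{-1})$ correction yield $O(\rho^{-3})$ in momentum units, which converts to $O(\rho^{-1})$ after renormalization against the $O(1)$ size of the circle fiber and the $O(\rho^{1/2})$ size of the transverse directions characteristic of the Taub-NUT scaling.

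The main obstacle is making the last step uniform. Along the ``axes'' of the torus action, where some of the $U(n-2)$-orbits collapse, the adapted frame degenerates and it must be checked that apparent singularities in moment-map coordinates cancel in the curvature tensor, as they must since the metric itself is smooth. I expect to handle this by the same moment-polytope smoothness analysis that \cite{apostolov2023hamiltonian} carry out near the tip of the soliton, now performed at infinity: cover a neighbourhood of infinity by finitely many charts indexed by the faces of the moment image, and verify the $O(\rho^{-1})$ bound component-by-component in each chart.
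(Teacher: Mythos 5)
Your handling of the first two bullets coincides with the paper's: the volume growth is quoted from \cite[Lemma 5.8]{apostolov2023hamiltonian} (Proposition \ref{proposition volume growth of g}), and the asymptotic cone is the $l=2$ specialization of Theorem \ref{theorem asymptotic cone of g}, where $\tau_1=-\tfrac{1}{n-1}$ forces $\Lambda=\mathbb{Z}_{n-1}$, so the cone $(\mathbb{C}^{n-1}/\mathbb{Z}_{n-1})\times\mathbb{R}$ has dimension $2n-1$. For the curvature bound your route is genuinely different from the paper's: the paper never computes $\op{Rm}$ of the explicit metric; it argues by contradiction (Proposition \ref{proposition better estimation of Rm of g}), rescaling at putative bad points and invoking the $\epsilon$-regularity theorem of Naber--Zhang (Theorem \ref{theorem simple version of Naber-Zhang}) for Einstein metrics whose rescaled unit balls are Gromov--Hausdorff close to balls in $\mathbb{R}^{2n}$ or $\mathbb{R}^{2n-1}$, with the matching rank ($0$ or $1$) of the local fundamental group.

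The difficulty is that your plan, as written, has a genuine gap exactly where the estimate is delicate. Your asymptotic ansatz (``leading order is the flat-cone profile and the first correction should be of size $O(\rho^{-1})$'') fails on the region $S_{\alpha,c}$, where $\xi_2$ stays bounded while $-\xi_1\sim\rho\to\infty$, i.e.\ the end of the fixed locus $\{\xi_2=1\}$ of the collapsing circle. There $F_2(\xi_2)/P(\xi_2)-1=-\xi_2^{-(n-1)}$ is of order one, so $g$ is not a small perturbation of the locally flat model $g^\p$ at all ($|g-g^\p|_g$ is only small on $R_{\alpha,c}$, Proposition \ref{proposition g-g prime in regular region}); the cancellation that makes $g^\p$ flat is destroyed, and the bound $C/\rho$ there --- which is essentially sharp, coming from the $\mathbb{CP}^{n-2}$ factor of size $\sim\rho$ and the two-plane smoothing of size $\sim\rho^{1/2}$ --- cannot be produced by differentiating a small correction twice. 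Your proposed remedy (charts indexed by faces of the moment image, checking that apparent singularities in momentum coordinates cancel) only addresses the degeneration of the adapted frame, i.e.\ local smoothness, not decay at infinity along this locus. A direct computation is in principle possible because in this case the metric \eqref{metric of Apostolov-Cifarelli}--\eqref{end of g} is completely explicit (no ODE asymptotics are needed), but then the uniform estimate on $S_{\alpha,c}$ must be carried out honestly; alternatively one uses the paper's blow-up argument, in which the local model on $S_{\alpha,c}$ is $\mathbb{R}^{2n-3}\times((\mathbb{R}^2\times\mathbb{S}^1)/\mathbb{Z}_{n-1})$ and the collapsed ($k=2n-1$, rank-one) case of Naber--Zhang delivers the bound. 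Note also that even on $R_{\alpha,c}$ the paper's $C^0$-closeness to $g^\p$ does not by itself control curvature --- the paper again goes through Naber--Zhang there --- so your computational route would need genuine $C^2$-asymptotics rather than the $C^0$ comparison available from Section \ref{section The asymptotic cone of the Kahler-Ricci soliton}.
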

This notion of the ALF property has already been considered in \cite{min2023construction}. If we think of the asymptotically conic Calabi-Yau metric as the higher dimensional generalizations of ALE gravitational instantons, then we can regard ALF Calabi-Yau metrics as higher dimensional analogue of ALF gravitational instantons.

In \cite{min2023construction}, many examples of higher dimensional ALF Calabi-Yau metrics of real dimension $4n$ are constructed, and they also have singular asymptotic cones. However, according to Theorem \ref{theorem introduction 2}, there exist ALF Calabi-Yau metrics of any dimension.

The difficulty of the proof of Theorem \ref{theorem introduction 2} is the estimation of curvature. In Section \ref{section The special case}, we apply a result of Naber and Zhang \cite{Naber-Zhang} to show the curvature decay.

Modeled on this ALF Calabi-Yau metric, we can construct new ALF Calabi-Yau metric on the crepant resolution of its quotient. Assume that $\Gamma\subset U(1)\times U(n-1)$ is a finite subgroup such that the singularity $\mathbb{C}^n/\Gamma$ admits a crepant resolution $\pi: Y \rightarrow \mathbb{C}^n/\Gamma$, recall that $(\omega_{2,n-2}, g_{2,n-2})$ is invariant by $U(1)\times U(n-1)$  so it is invariant by $\Gamma$. In Section \ref{section crepant resolution}, we will prove the following theorem using the approach of Tian-Yau's work \cite{zbMATH04186562, zbMATH00059791} and result of of Hein \cite{Heinthesis}, which is a non-compact version of the classical Calabi-Yau theorem.
\begin{theorem}\label{theorem introduction 3}
For any compactly supported K\"ahler class of $Y$ and any $c>0$, there exists an ALF Calabi-Yau metric $\omega^\p$ having the same cohomology class on $Y$ which is asymptotic to $c\omega_{2,n-2}$ near the infinity. More precisely, we have
\begin{align}
|\nabla^k(\omega^\p - c\pi^*\omega_{2,n-2})|_{\omega^\p} \leq C(k,\epsilon)(1 + \rho^\p)^{-2n+3+\epsilon},
\end{align}
where $\epsilon > 0$ is any small constant, $\rho^\p$ is the distance function from a point of $Y$ measured by $\omega^\p$ and $k\geq 0$.
\end{theorem}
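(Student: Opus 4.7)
The plan is to follow the Tian-Yau-Hein strategy for solving the complex Monge-Amp\`ere equation on a non-compact complete K\"ahler manifold. First, I would construct a background K\"ahler form $\omega_0$ on $Y$ representing the prescribed compactly supported K\"ahler class, arranged so that $\omega_0 = c\pi^*\omega_{2,n-2}$ outside a compact neighbourhood $K$ of the exceptional set of $\pi$; this is achieved by adding a compactly supported representative of the class to $c\pi^*\omega_{2,n-2}$ and verifying that positivity can be maintained by enlarging $K$ if necessary. Because $\pi$ is crepant, the $\Gamma$-invariant holomorphic volume form $\Omega = dz_1\wedge\cdots\wedge dz_n$ descends to $\mathbb{C}^n/\Gamma$ and lifts to a nowhere-vanishing holomorphic $n$-form $\Omega_Y$ on $Y$. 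Writing $\omega_0^n = e^F\,\Omega_Y\wedge\bar\Omega_Y$ (up to a universal constant), the Calabi-Yau equation satisfied by $\omega_{2,n-2}$ forces $F$ to be constant outside $K$, so that after absorbing this constant into $c$ we may take $F$ compactly supported.

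The second step is to apply Hein's existence theorem from \cite{Heinthesis} to produce a solution $\phi$ of $(\omega_0 + i\partial\bar\partial\phi)^n = e^{-F}\omega_0^n$. For this I need to verify that $(Y,\omega_0)$ satisfies the standard Tian-Yau-Hein analytic hypotheses: polynomial volume growth of exponent $2n-1$, a Sobolev-type inequality $\mathrm{SOB}(2n-1)$, and curvature decay of order $1/\rho$ along the end. All three are inherited from Theorem \ref{theorem introduction 2}, since $\omega_0$ agrees with the pull-back of $\omega_{2,n-2}$ outside $K$ and $\pi$ restricts to a biholomorphism there. Given these inputs, Hein's theorem yields a solution $\phi$ with the polynomial decay rate $(1+\rho^\p)^{-2n+3+\epsilon}$; this is the canonical rate for a compactly supported source in volume-growth exponent $2n-1$, obtained via the Green's-function asymptotics of order $d(x,y)^{2-(2n-1)}$. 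The higher-order bounds on $|\nabla^k(\omega^\p - c\pi^*\omega_{2,n-2})|_{\omega^\p} = |\nabla^k(i\partial\bar\partial\phi)|_{\omega^\p}$ outside $K$ then follow from weighted interior Schauder estimates applied to the Monge-Amp\`ere equation, using the curvature decay of the model metric. Setting $\omega^\p := \omega_0 + i\partial\bar\partial\phi$ produces the desired ALF Calabi-Yau metric.

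The principal obstacle is the verification of the $\mathrm{SOB}(2n-1)$ condition for $\omega_0$, since the asymptotic cone $(\mathbb{C}^{n-1}/\mathbb{Z}_{n-1})\times\mathbb{R}$ is singular rather than a smooth Euclidean cone. One must control volumes and Sobolev constants of large annuli in $(Y,\omega_0)$ uniformly at infinity. The conic asymptotic structure provided by Theorem \ref{theorem introduction 2}, together with the facts that $\Gamma$ is finite and $\pi$ is biholomorphic outside a compact exceptional locus, are the essential ingredients: the end of $(Y,\omega_0)$ is quasi-isometric to the end of $(\mathbb{C}^n,g_{2,n-2})/\Gamma$, on which the required Sobolev inequality is a consequence of the volume comparison and the conic structure at infinity.
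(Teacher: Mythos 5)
Your overall strategy coincides with the paper's: glue a background K\"ahler form in the given compactly supported class that equals $c\pi^*\omega_{2,n-2}$ at infinity, note that crepancy makes the Ricci potential compactly supported, solve the Monge--Amp\`ere equation by Hein's theorem under the $SOB(2n-1)$ and quasi-atlas hypotheses (which the paper indeed gets from the curvature bound of Theorem \ref{theorem introduction 2}, Proposition \ref{proposition SOB(2n-1)} and Proposition \ref{proposition quasi-atlas Apostolov}), and bootstrap with Schauder estimates on the charts. However, your first step contains a genuine gap. Writing $\omega_0 = c\pi^*\omega_{2,n-2} + \chi$ with $\chi$ a compactly supported representative of the class does not produce a K\"ahler form, and ``enlarging the compact set'' cannot repair this: $\pi^*\omega_{2,n-2}$ is only semi-positive and degenerates along the exceptional locus, while $\chi$ is fixed and has no pointwise positivity there or on the transition region, so positivity genuinely fails in general. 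The paper's Lemma \ref{lemma construction of asymptotic CY metric} handles exactly this point by the Tian--Yau trick: using the global K\"ahler potential $H$ of Proposition \ref{proposition kahler potential of g} (with the properties of Proposition \ref{proposition properties of potential}), one adds a term $C\, i\partial\bar\partial\bigl((1-\zeta_S)H^{\alpha}\bigr)$ with $\alpha<1$ close to $1$, takes $C$ large to dominate on the inner transition annulus, and takes the outer cutoff scale $S$ large so that the error there is $O(S^{-(1-\alpha)})$; without some such mechanism (and without a global potential with $dH\wedge d^cH\le C'H\,dd^cH$) your construction of $\omega_0$ is not justified.

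A second, smaller omission concerns the decay rate. In Hein's theorem the bound $|u|\le C(\epsilon)\rho^{\,2-\mu+\epsilon}$ is not automatic from volume growth and Green's-function heuristics: it requires exhibiting a distance-like function $\tilde\rho\sim 1+\rho$ with $|\nabla\tilde\rho| + \tilde\rho\,|dd^c\tilde\rho|$ bounded. The paper supplies this (Lemma \ref{lemma weight function}) by taking $\tilde\rho$ to be a smoothing of $H^{1/2}$, again relying on the inequality $dH\wedge d^cH\le C'H\,dd^cH$. Also note that what the paper calls $SOB(2n-1)$ is not a Sobolev inequality but Hein's volume-growth/RCA/Ricci condition (Definition \ref{definition of SOB}), verified via Bishop--Gromov and Minerbe's criterion (Proposition \ref{proposition SOB general}); your claim that it is inherited from the ambient metric because $\Gamma$ is finite and $\pi$ is a biholomorphism outside a compact set is the right idea, but the substance of the verification lives in Proposition \ref{proposition SOB(2n-1)} rather than in Theorem \ref{theorem introduction 2} alone.
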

In the work of Van Coevering \cite{VanCoevering}, asymptotically conic Calabi-Yau metrics are constructed in each compactly supported K\"ahler class of crepant resolution of Ricci-flat K\"ahler cone. So Theorem \ref{theorem introduction 3} could be understood as an ALF analogue of the result of \cite{VanCoevering}.

From another point of view, in the case $n=1$, there is a ``Kummer construction'' of ALF-$D_k$ instantons discussed in the work of Biquard and Minerbe \cite{biquard2011kummer}. So we can also think of Theorem \ref{theorem introduction 3} as a higher dimensional analogue of \cite{biquard2011kummer}.

As an application of Theorem \ref{theorem introduction 3}, consider the crepant resolution $\mathcal{K}_{\mathbb{CP}^{n-1}} \rightarrow \mathbb{C}^n/\mathbb{Z}_n$, we have
\begin{corollary}
    There exist ALF Calabi-Yau metrics on $\mathcal{K}_{\mathbb{CP}^{n-1}}$ asymptotic to $(\omega_{2,n-2},g_{2,n-2})/\mathbb{Z}_n$ with asymptotic cone $(\mathbb{C}^{n-1}/\mathbb{Z}_{k(n-1)})\times \mathbb{R}$, where $k = n$ if $n$ is odd and $k=\frac{n}{2}$ if $n$ is even. Here $\mathcal{K}_{\mathbb{CP}^{n-1}}$ is the total space of the canonical bundle of $\mathbb{CP}^{n-1}$.
\end{corollary}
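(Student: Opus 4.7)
\medskip
\noindent\textbf{Proof plan.} The strategy is to apply Theorem \ref{theorem introduction 3} to the action of $\Gamma = \mathbb{Z}_n$ on $\mathbb{C}^n$ generated by $\zeta_n \cdot I_n$, and then to compute the asymptotic cone of the quotient explicitly. The hypotheses of Theorem \ref{theorem introduction 3} hold without effort: the diagonal element $\zeta_n \cdot I_n$ lies in the center of $U(n)$, hence in $U(1) \times U(n-1)$ under the embedding fixed in Section \ref{section The steady gradient Kahler-Ricci soliton}; it lies in $SU(n)$ since $\det(\zeta_n I_n) = \zeta_n^n = 1$; and the canonical bundle $\mathcal{K}_{\mathbb{CP}^{n-1}}$ is the classical crepant resolution of the $\tfrac{1}{n}(1,\ldots,1)$ singularity $\mathbb{C}^n/\mathbb{Z}_n$. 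Theorem \ref{theorem introduction 3} then produces an ALF Calabi-Yau metric on $\mathcal{K}_{\mathbb{CP}^{n-1}}$ asymptotic to $(\omega_{2,n-2}, g_{2,n-2})/\mathbb{Z}_n$, which gives the first assertion of the corollary.

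It remains to identify the asymptotic cone of this new metric. Because $\mathbb{Z}_n$ is a finite group of isometries commuting with the rescalings that define the pointed Gromov-Hausdorff limit at infinity, passing to the asymptotic cone commutes with the $\mathbb{Z}_n$-quotient, so the cone of the ALF Calabi-Yau metric on $\mathcal{K}_{\mathbb{CP}^{n-1}}$ is the quotient of the cone of $g_{2,n-2}$ by the induced $\mathbb{Z}_n$ action. By Theorem \ref{theorem introduction 1} specialised to $l = 2$ and $d_1 = n-2$, the latter cone is $(\mathbb{C}^{n-1}/\mathbb{Z}_{n-1}) \times \mathbb{R}$, and the problem is reduced to describing how the generator $\zeta_n \cdot I_n$ descends to this product.

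The key step is to decompose $\zeta_n \cdot I_n \in U(1) \times U(n-1)$ into a component along the soliton $U(1)$ (whose orbits collapse along the circle fibration built in Section \ref{section The asymptotic cone of the locally flat metric}) and a component transverse to it, acting by a central rotation on $\mathbb{C}^{n-1}$. The first component is killed in the cone while the second combines with the pre-existing quotient $\Lambda = \mathbb{Z}_{n-1}$ to give a cyclic subgroup of $U(1) \subset U(n-1)$ acting on $\mathbb{C}^{n-1}$. Using the explicit form of the soliton Killing field and of $\Lambda$ recorded in Sections \ref{section The steady gradient Kahler-Ricci soliton} and \ref{section The asymptotic cone of the locally flat metric}, one then computes the order of this subgroup: when $n$ is odd, no element of $\langle \zeta_n I_n \rangle$ lies in the collapsing soliton $U(1)$ apart from the identity and the group has order $\operatorname{lcm}(n-1,n) = n(n-1)$, while when $n$ is even the element $(\zeta_n I_n)^{n/2} = -I_n$ coincides with a half-turn of the collapsing $U(1)$ and therefore acts trivially on the cone, halving the order to $\operatorname{lcm}(n-1, n/2) = n(n-1)/2$. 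Since $\gcd(n-1,n) = 1$ in both cases, the resulting subgroup is cyclic and the cone takes the claimed form $(\mathbb{C}^{n-1}/\mathbb{Z}_{k(n-1)}) \times \mathbb{R}$.

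The main obstacle in carrying this out is the last computation: correctly identifying which one-parameter subgroup of the central torus $\mathbb{T}^2 \subset U(1) \times U(n-1)$ collapses on the cone, so that the projection of $\zeta_n \cdot I_n$ onto the transverse direction, and in particular the parity-dependent identification $-I_n \sim 1$ when $n$ is even, can be read off. This is a purely finite-dimensional book-keeping once the explicit description of the cone from Section \ref{section The asymptotic cone of the locally flat metric} is in hand, but it is where all the information about the specific Apostolov-Cifarelli parameters at $l = 2$ enters.
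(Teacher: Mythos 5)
Your proposal is correct and follows essentially the same route as the paper: apply Theorem \ref{theorem introduction 3} (as in Proposition \ref{proposition ALF CY metrics on K_CP apostolov}) to $\Gamma=\mathbb{Z}_n\subset U(1)\times U(n-1)$, then identify the residual action on the cone $(\mathbb{C}^{n-1}/\mathbb{Z}_{n-1})\times\mathbb{R}$ by determining which powers of the generator, corresponding to $\frac{1}{n}(T_1+T_2)$, lie in the collapsed circle in the $e_1$-direction --- exactly the paper's computation in $\mathbb{T}^2=\mathbb{R}^2/\Gamma_v$, which yields $k=n$ for $n$ odd and $k=\frac{n}{2}$ for $n$ even (your parity observation that $-I_n$ collapses is the same phenomenon). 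The only difference is that you leave the final lattice book-keeping implicit, whereas the paper carries it out explicitly via the image of $\frac{1}{n}(v_1+v_2)$ in $\mathbb{R}/\frac{2}{n-1}\mathbb{Z}$, and both arguments land on the same cone $(\mathbb{C}^{n-1}/\mathbb{Z}_{k(n-1)})\times\mathbb{R}$.
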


Finally, let us discuss some aspects that are still open. In \cite{zbMATH07698520}, it is proved that for a complete Calabi-Yau metric of maximal voulume growth, the quadratic curvature decay ($|\op{Rm}|\leq \frac{C}{\rho^2}$) is equivalent to being asymptotically conic. Then is natural to ask whether there is a similar result for non-maximal volume growth. For example, for ALF Calabi-Yau metrics, is there a relation between the quadratic curvature decay and the smoothness of the asymptotic cone? In the four dimensional case, we know that ALF gravitational instantons have faster than quadratic curvature decay and smooth asymptotic cone. But in higher dimensions, we have no examples of (non-trivial) ALF Calabi-Yau metrics with quadratic curvature decay or smooth asymptotic cone, and it will be interesting to find such examples.

In the recent work \cite{cifarelli2024explicitcompletericciflatmetrics} of Cifareli, more complete Calabi-Yau metrics and K\"ahler-Ricci solitons are constructed, generalizing \cite{apostolov2023hamiltonian}. Then it will be interesting to understand the asymptotic cones of these new examples.

\section{The steady gradient K\"ahler-Ricci soliton}\label{section The steady gradient Kahler-Ricci soliton}
In this section we will give a description of the steady gradient K\"ahler-Ricci soliton of the Taub-NUT type of Apostolov and Cifarelli following \cite{apostolov2023hamiltonian}. First we fix $n\geq 2$ and a partition of $n$:
\begin{align}
n = l + \sum_{j=1}^{l-1}d_j, l\geq 2, d_j\geq 0.
\end{align}
We also fix $l$ real numbers $\alpha_1,\dots,\alpha_l$ such that $\alpha_1<\alpha_2<\dots<\alpha_l$, and define $\mathring{D} = (-\infty,\alpha_1)\times(\alpha_1,\alpha_2)\times\dots\times(\alpha_{l-2},\alpha_{l-1})\times(\alpha_l,+\infty)$. Note that there is a gap between the last two intervals. Later we will use $(\xi_1,\dots,\xi_l)\in \mathring{D}$ as its coordinates.

Define
\begin{align}
    p_c(t) = \prod_{j=1}^{l-1}(t-\alpha_j)^{d_j},\\
    p_{nc}(t) = \prod_{j=1}^l(t-\xi_j).
\end{align}
They are polynomials in $t$ of degree $n-l$ and $l$. Observe that for $j=1,2,\dots,l-1$, $(-1)^{l-j}p_{nc}(\alpha_j)>0$.
\begin{remark}
In terms of the hamiltonian 2-form $\phi$, the real numbers $\alpha_1,\dots,\alpha_{l-1}$ are the constant roots of the momentum polynomial $p(t) = (-1)^n\operatorname{pf}(\phi-t\omega)$ with multiplicities $d_1,\dots,d_{l-1}$, while $\xi_1,\dots,\xi_l$ are the non-constant roots of $p(t)$. So we have $p(t) = p_c(t)p_{nc}(t)$.
\end{remark}
Fix a real number $a\in \mathbb{R}$, let
\begin{align}
    P(t) &= \prod_{j=1}^{l-1}(t-\alpha_j)^{d_j+1},\\
    q(t) &= \frac{P^\prime(t) + 2aP(t)}{p_c(t)} = \sum_{j=1}^{l-1}(2a(t-\alpha_j)+(d_j+1))\prod_{k=1,k\neq j}^{l-1}(t-\alpha_k).
\end{align}
They are polynomials in $t$ of degree $n-1$ and $l-1$ ($l-2$ if $a=0$). Observe that for $j=1,2,\dots,l-1$, we have $q(\alpha_j) = (d_j + 1)\prod_{k=1,k\neq j}^{l-1}(\alpha_j-\alpha_k)$, so $(-1)^{l-1-j}q(\alpha_j) >0$.
We also set $F_1(t) = \dots F_{l-1}(t) = P(t)$, $F_l(t) = P(t) - e^{2a(\alpha_l - t)} P(\alpha_l)$.
Note that $F_l(t) >0$ for $t>\alpha_l$.

For $j=1,\dots,l-1$, define $(\check{g}_j^0, \check{\omega}_j^0)$ as the Fubini-Study metric on $\mathbb{CP}^{d_j}$ of constant scalar curvature $2d_j(d_j+1)$, so that $[\check{\omega}_j^0]\in H^2(\mathbb{CP}^{d_j}, \mathbb{Z})$ is the primitive generator. Define $(g_j, \omega_j) = \frac{2(d_j + 1)}{(-1)^{l-1-j}q(\alpha_j)}(\check{g}_j^0, \check{\omega}_j^0)$, so it is a K\"ahler-Einstein metric of constant scalar curvature $(-1)^{l-1-j}d_jq(\alpha_j)$. Formally, we may set $d_l=0$ so that $\mathbb{CP}^{d_l}$ is a point, and $\prod_{j=1}^l\mathbb{CP}^{d_j} = \prod_{j=1}^{l-1}\mathbb{CP}^{d_j}$. We can also define $(\check{g}_l^0, \check{\omega}_l^0)$ and $(g_j, \omega_j)$ using the same formula, and we should think of them as zero tensors in the product $\prod_{j=1}^l\mathbb{CP}^{d_j}$.

For $j=1,\dots,l$, define
\begin{align}
    v_j = (-1)^{l-j}\frac{2(d_j+1)}{q(\alpha_j)}(-\alpha_j^{l-1},\dots,(-1)^r\alpha_j^{l-r},\dots,(-1)^l)\in \mathbb{R}^l
\end{align}
As $\alpha_j$ are distinct, $(v_1,\dots,v_l)$ form a basis of $\mathbb{R}^l$ by the Vandermonde determinant. Let $\Gamma_v$ be the lattice generated by $(v_1,\dots,v_l)$, then $\mathbb{T}^l = \mathbb{R}^l/\Gamma_v$ is a $l$-dimensional torus. Define $P$ as the $\mathbb{T}^l$-principal bundle over $\prod_{j=1}^l\mathbb{CP}^{d_j}$ with connection $1$-form $\theta$ such that
\begin{align}
d\theta = \sum_{j=1}^l\check{\omega}_j^0\otimes v_j.
\end{align}
More precisely, if we write $\theta = (\theta_1,\dots,\theta_l)$, then for $r = 1,\dots,l$, we have
\begin{align}\label{equation d theta}
    d\theta_r = \sum_{j=1}^{l-1}(-1)^{l-j+r}\frac{2}{\prod_{k=1,k\neq j}^{l-1}(\alpha_j-\alpha_k)}\alpha_j^{l-r}\check{\omega}_j^0.
\end{align}

\begin{remark}
Here we recall that $\check{\omega}_j^0$ is the primitive generator in $H^2(\mathbb{CP}^{d_j},\mathbb{Z})$, and $v_j$ is the generator of $\Gamma_v$, so the curvature $2$-form $d\theta$ is indeed integral. In fact, $P$ is diffeomorphic to the $\mathbb{T}^l$-principal bundle over $\prod_{j=1}^l\mathbb{CP}^{d_j}$ corresponding to $\bigoplus_{j=1}^lO_{\mathbb{CP}^{d_j}}(-1)$.
\end{remark}

Let $M^0 = \mathring{D}\times P$, then we define the following metric and 2-form on $M^0$:
\begin{align}\label{equation of g}
    g =& \sum_{j=1}^{l-1}(-1)^{l-j}p_{nc}(\alpha_j)\check{g}_j + \sum_{j=1}^l\frac{p_c(\xi_j)\Delta(\xi_j)}{F_j(\xi_j)}(d\xi_j)^2 \\
    &+ \sum_{j=1}^l\frac{F_j(\xi_j)}{p_c(\xi_j)\Delta(\xi_j)}\left(\sum_{r=1}^l\sigma_{r-1}(\hat{\xi}_j)\theta_r \right)^2,\\
    \omega =& \sum_{j=1}^{l-1}(-1)^{l-j}p_{nc}(\alpha_j)\check{\omega}_j + \sum_{r=1}^l d\sigma_r\wedge\theta_r.
\end{align}
Here, $\Delta(\xi_j) = \prod_{i=1,i\neq j}^l(\xi_j-\xi_i)$, $\sigma_0=1,\sigma_1,\dots,\sigma_l$ are elementary symmetric polynomials of $\xi_1,\dots,\xi_l$ so $p_{nc}(t) = \sum_{r=0}^l (-1)^r t^{l-r}\sigma_r$, and $\sigma_{r-1}(\hat{\xi}_j)$ is the $(r-1)$-th elementary symmetric polynomial of $\{\xi_i|i\neq j\}$.

In \cite{ACGI} and \cite{apostolov2023hamiltonian} it is shown that $(M^0,g,\omega)$ is a steady gradient K\"ahler-Ricci soliton with complex structure $J$ given by
\begin{align}
    Jd\xi_j &= \frac{F_j(\xi_j)}{p_c(\xi_j)\Delta(\xi_j)}\left(\sum_{r=1}^l\sigma_{r-1}(\hat{\xi}_j)\theta_r \right),\\
    J\theta_r &= (-1)^r\sum_{j=1}^l\frac{p_c(\xi_j)}{F_j(\xi_j)}\xi_j^{l-r}d\xi_j.
\end{align}
and the soliton vector field $X$ have Killing potential $a\sigma_1$. To better understand the soliton vector field, we discuss as follows.

As noted in \cite{apostolov2023hamiltonian}, $M^0$ is diffeomorphic to the $(\mathbb{C}^*)^l$-principal bundle $(\mathbb{C}^*)^l\times_{\mathbb{T}^l}P$ corresponding to the split vector bundle $\hat{M} = \bigoplus_{j=1}^lO_{\mathbb{CP}^{d_j}}(-1)$. So we can think of $M^0$ as a dense subset of $\hat{M}$. For $j=1,\dots,l$, denote by $T_j$ the generator of rotation in each component $O_{\mathbb{CP}^{d_j}}(-1)$ of $\hat{M}$. If we identify $\mathbb{R}^l$ with the Lie algebra of $\mathbb{T}^l = \mathbb{R}^l/\Gamma_v$, then $v_j$ corresponds to $T_j$ by our construction of $P$. Denote by $e_1,\dots,e_l$ the canonical basis of $\mathbb{R}^l$, and let $K_1,\dots,K_l$ be the corresponding vector field on $\hat{M}$, then $(K_1,\dots,K_l)$ is dual to $(\theta_1,\dots,\theta_l)$. It follows that the moment map of $K_j$ with respect to $\omega$ is $\sigma_j$. In particular, the soliton vector field is given by $aK_1$. In terms of $T_1,\dots,T_l$, by inverting the Vandermonde matrix, we find that
\begin{align}\label{equation expand e_1 in v_j}
    e_1 = \sum_{j=1}^l(-1)^{l+1-j}\frac{q(\alpha_j)}{2(d_j + 1)}\frac{1}{\prod_{k=1,k\neq j}^l(\alpha_j-\alpha_k)}v_j.
\end{align}
It follows that
\begin{align}\label{equation K_1 in terms of T_j}
    K_1 = \sum_{j=1}^l(-1)^{l+1-j}\frac{q(\alpha_j)}{2(d_j + 1)}\frac{1}{\prod_{k=1,k\neq j}^l(\alpha_j-\alpha_k)}T_j,
\end{align}
and
\begin{align}
    X = a\sum_{j=1}^l(-1)^{l+1-j}\frac{q(\alpha_j)}{2(d_j + 1)}\frac{1}{\prod_{k=1,k\neq j}^l(\alpha_j-\alpha_k)}T_j.
\end{align}

Observe that there is a $\mathbb{T}^{d_j}$-symmetry on $\mathbb{CP}^{d_j}$, combining these $\mathbb{T}^{d_j}$-actions with the $\mathbb{T}^l$-action, we get a $\mathbb{T}^n$-action on $M^0$ and, in fact, $(M^0,g,\omega)$ is $\mathbb{T}^n$ invariant.

Under the blow-down map $\hat{M}\rightarrow \mathbb{C}^n$, we can view $M^0$ as a dense subset of $\mathbb{C}^n$. In \cite{apostolov2023hamiltonian} it is proved that $(g,\omega)$ defined on $M^0$ extends to a smooth $\mathbb{T}^n$-invariant steady gradient K\"ahler-Ricci soliton on $\mathbb{R}^{2n} = \prod_{j=1}^l\mathbb{R}^{2(d_j+1)}$ compatible with the standard symplectic form and with the soliton vector field
\begin{align}
    X = a\sum_{j=1}^l(-1)^{l+1-j}\frac{q(\alpha_j)}{2(d_j + 1)}\frac{1}{\prod_{k=1,k\neq j}^l(\alpha_j-\alpha_k)}X_j,
\end{align}
where $X_j$ is the vector field on $\mathbb{R}^{2(d_j+1)}\cong \mathbb{C}^{d_j+1}$ with flow the multiplication with $e^{2\pi it}$. If, furthermore $a\geq 0$, then the complex structure is $\mathbb{T}^n$-equivariantly biholomorphic to the standard complex structure on $\mathbb{C}^n$ and the metric is complete. In particular, if $a=0$, then we get a complete Calabi-Yau metric $\omega_{l,d_1,\dots,d_{l-1}}$ on $\mathbb{C}^n$.
\begin{remark}
    As we shall see later, the expansion coefficients of $X$ in terms of $X_j$ play an important role in determining the asymptotic cone of $(\mathbb{C}^n, g, \omega)$.
\end{remark}

Regarding the parameters, once the discrete parameters $l,d_1,\dots,d_{l-1}$ are fixed, the above construction depends on the following $l+1$ continuous parameters $a$ and $\alpha_1,\dots,\alpha_l$. As pointed out in \cite{apostolov2023hamiltonian}, for $c>0, d\in \mathbb{R}$, the data $(a,\alpha_1,\dots,\alpha_l)$ define the same steady gradient K\"ahler-Ricci soliton as $(\frac{a}{c},c\alpha_1+d,\dots,c\alpha_l+d)$. It is then possible to normalize $\alpha_1=0,\alpha_2=1$ and it is shown that different choices of $(a,\alpha_3,\dots,\alpha_l)$ give non-isometric K\"ahler metrics.

\section{The locally flat metric}\label{section The locally flat metric}
Recall that in formula \eqref{equation of g}, we have $F_1(t)=\dots=F_{l-1}(t)=P(t)$, but $F_{l}(t) = P(t) - e^{2a(\alpha_l - t)} P(\alpha_l)$. If we replace $F_l(t)$ by $P(t)$, then we will get the following metric on $M^0$:
\begin{align}\label{equation of g prime}
    g^\p =& \sum_{j=1}^{l-1}(-1)^{l-j}p_{nc}(\alpha_j)\check{g}_j + \sum_{j=1}^l\frac{p_c(\xi_j)\Delta(\xi_j)}{P(\xi_j)}(d\xi_j)^2 \\
    &+ \sum_{j=1}^l\frac{P(\xi_j)}{p_c(\xi_j)\Delta(\xi_j)}\left(\sum_{r=1}^l\sigma_{r-1}(\hat{\xi}_j)\theta_r \right)^2\\
      =& \sum_{j=1}^{l-1}(-1)^{l-j}p_{nc}(\alpha_j)\check{g}_j + \sum_{j=1}^l\frac{\Delta(\xi_j)}{\prod_{k=1}^l(\xi_j-\alpha_k)}(d\xi_j)^2 \\
      &+ \sum_{j=1}^l\frac{\prod_{k=1}^l(\xi_j-\alpha_k)}{\Delta(\xi_j)}\left(\sum_{r=1}^l\sigma_{r-1}(\hat{\xi}_j)\theta_r \right)^2.
\end{align}
According to \cite[Proposition 17]{ACGI}, one can deduce that $g^\p$ defines a locally flat metric. The aim of this section is to provide another proof of this fact, which will reveal the global behavior of $g^\p$.

Define
\begin{align}
    \check{g}^\p &= \sum_{j=1}^{l-1}(-1)^{l-j}p_{nc}(\alpha_j)\check{g}_j,\\
    g^\p_\xi &= \sum_{j=1}^l\frac{\Delta(\xi_j)}{\prod_{k=1}^l(\xi_j-\alpha_k)}(d\xi_j)^2,\\
    g^\p_\theta &= \sum_{j=1}^l\frac{\prod_{k=1}^l(\xi_j-\alpha_k)}{\Delta(\xi_j)}\left(\sum_{r=1}^l\sigma_{r-1}(\hat{\xi}_j)\theta_r \right)^2,
\end{align}
then $g^\p = \check{g}^\p + g^\p_\xi + g^\p_\theta$. The strategy of the proof is simply a change of variable. We will change the coordinates from $(\xi_1,\dots,\xi_l)$ to $(\sigma_1, p_{nc}(\alpha_1),\dots,p_{nc}(\alpha_{l-1}))$.

\subsection{Change of variable for $g^\p_{\theta}$}
Define
\begin{align}
\beta_0 = \sum_{r=1}^l\sigma_{r-1}(\alpha_1,\dots,\alpha_{l-1})\theta_r,
\end{align}
Here $\sigma_{r-1}(\alpha_1,\dots,\alpha_{l-1})$ is the $(r-1)$-th elementary symmetric polynomial of $\alpha_1,\dots,\alpha_{l-1}$. Recall that we have already used $\sigma_1,\dots,\sigma_r$ to denote the elementary symmetric polynomials of $\xi_1,\dots,\xi_r$, when the variables are not $\xi_i$, we will indicate them explicitly.
\begin{lemma}\label{lemma d beta_0}
    We have $d\beta_0 = 0$.
\end{lemma}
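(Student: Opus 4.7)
The plan is to expand $d\beta_0$ using the explicit formula \eqref{equation d theta} for $d\theta_r$ and then show that the resulting sum over $r$ collapses thanks to an elementary symmetric polynomial identity.

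Concretely, substituting \eqref{equation d theta} into the definition of $\beta_0$ gives
\begin{align}
d\beta_0 = \sum_{r=1}^{l} \sigma_{r-1}(\alpha_1,\dots,\alpha_{l-1}) \sum_{j=1}^{l-1} (-1)^{l-j+r} \frac{2}{\prod_{k\neq j}(\alpha_j-\alpha_k)} \alpha_j^{l-r} \check{\omega}_j^0.
\end{align}
Swapping the order of summation, the coefficient of $\check{\omega}_j^0$ is a scalar multiple of
\begin{align}
S_j := \sum_{r=1}^{l} (-1)^r \sigma_{r-1}(\alpha_1,\dots,\alpha_{l-1}) \alpha_j^{l-r}.
\end{align}

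The key observation is that $S_j$ is, up to sign, the evaluation at $t=\alpha_j$ of the polynomial $\prod_{k=1}^{l-1}(t-\alpha_k) = \sum_{s=0}^{l-1}(-1)^s \sigma_s(\alpha_1,\dots,\alpha_{l-1}) t^{l-1-s}$. Indeed, reindexing by $s = r-1$ yields $S_j = -\prod_{k=1}^{l-1}(\alpha_j - \alpha_k)$. For each $j \in \{1,\dots,l-1\}$ the factor $(\alpha_j - \alpha_j)$ makes this product vanish, so the coefficient of every $\check{\omega}_j^0$ is zero and $d\beta_0 = 0$.

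There is no real obstacle here: the identity is a direct consequence of Vieta's formulas combined with the fact that each $\alpha_j$ (for $j = 1,\dots,l-1$) is a root of $\prod_{k=1}^{l-1}(t-\alpha_k)$. The only thing to watch is bookkeeping of signs and of the shift of index between $\sigma_{r-1}$ and the polynomial expansion; once this is done correctly the lemma falls out immediately.
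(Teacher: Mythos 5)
Your proof is correct and follows essentially the same route as the paper: expand $d\beta_0$ via \eqref{equation d theta} and observe that the coefficient of each $\check{\omega}_j^0$ is (up to sign and a nonzero constant) the polynomial $\prod_{k=1}^{l-1}(t-\alpha_k)$ evaluated at $t=\alpha_j$, hence zero; the paper phrases the same Vieta identity with the polynomial $\prod_{k=1}^{l-1}(t+\alpha_k)$ and roots $-\alpha_k$, which is the same computation with the sign absorbed differently. No gaps.
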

\begin{proof}
    Note that
    \begin{align}
        \sum_{r=1}^l\sigma_{r-1}(\alpha_1,\dots,\alpha_{l-1})t^{l-r}=\prod_{j=1}^{l-1}(t + \alpha_j)
    \end{align}
    has roots $-\alpha_1,\dots,-\alpha_{l-1}$. Combining this with equation \eqref{equation d theta} will finish the proof.
\end{proof}
For $i=1,\dots,l-1$, define
\begin{align}
    \beta_{i} = \sum_{r=2}^l\sigma_{r-2}(\alpha_1,\dots,\hat{\alpha}_i,\dots \alpha_{l-1})\theta_r.
\end{align}
Where $\sigma_{r-2}(\alpha_1,\dots,\hat{\alpha}_i,\dots \alpha_{l-1})$ denotes the $(r-2)$-th elementary polynomial of $\{\alpha_k|1\leq k \leq l-1, k\neq i\}$.
\begin{lemma}\label{lemma d beta_i}
    For $i=1,\dots,l-1$, we have $d\beta_i = (-1)^{l-i}2\check{\omega}_i^0$.
\end{lemma}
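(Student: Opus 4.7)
My plan is to mimic the proof of Lemma \ref{lemma d beta_0}: apply \eqref{equation d theta} termwise to $d\beta_i=\sum_{r=2}^{l}\sigma_{r-2}(\alpha_1,\dots,\hat\alpha_i,\dots,\alpha_{l-1})\,d\theta_r$, interchange the order of summation, and recognize the inner sum as the expansion of a polynomial in terms of its elementary symmetric functions. Concretely, after the swap I should arrive at
\[
d\beta_i = \sum_{j=1}^{l-1}\frac{2(-1)^{l-j}\check{\omega}_j^0}{\prod_{k\neq j}(\alpha_j-\alpha_k)}\left(\sum_{r=2}^{l}(-1)^{r}\sigma_{r-2}(\alpha_1,\dots,\hat\alpha_i,\dots,\alpha_{l-1})\,\alpha_j^{\,l-r}\right).
\]

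Reindexing the inner sum by $s=r-2$ turns it into $\sum_{s=0}^{l-2}(-1)^{s}\sigma_s(\alpha_1,\dots,\hat\alpha_i,\dots,\alpha_{l-1})\,\alpha_j^{\,l-2-s}$, which is precisely the evaluation at $t=\alpha_j$ of the degree $l-2$ polynomial $\prod_{k=1,k\neq i}^{l-1}(t-\alpha_k)$. Hence the inner factor equals $\prod_{k\neq i}(\alpha_j-\alpha_k)$. For $j\in\{1,\dots,l-1\}\setminus\{i\}$ this product contains the factor $(\alpha_j-\alpha_j)=0$, so the entire $j$-term vanishes; only $j=i$ survives, and for $j=i$ the numerator $\prod_{k\neq i}(\alpha_i-\alpha_k)$ is identical to the denominator $\prod_{k\neq i}(\alpha_i-\alpha_k)$ coming from \eqref{equation d theta}. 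After cancellation I will be left with $d\beta_i = 2(-1)^{l-i}\check\omega_i^0$.

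There is no real obstacle: $\beta_i$ has been engineered so that a Lagrange-interpolation-style identity collapses all but one contribution, exactly as in Lemma \ref{lemma d beta_0} (which is the analogous statement with $\prod_{k=1}^{l-1}(t+\alpha_k)$ replacing $\prod_{k\neq i}(t-\alpha_k)$). The only thing requiring care is the sign bookkeeping: the $(-1)^{l-j+r}$ from \eqref{equation d theta} combined with the $(-1)^s$ arising in the elementary symmetric polynomial expansion of $\prod_{k\neq i}(t-\alpha_k)$, which together produce the overall sign $(-1)^{l-i}$.
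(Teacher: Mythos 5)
Your computation is correct and is essentially the paper's own argument: the identity you obtain by evaluating $\prod_{k=1,k\neq i}^{l-1}(t-\alpha_k)$ at $t=\alpha_j$, namely $\sum_{r=2}^{l}(-1)^{r}\sigma_{r-2}(\alpha_1,\dots,\hat\alpha_i,\dots,\alpha_{l-1})\alpha_j^{l-r}=\delta_{ij}\prod_{k\neq i}(\alpha_i-\alpha_k)$, is precisely the inverse-Vandermonde formula the paper quotes and then combines with \eqref{equation d theta}. The only difference is that you verify this identity directly by Lagrange-interpolation-style reasoning rather than citing it, and your sign bookkeeping correctly yields $(-1)^{l-i}$.
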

\begin{proof}
    One applies the formula of the inverse of the Vandermonde matrix. More precisely, consider the $(l-1)\times(l-1)$ vandermonde matrix $((-1)^r\alpha_j^{l-r})_{1\leq j\leq l-1, 2\leq r\leq l}$, here $j$ is the row number and $r$ is the colume number. Its inverse $(\lambda_{ri})_{2\leq r\leq r, 1\leq i \leq l-1}$ is given by
    \begin{align}
        \lambda_{ri} = \frac{\sigma_{r-2}(\alpha_1,\dots,\hat{\alpha}_i,\dots \alpha_{l-1})}{\Delta_i(\alpha_1,\dots,\alpha_{l-1})},
    \end{align}
    where $\Delta_i(\alpha_1,\dots,\alpha_{l-1}) = \prod_{k=1,k\neq i}^{l-1}(\alpha_i - \alpha_k)$. Once again, combining this with equation \eqref{equation d theta} will finish the proof.
\end{proof}
The main result of this subsection is the following.
\begin{proposition}\label{proposition g prime theta}
    We have
    \begin{align}\label{equation g prime theta}
        g^\p_{\theta} = \beta_0^2 + \sum_{j=1}^{l-1}\frac{-p_{nc}(\alpha_j)}{\Delta_j(\alpha_1,\dots,\alpha_{l-1})}\beta_j^2.
    \end{align}
\end{proposition}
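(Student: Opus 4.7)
The plan is to reformulate the statement as a bivariate polynomial identity and verify it using the residue theorem on $\mathbb{CP}^1$.

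First I would identify the $l$-dimensional space $V := \op{span}\{\theta_1,\dots,\theta_l\}$ with $\mathbb{R}[t]_{<l}$ via the linear isomorphism $\theta_r \mapsto (-1)^{r-1}t^{l-r}$. By Vieta's formulas, $\gamma_j := \sum_{r=1}^l \sigma_{r-1}(\hat{\xi}_j)\theta_r$ corresponds to $\prod_{k \neq j}(t - \xi_k)$, the form $\beta_0$ corresponds to $Q(t) := \prod_{k=1}^{l-1}(t - \alpha_k)$, and each $\beta_i$ (for $i = 1, \dots, l-1$) corresponds to $-Q(t)/(t - \alpha_i)$. Passing to symmetric tensor squares, the proposition reduces to proving the bivariate polynomial identity $L(t,s) = R(t,s)$, where
\begin{align*}
L(t,s) &= p_{nc}(t)\, p_{nc}(s)\sum_{j=1}^l\frac{Q(\xi_j)}{\Delta(\xi_j)(t-\xi_j)(s-\xi_j)},\\
R(t,s) &= Q(t)\,Q(s)\left[\,1 - \sum_{i=1}^{l-1}\frac{p_{nc}(\alpha_i)}{\Delta_i(\alpha_1,\dots,\alpha_{l-1})(t-\alpha_i)(s-\alpha_i)}\right].
\end{align*}
Both sides are symmetric in $(t,s)$ and have bidegree at most $(l-1, l-1)$.

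Next I would evaluate both sides in closed form by summing residues on $\mathbb{CP}^1$. For $L$, I would apply the sum-of-residues identity to the rational function $\frac{Q(z)}{p_{nc}(z)(t-z)(s-z)}$: the residues at $z = \xi_j$ reproduce the inner sum in $L$; the simple poles at $z = t$ and $z = s$ give the explicit terms $\frac{-Q(t)}{p_{nc}(t)(s-t)}$ and $\frac{-Q(s)}{p_{nc}(s)(t-s)}$; and the residue at infinity vanishes since the degree at infinity is $-3$. After clearing denominators this yields
\begin{align*}
L(t,s) = \frac{p_{nc}(t)\, Q(s) - Q(t)\, p_{nc}(s)}{t - s}.
\end{align*}
An analogous computation for the sum inside $R$, now applied to $\frac{p_{nc}(z)}{Q(z)(t-z)(s-z)}$, yields the same closed form. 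The crucial distinction is that this second rational function has degree $-1$ at infinity, so its residue at infinity equals $-1$; it is precisely this nonzero residue at infinity that supplies the leading ``$1$'' inside the bracket of $R$ and combines with the other terms to produce the same expression $\frac{p_{nc}(t)Q(s) - Q(t)p_{nc}(s)}{t - s}$. Hence $L = R$, proving the proposition.

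The main obstacle I anticipate is the sign and indexing bookkeeping, especially the accurate accounting of the residue at infinity in the second calculation: if that contribution is missed, the leading $Q(t)Q(s)$ summand of $R$ fails to appear and the two closed forms will not match. Once the polynomial identification and the two residue tables are set up correctly, the argument is nothing more than a pair of partial-fraction decompositions.
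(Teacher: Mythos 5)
Your argument is correct, and it takes a genuinely different route from the paper. You encode $\theta_r\mapsto(-1)^{r-1}t^{l-r}$, so that $\sum_r\sigma_{r-1}(\hat\xi_j)\theta_r$, $\beta_0$ and $\beta_i$ become $p_{nc}(t)/(t-\xi_j)$, $Q(t)=\prod_{k=1}^{l-1}(t-\alpha_k)$ and $\pm Q(t)/(t-\alpha_i)$, and you reduce the tensor identity to a symmetric bivariate polynomial identity (valid because the monomials $t^{l-r}$ are independent, so the induced map on symmetric squares is injective; note also that the relevant coefficient in $g^\p_\theta$ is $P(\xi_j)/(p_c(\xi_j)\Delta(\xi_j))=Q(\xi_j)/\Delta(\xi_j)$, as in the paper's proof — the $\prod_{k=1}^{l}$ in the displayed formula for $g^\p$ is a typo). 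Summing residues of $Q(z)/\bigl(p_{nc}(z)(t-z)(s-z)\bigr)$ and of $p_{nc}(z)/\bigl(Q(z)(t-z)(s-z)\bigr)$ — the second with its residue $-1$ at infinity, which indeed cancels the ``$1$'' coming from $\beta_0^2$ — gives the common closed form $\bigl(p_{nc}(t)Q(s)-Q(t)p_{nc}(s)\bigr)/(t-s)$ for both sides; I checked this reproduces the correct answer in the $l=2$ case. The paper instead evaluates both symmetric forms on the basis $Y_j=\sum_r(-1)^r\alpha_j^{l-r}K_r$ and verifies the entries case by case (off-diagonal, diagonal, mixed with $Y_l$, and the $(Y_l,Y_l)$ entry via differentiating a Lagrange identity in $\alpha_l$), using the Vandermonde identities \eqref{equation vandermonde identity basic}, \eqref{equation vandermonde identity complicated}, \eqref{equation vandermonde identity extended}; those identities are exactly the finite-residue parts of your two residue computations, so your proof packages the same combinatorics into one uniform statement and avoids the case analysis and the derivative trick, at the cost of being less explicit: the paper's computation also records the pairings $\beta_i(Y_j)$, $\beta_0(Y_j)$ and the identity \eqref{equation theta component of g prime}, which are reused later (e.g.\ in Proposition \ref{proposition length of T_j} and Lemma \ref{lemma singular region looks like a line}), whereas your route would leave those to be derived separately.
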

\begin{proof}
    We will show that, evaluated by a basis $Y_1,\dots,Y_l$ of the linear space generated by $K_1,\dots,K_l$, both sides of the above equation are the same. To simplify the notation, let the right-hand side be denoted by $g^\p_\beta$.

    For $j=1,\dots,l$, define
    \begin{align}
        Y_j = \sum_{r=1}^l(-1)^r\alpha_j^{l-r}K_r,
    \end{align}
    so $Y_1,\dots,Y_l$ generates the same space as $K_1,\dots,K_l$, which is the vertical direction of the $\mathbb{T}^l$-principal bundle $P$. In fact, we have $T_j = (-1)^{l-j}\frac{2(d_j+1)}{q(\alpha_j)}Y_j$.

    Recall that $K_1,\dots,K_r$ is the dual basis of $\theta_1,\dots,\theta_l$, then by the same argument as in the proof of Lemma \ref{lemma d beta_0}, we have
    \begin{align}
        \beta_0(Y_1) =\dots =\beta_0(Y_{l-1}) = 0,\\
        \beta_0(Y_l) = -\prod_{k=1}^{l-1}(\alpha_l-\alpha_k).
    \end{align}
    Similarly, for $1\leq i, j \leq l-1$, we have
    \begin{align}
        \beta_i(Y_j) = \delta_{ij}\Delta_i(\alpha_1,\dots,\alpha_{l-1}),\\
        \beta_i(Y_l) = \prod_{k=1,k\neq i}^{l-1}(\alpha_l - \alpha_k).
    \end{align}
    So, up to some nonzero constant coefficients, the dual basis of $(\beta_0,\beta_1,\dots,\beta_{l-1})$ is $(K_1,T_1,\dots,T_{l-1})$. For $i=1,\dots,l$, we also have
    \begin{align}\label{equation theta component of g prime}
        \left( \sum_{r=1}^l\sigma_{r-1}(\hat{\xi}_i)\theta_r \right)(Y_j) = \sum_{r=1}^l\sigma_{r-1}(\hat{\xi}_i)(-1)^r\alpha_j^{l-r} =
        -\prod_{k=1,k\neq i}^{l}(\alpha_j-\xi_k).
    \end{align}
    It follows that for $1\leq m,n \leq l$, we have
    \begin{align}
        g_\theta^\p(Y_m,Y_n) = \sum_{i=1}^l\frac{\prod_{k=1}^{l-1}(\xi_i - \alpha_k)}{\Delta(\xi_i)}\prod_{k=1,k\neq i}^{l}(\alpha_m-\xi_k)\prod_{k=1,k\neq i}^{l}(\alpha_n-\xi_k).
    \end{align}

    First we consider the case where $1\leq m,n \leq l-1$ and $m\neq n$, then we have
    \begin{align}
        g_\theta^\p(Y_m,Y_n) = p_{nc}(\alpha_m)p_{nc}(\alpha_n)\sum_{i=1}^l\frac{\prod_{k=1,k\neq m,n}^{l-1}(\xi_i - \alpha_k)}{\Delta(\xi_i)}.
    \end{align}
    Note that $\prod_{k=1,k\neq m,n}^{l-1}(\xi_i - \alpha_k)$ can be viewed as a polynomial in $\xi_i$ of degree $l-3$, so by the following Vandermonde identity for $s=1,\dots,l$,
    \begin{align}\label{equation vandermonde identity basic}
        \sum_{i=1}^l\frac{\xi_i^{l-s}}{\Delta(\xi_i)} = \delta_{s1},
    \end{align}
    we conclude that in this case $g_\theta^\p(Y_m,Y_n) = 0$. It is also clear that in this case we have $g_\beta^\p(Y_m,Y_n) = 0$, so $g_\theta^\p(Y_m,Y_n) = g_\beta^\p(Y_m,Y_n)$.

    Next, we consider the case $m=n$ and $1\leq m \leq l-1$. For the left-hand side we have
    \begin{align}
        g_\theta^\p(Y_m,Y_m) = p_{nc}(\alpha_m)^2\sum_{i=1}^l\frac{\prod_{k=1,k\neq m}^{l-1}(\xi_i - \alpha_k)}{\Delta(\xi_i)(\xi_i-\alpha_m)}.
    \end{align}
    So we are led to consider $\sum_{i=1}^l\frac{\xi_i^s}{\Delta(\xi_i)(\xi_i-\alpha)}$ for $0\leq s \leq l-2$. As a starting point, by Lagrange interpolation, we have
    \begin{align}
        -p_{nc}(\alpha)\sum_{i=1}^l\frac{1}{\Delta(\xi_i)(\xi_i-\alpha)} = 1,
    \end{align}
    here in the above we have a polynomial in $\alpha$ of degree at most $l-1$ which equals $1$ at points $\xi_1,\dots,\xi_l$. So we have
    \begin{align}
        \sum_{i=1}^l\frac{1}{\Delta(\xi_i)(\xi_i-\alpha)} = -\frac{1}{p_{nc}(\alpha)}.
    \end{align}
    Observe that $\frac{\xi_i^s}{\xi_i-\alpha} = \xi_i^{s-1} + \alpha\frac{\xi_i^{s-1}}{\xi_i-\alpha}$. So by induction and Vandermonde identity \eqref{equation vandermonde identity basic}, for $s = 0,\dots,l-1$, we have
    \begin{align}\label{equation vandermonde identity complicated}
        \sum_{i=1}^l\frac{\xi_i^s}{\Delta(\xi_i)(\xi_i-\alpha)} = -\frac{\alpha^s}{p_{nc}(\alpha)}.
    \end{align}
    Since
    \begin{align}
        \prod_{k=1,k\neq m}^{l-1}(\xi_i - \alpha_k) = \sum_{p=0}^{l-2}(-1)^p\sigma_p(\alpha_1,\dots,\hat{\alpha}_m,\dots,\alpha_{l-1})\xi_i^{l-2-p},
    \end{align}
    we deduce that
    \begin{align}
        g_\theta^\p(Y_m,Y_m) &= p_{nc}(\alpha_m)^2\sum_{i=1}^l\frac{\prod_{k=1,k\neq m}^{l-1}(\xi_i - \alpha_k)}{\Delta(\xi_i)(\xi_i-\alpha_m)}\\
        &= p_{nc}(\alpha_m)^2\sum_{p=0}^{l-2}(-1)^p\sigma_p(\alpha_1,\dots,\hat{\alpha}_m,\dots,\alpha_{l-1})\prod_{i=1}^l\frac{\xi_i^{l-2-p}}{\Delta(\xi_i)(\xi_i-\alpha_m)}\\
        &= -p_{nc}(\alpha_m)^2\sum_{p=0}^{l-2}(-1)^p\sigma_p(\alpha_1,\dots,\hat{\alpha}_m,\dots,\alpha_{l-1})\frac{\alpha_m^{l-2-p}}{p_{nc}(\alpha_m)}\\
        &= -p_{nc}(\alpha_m)\sum_{p=0}^{l-2}(-1)^p\sigma_p(\alpha_1,\dots,\hat{\alpha}_m,\dots,\alpha_{l-1})\alpha_m^{l-2-p}\\
        &= -p_{nc}(\alpha_m)\Delta_m(\alpha_1,\dots,\alpha_{l-1}).
    \end{align}
    At the same time, it is easy to verify that
    \begin{align}
        g_\beta^\p(Y_m,Y_m) = -p_{nc}(\alpha_m)\Delta_m(\alpha_1,\dots,\alpha_{l-1})
    \end{align}
    So in this case, we have $g_\theta^\p(Y_m,Y_m) = g_\beta^\p(Y_m,Y_m)$.

    Then for $1\leq m \leq l-1$, we compare $g_\theta^\p(Y_m,Y_l)$ and $g_\beta^\p(Y_m,Y_l)$. For $g_\theta^\p(Y_m,Y_l)$ we have
    \begin{align}
        g_\theta^\p(Y_m,Y_l) &= p_{nc}(\alpha_m)p_{nc}(\alpha_l)\sum_{i=1}^l\frac{\prod_{k=1,k\neq m}^{l-1}(\xi_i - \alpha_k)}{\Delta(\xi_i)(\xi_i - \alpha_l)}\\
        &= p_{nc}(\alpha_m)p_{nc}(\alpha_l)\sum_{p=0}^{l-2}(-1)^p\sigma_p(\alpha_1,\dots,\hat{\alpha}_m,\dots,\alpha_{l-1})\sum_{i=1}^l\frac{\xi_i^{l-2-p}}{\Delta(\xi_i)(\xi_i-\alpha_l)}\\
        &= -p_{nc}(\alpha_m)p_{nc}(\alpha_l)\sum_{p=0}^{l-2}(-1)^p\sigma_p(\alpha_1,\dots,\hat{\alpha}_m,\dots,\alpha_{l-1})\frac{\alpha_l^{l-2-p}}{p_{nc}(\alpha_l)}\\
        &= -p_{nc}(\alpha_m)\prod_{k=1,k\neq m}^{l-1}(\alpha_l-\alpha_k).
    \end{align}
    And it is easy to verify that
    \begin{align}
        g_\beta^\p(Y_m,Y_l) = -p_{nc}(\alpha_m)\prod_{k=1,k\neq m}^{l-1}(\alpha_l-\alpha_k).
    \end{align}
    So $g_\theta^\p(Y_m,Y_l) = g_\beta^\p(Y_m,Y_l)$.

    Finally, we compare $g_\theta^\p(Y_l,Y_l)$ and $g_\beta^\p(Y_l,Y_l)$. Taking the derivative with respect to $\alpha$ in equation \eqref{equation vandermonde identity complicated}, for $s = 0,1,\dots,l-1$ we get
    \begin{align}
        \sum_{i=1}^l\frac{\xi_i^s}{\Delta(\xi_i)(\xi_i - \alpha)^2} = \frac{p_{nc}^\p(\alpha)\alpha^s}{p_{nc}(\alpha)^2} - \frac{s\alpha^{s-1}}{p_{nc}(\alpha)}.
    \end{align}
    So we have
    \begin{align}
        g_\theta^\p(Y_l,Y_l)
        &= p_{nc}(\alpha_l)^2\sum_{i=1}^l\frac{\prod_{k=1}^{l-1}(\xi_i - \alpha_k)}{\Delta(\xi_i)(\xi_i - \alpha_l)^2}\\
        &= p_{nc}(\alpha_l)^2 \sum_{p=0}^{l-1}(-1)^p\sigma_p(\alpha_1,\dots,\alpha_{l-1}) \sum_{i=1}^l\frac{\xi_i^{l-1-p}}{\Delta(\xi_i)(\xi_i - \alpha_l)^2}\\
        &=  p_{nc}(\alpha_l)^2 \sum_{p=0}^{l-1}(-1)^p\sigma_p(\alpha_1,\dots,\alpha_{l-1}) \frac{p_{nc}^\p(\alpha_l)\alpha_l^{l-1-p} - p_{nc}(\alpha_l)(l-1-p)\alpha_l^{l-2-p}}{p_{nc}(\alpha_l)^2}\\
        &= p_{nc}^\p(\alpha_l)\prod_{k=1}^{l-1}(\alpha_l-\alpha_k) - p_{nc}(\alpha_l)\frac{d}{d\alpha_l}\prod_{k=1}^{l-1}(\alpha_l-\alpha_k)
    \end{align}
    It follows that
    \begin{align}
        \frac{g_\theta^\p(Y_l,Y_l)}{(\prod_{k=1}^{l-1}(\alpha_l-\alpha_k))^2}
        = \frac{d}{d\alpha_l}\left(\frac{p_{nc}(\alpha_l)}{\prod_{k=1}^{l-1}(\alpha_l-\alpha_k)} \right).
    \end{align}
    As for $g_\beta^\p(Y_l,Y_l)$, we have
    \begin{align}
        g_\beta^\p(Y_l,Y_l)
        &= \left(\prod_{k=1}^{l-1}(\alpha_l-\alpha_k)\right)^2 + \sum_{i=1}^{l-1}\frac{-p_{nc}(\alpha_i)}{\Delta_i(\alpha_1,\dots,\alpha_{l-1})}\prod_{k=1,k\neq i}^{l-1}(\alpha_l - \alpha_k)^2.
    \end{align}
    So we have
    \begin{align}
        \frac{g_\beta^\p(Y_l,Y_l)}{(\prod_{k=1}^{l-1}(\alpha_l-\alpha_k))^2}
        &= 1 + \sum_{i=1}^{l-1}\frac{-p_{nc}(\alpha_i)}{\Delta_i(\alpha_1,\dots,\alpha_{l-1})(\alpha_i-\alpha_l)^2}\\
        &= 1 + \frac{d}{d\alpha_l}\sum_{i=1}^{l-1}\frac{p_{nc}(\alpha_i)}{\Delta_i(\alpha_1,\dots,\alpha_{l-1})(\alpha_i-\alpha_l)}.
    \end{align}
    Combining the following extended Vandermonde identity
    \begin{align}\label{equation vandermonde identity extended}
        \sum_{j=1}^{l}\frac{\xi_j^{l-1+p}}{\Delta(\xi_j)} = h_p,
    \end{align}
    where $p\geq 0$ and $h_p$ is the $p-$th complete symmetric function of $\xi_1,\dots,\xi_l$, and a similar induction on equation \eqref{equation vandermonde identity complicated}, one obtains the following.
    \begin{align}\label{equation vandermonde identity complicated extended}
        \sum_{i=1}^l\frac{\xi_i^{l+p}}{\Delta(\xi_i)(\xi_i-\alpha)}
        = \sum_{k=0}^ph_{p-k}\alpha^k - \frac{\alpha^{l+p}}{p_{nc}(\alpha)}.
    \end{align}
    Applying \eqref{equation vandermonde identity complicated extended}, we have
    \begin{align}\label{equation beginning of p_nc alpha_l in terms of p_nc alpha_j}
        \sum_{i=1}^{l-1}\frac{p_{nc}(\alpha_i)}{\Delta_i(\alpha_1,\dots,\alpha_{l-1})(\alpha_i-\alpha_l)}
       &= \sum_{p=0}^l(-1)^p\sigma_p\sum_{i=1}^{l-1}\frac{\alpha_i^{l-p}}{\Delta_i(\alpha_1,\dots,\alpha_{l-1})(\alpha_l-\alpha_i)}\\
       &= \sum_{p=0}^l(-1)^p\sigma_p\frac{\alpha_l^{l-p}}{\prod_{k=1}^{l-1}(\alpha_l-\alpha_k)} + \sigma_1 - h_1(\alpha_1,\dots,\alpha_{l-1}) -\alpha_l\\
       &= \frac{p_{nc}(\alpha_l)}{\prod_{k=1}^{l-1}(\alpha_l-\alpha_k)} + \sigma_1 - h_1(\alpha_1,\dots,\alpha_{l-1}) -\alpha_l.\label{equation ending of p_nc alpha_l in terms of p_nc alpha_j}
    \end{align}
    Note that $\sigma_1$ and $h_1(\alpha_1,\dots,\alpha_{l-1})$ are independent of $\alpha_l$, so we get
    \begin{align}
        g_\beta^\p(Y_l,Y_l) = \frac{d}{d\alpha_l}\left(\frac{p_{nc}(\alpha_l)}{\prod_{k=1}^{l-1}(\alpha_l-\alpha_k)} \right) = g_\theta^\p(Y_l,Y_l),
    \end{align}
    which completes the proof.
\end{proof}
\begin{remark}
    Recall that $(\sigma_1\dots,\sigma_r)$ is the moment map of $(K_1,\dots,K_r)$, which is dual to $(\theta_1,\dots,\theta_r)$. Now we want to change to new variables $(\sigma_1,p_{nc}(\alpha_1),\dots,p_{nc}(\alpha_{l-1}))$, which is the moment map of $(K_1,T_1,\dots,T_{l-1})$ up to some coefficients. That is why we define $(\beta_0,\dots,\beta_{l-1})$ as the dual of $(K_1,T_1,\dots,T_{l-1})$ (up to some coefficients).
\end{remark}

\subsection{Change of variable for $g_{\xi}^\p$}
The main object of this subsection is to prove the following identity:
\begin{proposition}
    We have
    \begin{align}\label{proposition g prime xi}
        g_{\xi}^\p = (d\sigma_1)^2 + \sum_{i=1}^{l-1}\frac{-1}{\Delta_i(\alpha_1,\dots,\alpha_{l-1})p_{nc}(\alpha_i)}(d(p_{nc}(\alpha_i)))^2.
    \end{align}
\end{proposition}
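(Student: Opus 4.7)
The plan is to verify the claim as an equality of symmetric bilinear forms in $d\xi_1,\dots,d\xi_l$ by expanding the right-hand side in $\xi$-coordinates and comparing coefficients with those of $g^\p_\xi$. Since $\sigma_1 = \sum_j \xi_j$ and $p_{nc}(\alpha_i) = \prod_j(\alpha_i - \xi_j)$, one has
\[
d\sigma_1 = \sum_{j=1}^l d\xi_j, \qquad d(p_{nc}(\alpha_i)) = -p_{nc}(\alpha_i) \sum_{j=1}^l \frac{d\xi_j}{\alpha_i - \xi_j}.
\]
Substituting into the right-hand side and collecting the coefficients of $d\xi_j\, d\xi_k$, the proposition reduces (after noting $p_c(\xi_j)/P(\xi_j) = 1/\prod_{k=1}^{l-1}(\xi_j - \alpha_k)$) to the two algebraic identities
\[
\sum_{i=1}^{l-1}\frac{p_{nc}(\alpha_i)}{\Delta_i(\alpha_1,\dots,\alpha_{l-1})(\alpha_i-\xi_j)(\alpha_i-\xi_k)} = 1 \quad (j\neq k),
\]
\[
1 - \sum_{i=1}^{l-1}\frac{p_{nc}(\alpha_i)}{\Delta_i(\alpha_1,\dots,\alpha_{l-1})(\alpha_i-\xi_j)^2} = \frac{\Delta(\xi_j)}{\prod_{k=1}^{l-1}(\xi_j - \alpha_k)}.
\]

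Both identities stem from Lagrange interpolation at the $l-1$ distinct nodes $\alpha_1,\dots,\alpha_{l-1}$: for any polynomial $Q$ of degree at most $l-2$,
\[
\sum_{i=1}^{l-1} \frac{Q(\alpha_i)}{\Delta_i(\alpha_1,\dots,\alpha_{l-1})} = \text{leading coefficient of } Q.
\]
The off-diagonal identity is immediate, since for $j\neq k$ the rational function $p_{nc}(x)/\bigl((x-\xi_j)(x-\xi_k)\bigr)$ simplifies to the monic polynomial $\prod_{m\neq j,k}(x-\xi_m)$ of exact degree $l-2$, and its leading coefficient is $1$.

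The diagonal identity is the main technical point, because $p_{nc}(x)/(x-\xi_j)^2$ has a simple pole at $x=\xi_j$ and the naive Lagrange argument fails against a double pole. I would first isolate the singular part by writing
\[
\prod_{m\neq j}(x-\xi_m) = \Delta(\xi_j) + (x-\xi_j)\,R(x),
\]
with $R$ monic of degree $l-2$; dividing by $(x-\xi_j)$ gives $p_{nc}(\alpha_i)/(\alpha_i-\xi_j)^2 = \Delta(\xi_j)/(\alpha_i-\xi_j) + R(\alpha_i)$. After summing over $i$, Lagrange's formula handles the $R$-term (contributing $1$), while the $\Delta(\xi_j)$-term is evaluated via the partial fraction expansion
\[
\frac{1}{\prod_{i=1}^{l-1}(x-\alpha_i)} = \sum_{i=1}^{l-1}\frac{1}{\Delta_i(\alpha_1,\dots,\alpha_{l-1})(x-\alpha_i)}
\]
at $x=\xi_j$. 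Combining the two contributions yields the claimed right-hand side.

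The only real obstacle is thus this splitting of the singular part in the diagonal case; once that is done, everything is routine bookkeeping, parallel in spirit to the inductive use of the Vandermonde-type identities \eqref{equation vandermonde identity basic} and \eqref{equation vandermonde identity complicated} in the proof of Proposition \ref{proposition g prime theta}.
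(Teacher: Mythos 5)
Your argument is correct, but it runs in the opposite direction to the paper's. The paper changes variables the other way: it expands $g^\p_\xi$ in the $d\sigma_r$ via $d\xi_j = \frac{1}{\Delta(\xi_j)}\sum_{r}(-1)^r\xi_j^{l-r}d\sigma_r$, splits $\prod_{k=1}^{l-1}(\xi_j-\alpha_k)^{-1}$ by partial fractions, and evaluates the resulting matrix $G_{rs}$ with the Vandermonde-type identities \eqref{equation vandermonde identity basic} and \eqref{equation vandermonde identity complicated extended}, which are sums over the nodes $\xi_1,\dots,\xi_l$; it then recombines $\sum_r(-1)^r\alpha_i^{l-r}d\sigma_r = d(p_{nc}(\alpha_i))$. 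You instead expand the right-hand side in the $d\xi_j$ and reduce to two identities that are interpolation statements summed over the nodes $\alpha_1,\dots,\alpha_{l-1}$; your off-diagonal identity is immediate from the Lagrange leading-coefficient formula, and your splitting $\prod_{m\neq j}(x-\xi_m) = \Delta(\xi_j) + (x-\xi_j)R(x)$ correctly disposes of the double pole in the diagonal case (I checked both identities; they hold, e.g.\ directly for $l=2,3$). What each buys: the paper's route produces the $\sigma$-coordinate (moment-map) expression directly and runs in strict parallel with its proof of Proposition \ref{proposition g prime theta}, reusing the same toolkit including the complete-symmetric-function identity; your route is somewhat more elementary and self-contained, needing only Lagrange interpolation and the partial-fraction expansion at the $\alpha$'s, with your double-pole splitting playing the role that differentiation of \eqref{equation vandermonde identity complicated} plays in the paper's companion computation. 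One cosmetic caveat: your Lagrange formula should be read as giving the coefficient of $x^{l-2}$ (so zero when $\deg Q < l-2$), which is harmless here since you only apply it to monic polynomials of exact degree $l-2$.
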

\begin{proof}
    First, we make a change of variable from $\xi_1,\dots,\xi_l$ to $\sigma_1,\dots,\sigma_l$.

    Note that
    \begin{align}
        d\xi_j = \frac{1}{\Delta(\xi_j)}\sum_{r=1}^l(-1)^r\xi_j^{l-r}d\sigma_r.
    \end{align}
    Applying the above formula to $g^\p_{\xi}$, we have
    \begin{align}
        g^\p_{\xi} = \sum_{r,s = 1}^lG_{rs}d\sigma_rd\sigma_s,
    \end{align}
    where
    \begin{align}
        G_{rs} = (-1)^{r+s}\sum_{j=1}^l\frac{\xi_j^{2l-r-s}}{\Delta(\xi_j)\prod_{k=1}^{l-1}(\xi_j-\alpha_k)}.
    \end{align}
    For any $m\geq 1$ and $a_1,\dots, a_m\in \mathbb{R}$, we have the following.
    \begin{align}
        \frac{1}{\prod_{i=1}^ma_i} = (-1)^{m+1}\sum_{i=1}^m\frac{1}{\prod_{k=1,k\neq i}^m(a_i-a_k)}\frac{1}{a_i}.
    \end{align}
    Applying this to $m=l-1$, $a_i = \xi_j-\alpha_i$, we have
    \begin{align}
        \frac{1}{\prod_{k=1}^{l-1}(\xi_j-\alpha_i)} = \sum_{i=1}^{l-1}\frac{1}{\Delta_i(\alpha_1,\dots,\alpha_{l-1})(\xi_j-\alpha_i)}.
    \end{align}
    Thus, we get
    \begin{align}
        G_{rs} &= (-1)^{r+s}\sum_{i=1}^{l-1}\frac{1}{\Delta_i(\alpha_1,\dots,\alpha_{l-1})}\sum_{j=1}^l\frac{\xi_j^{l-r-s}}{\Delta(\xi_j)(\xi_j-\alpha_i)}\\
        &= (-1)^{r+s}\sum_{i=1}^{l-1}\frac{1}{\Delta_i(\alpha_1,\dots,\alpha_{l-1})}\left[\sum_{k=0}^{l-r-s}h_{l-r-s-k}\alpha_i^{k} - \frac{\alpha_i^{2l-r-s}}{p_{nc}(\alpha_i)} \right].
    \end{align}
    In the last step, we have used \eqref{equation vandermonde identity complicated extended} and the summation $\sum_{k=0}^{l-r-s}h_{l-r-s-k}\alpha_i^{k}$ is understood as $0$ if $l-r-s<0$. Now applying \eqref{equation vandermonde identity basic}, we obtain
    \begin{align}
        G_{11} = 1 + \sum_{i=1}^{l-1}\frac{-1}{\Delta_i(\alpha_1,\dots,\alpha_{l-1})}\frac{\alpha_i^{2l-2}}{p_{nc}(\alpha_i)},
    \end{align}
    and for $(r,s)\neq (1,1)$, we have
    \begin{align}
        G_{rs} = (-1)^{r+s}\sum_{i=1}^{l-1}\frac{-1}{\Delta_i(\alpha_1,\dots,\alpha_{l-1})}\frac{\alpha_i^{2l-r-s}}{p_{nc}(\alpha_i)}.
    \end{align}
    It follows that
    \begin{align}
        g^\p_{\xi} &= (d\sigma_1)^2 + \sum_{i=1}^{l-1}\frac{-1}{\Delta_i(\alpha_1,\dots,\alpha_{l-1})p_{nc}(\alpha_i)}\sum_{r,s=1}^l(-1)^{r+s}\alpha_i^{2l-r-s}d\sigma_rd\sigma_s\\
        &= (d\sigma_1)^2 + \sum_{i=1}^{l-1}\frac{-1}{\Delta_i(\alpha_1,\dots,\alpha_{l-1})p_{nc}(\alpha_i)}\left(\sum_{r=1}^l(-1)^r\alpha_i^{l-r}d\sigma_r \right)^2\\
        &= (d\sigma_1)^2 + \sum_{i=1}^{l-1}\frac{-1}{\Delta_i(\alpha_1,\dots,\alpha_{l-1})p_{nc}(\alpha_i)}(d(p_{nc}(\alpha_i)))^2.
    \end{align}
\end{proof}
\subsection{Locally flatness of $g^\p$}
Combining Proposition \ref{proposition g prime theta} and Proposition \ref{proposition g prime xi}, we conclude that
\begin{align*}
    g^\p =& \sum_{j=1}^{l-1}(-1)^{l-j}p_{nc}(\alpha_j)\check{g}_j + (d\sigma_1)^2 + \sum_{j=1}^{l-1}\frac{-1}{\Delta_j(\alpha_1,\dots,\alpha_{l-1})p_{nc}(\alpha_j)}(d(p_{nc}(\alpha_j)))^2 +\\
    &+\beta_0^2 + \sum_{j=1}^{l-1}\frac{-p_{nc}(\alpha_j)}{\Delta_j(\alpha_1,\dots,\alpha_{l-1})}\beta_j^2.
\end{align*}
Note that by Lemma \ref{lemma d beta_i}, we have $2\check{g}^0_j + \beta_j^2 = 4g_{\mathbb{S}_1^{2d_j+1}}$ for $j=1,\dots,l-1$, where $g_{\mathbb{S}_1^{2d_j+1}}$ stands for the standard round metric of the sphere of dimension $2d_j + 1$ and radius $1$. Hence we have
\begin{align*}
    g^\p =& (d\sigma_1)^2 + \beta_0^2 +\\
    &+\sum_{j=1}^{l-1}\left[\frac{-p_{nc}(\alpha_j)}{\Delta_j(\alpha_1,\dots,\alpha_{l-1})}(4g_{\mathbb{S}_1^{2d_j+1}}) + \frac{-1}{\Delta_j(\alpha_1,\dots,\alpha_{l-1})p_{nc}(\alpha_j)}(d(p_{nc}(\alpha_j)))^2 \right].
\end{align*}
For $j=1,\dots,l-1$, define
\begin{align}\label{equation definition of r_j}
    r_j = 2\sqrt{\frac{-p_{nc}(\alpha_j)}{\Delta_j(\alpha_1,\dots,\alpha_{l-1})}},
\end{align}
then
\begin{align}
    g^\p = (d\sigma_1)^2 + \beta_0^2 + \sum_{j=1}^{l-1}(r_j^2g_{\mathbb{S}_1^{2d_j+1}} + (dr_j)^2).
\end{align}
Since $\mathbb{S}_1^{2d_j+1}$ is the link of the flat cone $\mathbb{R}^{2d_j+2}$, we conclude that
\begin{align}\label{equation g prime is locally flat}
    g^\p = (d\sigma_1)^2 + \beta_0^2 + \sum_{j=1}^{l-1}g_{\mathbb{R}^{2d_j+2}}.
\end{align}
Recall that by Lemma \ref{lemma d beta_0}, the $1$-form $\beta_0$ is closed, so the above formula shows that $g^\p$ is indeed a locally flat metric.

As a by-product of the above proof, we have the following proposition.
\begin{proposition}\label{proposition length of T_j}
    There exists a positive constant $C>0$ depending only on $a,\alpha_1,\dots,\alpha_l$ such that for any $1\leq j \leq l-1$, we have
    \begin{align}
        g^\p(T_j,T_j) & = r_j^2,\\
        g^\p(T_l,T_l) &\leq C\left(1 + \sum_{j=1}^{l-1}r_j^2\right).
    \end{align}
\end{proposition}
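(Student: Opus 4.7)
The plan is to use the decomposition $g_\theta^\p = \beta_0^2 + \sum_{i=1}^{l-1}\frac{-p_{nc}(\alpha_i)}{\Delta_i(\alpha_1,\dots,\alpha_{l-1})}\beta_i^2$ from Proposition \ref{proposition g prime theta}, together with the relation $T_j = (-1)^{l-j}\frac{2(d_j+1)}{q(\alpha_j)} Y_j$ recalled in the proof of that proposition. Since $T_1,\dots,T_l$ are vertical for the $\mathbb{T}^l$-principal bundle $P$, the horizontal pieces $\check{g}^\p$ and $g_\xi^\p$ annihilate them, so $g^\p(T_j,T_j) = g_\theta^\p(T_j,T_j)$ and everything reduces to evaluating $\beta_0(T_j)$ and $\beta_i(T_j)$.

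For $1 \leq j \leq l-1$, I would first observe that $q(\alpha_j) = (d_j+1)\Delta_j(\alpha_1,\dots,\alpha_{l-1})$, which is precisely the formula recorded in Section \ref{section The steady gradient Kahler-Ricci soliton}. Hence $T_j = (-1)^{l-j}\frac{2}{\Delta_j(\alpha_1,\dots,\alpha_{l-1})}Y_j$, and the identity $g_\theta^\p(Y_j,Y_j) = -p_{nc}(\alpha_j)\Delta_j(\alpha_1,\dots,\alpha_{l-1})$ obtained as an intermediate step in the proof of Proposition \ref{proposition g prime theta} immediately yields $g^\p(T_j,T_j) = \frac{-4p_{nc}(\alpha_j)}{\Delta_j(\alpha_1,\dots,\alpha_{l-1})} = r_j^2$ by \eqref{equation definition of r_j}.

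For $j = l$, the same relation specializes to $T_l = \frac{2}{q(\alpha_l)} Y_l$ (with the convention $d_l = 0$). Pairing against $\theta_r$ gives $\theta_r(T_l) = \frac{2(-1)^r\alpha_l^{l-r}}{q(\alpha_l)}$. Substituting into the definitions of $\beta_0$ and $\beta_i$ and recognizing the resulting polynomial sums — these are exactly the Vandermonde identities used in Lemmas \ref{lemma d beta_0} and \ref{lemma d beta_i}, now evaluated at $t=\alpha_l$ — produces $\beta_0(T_l) = -\frac{2\prod_{k=1}^{l-1}(\alpha_l - \alpha_k)}{q(\alpha_l)}$ and $\beta_i(T_l) = \frac{2\prod_{k\neq i}^{l-1}(\alpha_l - \alpha_k)}{q(\alpha_l)}$. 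Plugging into the formula for $g_\theta^\p$ yields a closed-form expression of the shape $C_0 + \sum_{i=1}^{l-1} C_i r_i^2$ with $C_0, C_i > 0$ depending only on $a,\alpha_1,\dots,\alpha_l$, which gives the desired bound $g^\p(T_l,T_l) \leq C(1 + \sum_{i=1}^{l-1} r_i^2)$.

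The only point that demands care is ensuring $q(\alpha_l) \neq 0$, so that $T_l$ may legitimately be written in terms of $Y_l$. In the regime $a \geq 0$ covered by \cite{apostolov2023hamiltonian}, every summand in $q(\alpha_l) = \sum_{j=1}^{l-1}(2a(\alpha_l-\alpha_j)+(d_j+1))\prod_{k\neq j}^{l-1}(\alpha_l-\alpha_k)$ is positive because $\alpha_l > \alpha_j$ for $j \leq l-1$, so $q(\alpha_l) > 0$. Beyond this nuance, the proof is routine bookkeeping with the Vandermonde identities already developed in Section \ref{section The locally flat metric}, and I do not foresee any conceptual obstacle.
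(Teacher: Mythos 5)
Your argument is correct and is essentially the paper's own: the proposition is stated there as a ``by-product'' of the proof of Proposition \ref{proposition g prime theta}, namely reading off $g^\p(T_j,T_j)$ from the diagonal values $g_\theta^\p(Y_j,Y_j)$ (resp.\ the $\beta_0,\beta_i$ pairings with $Y_l$) via $T_j=(-1)^{l-j}\tfrac{2(d_j+1)}{q(\alpha_j)}Y_j$, exactly as you do, with $q(\alpha_j)=(d_j+1)\Delta_j(\alpha_1,\dots,\alpha_{l-1})$ giving $g^\p(T_j,T_j)=r_j^2$ and the $j=l$ case giving a positive combination $C_0+\sum_i C_i r_i^2$. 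Your side remark that $q(\alpha_l)>0$ when $a\geq 0$ is a correct and harmless addition, since the construction of $v_l$ already presupposes $q(\alpha_l)\neq 0$.
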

For simplicity, in this article we will use $C$ to denote a positive constant which may be different from lines.

\section{The asymptotic cone of the locally flat metric}\label{section The asymptotic cone of the locally flat metric}
The locally flat metric $g^\p$ defined on $M^0$ is not complete, so it is a little subtle to talk about its asymptotic cone. Instead, we first show that $(M^0,g^\p)$ can be identified as an open subset of a complete locally flat metric, and then determine the asymptotic cone of the complete metric.

Recalling equation \eqref{equation expand e_1 in v_j}, we have
\begin{align}\label{equation first recurrence}
    \frac{2\prod_{k=1}^{l-1}(\alpha_l-\alpha_k)}{q(\alpha_l)} e_1 =
    -v_l + \sum_{j=1}^{l-1}(-1)^{l-j}\frac{\prod_{k=1,k\neq j}^{l-1}(\alpha_l - \alpha_k)}{q(\alpha_l)}v_j.
\end{align}
Note that the coefficient of $e_1$ is strictly positive. For $j=1,\dots,l-1$, let $\tau_j$ be the coefficient of $v_j$ in the above formula,
\begin{align}
    \tau_j = (-1)^{l-j}\frac{\prod_{k=1,k\neq j}^{l-1}(\alpha_l - \alpha_k)}{q(\alpha_l)},
\end{align}
and define $\tau = (\tau_1,\dots,\tau_{l-1})\in \mathbb{R}^{l-1}$. Let $\mathbb{Z}^{l-1}\subset \mathbb{R}^{l-1}$ be the standard integer lattice in $\mathbb{R}^{l-1}$, then we can also view $\tau$ as an element in the $(l-1)$-dimensional torus $\mathbb{T}^{l-1} = \mathbb{R}^{l-1}/\mathbb{Z}^{l-1}$.

If we change the parameters from $(a, \alpha_1,\dots,\alpha_l)$ to $(\frac{a}{c},c\alpha_1+d,\dots, c\alpha_l+d)$, then we can verify that $\tau$ remains unchanged. Thus, $\tau$ is intrinsically associated with the isometry class of the steady gradient K\"ahler-Ricci soliton of the Taub-NUT type.

The geometric meaning of $\tau$ can be explained by the Poincar\'e recurrence. Consider the continuous flow generated by $e_1$ in the $l$-dimensional torus $\mathbb{T}^l = \mathbb{R}^l/\Gamma_v$. By \eqref{equation first recurrence}, the flow is transverse to the $(l-1)$-dimensional subtorus $\mathbb{T}^{l-1}_{v_1,\dots,v_{l-1}}$ spanned by $v_1,\dots,v_{l-1}$. Choose this subtorus as the Poincar\'e section and start the flow at $0$, then the first recurrence is $\sum_{j=1}^{l-1}\tau_jv_j$. Identify this subtorus with $\mathbb{T}^{l-1} = \mathbb{R}^{l-1}/\mathbb{Z}^{l-1}$, then the first recurrence map is the translation by $\tau$.

Let $\Lambda\subset \mathbb{T}^{l-1}$ be the closure of the subgroup generated by $\tau$, then we have
\begin{proposition}\label{proposition dimension of Lambda}
    The dimension of $\Lambda$ is given by
    \begin{align}
        \dim\Lambda &= \dim_\mathbb{Q}\op{Span}_\mathbb{Q}\{1,\tau_1,\dots,\tau_{l-1}\} - 1\\
        &= \dim_\mathbb{Q}\op{Span}_\mathbb{Q}\{(-1)^{l+1-j}\frac{q(\alpha_j)}{2(d_j + 1)}\frac{1}{\prod_{k=1,k\neq j}^l(\alpha_j-\alpha_k)}|j=1,\dots,l\} -1
    \end{align}
\end{proposition}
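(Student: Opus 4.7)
The plan is to split the proposition into two equalities: first, $\dim \Lambda = \dim_\mathbb{Q} \op{Span}_\mathbb{Q}\{1, \tau_1, \dots, \tau_{l-1}\} - 1$; and second, $\dim_\mathbb{Q} \op{Span}_\mathbb{Q}\{1, \tau_1, \dots, \tau_{l-1}\} = \dim_\mathbb{Q} \op{Span}_\mathbb{Q}\{c_1, \dots, c_l\}$, where $c_j$ denotes the coefficient of $v_j$ in \eqref{equation expand e_1 in v_j}. The first is a classical Pontryagin-duality / Kronecker statement about the closure of a cyclic subgroup of a torus; the second should reduce to a direct algebraic identification via \eqref{equation first recurrence}.

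For the first equality I would apply Pontryagin duality to $\mathbb{T}^{l-1} = \mathbb{R}^{l-1}/\mathbb{Z}^{l-1}$. Its character lattice is $\mathbb{Z}^{l-1}$, and a closed subgroup is the common kernel of the characters that vanish on it. Let $L = \{m \in \mathbb{Z}^{l-1} : m \cdot \tau \in \mathbb{Z}\}$ be the subgroup of characters killing $\tau$; equivalently they kill the cyclic subgroup generated by $\tau$, hence its closure $\Lambda$. Then $\Lambda$ is the annihilator of $L$ in $\mathbb{T}^{l-1}$, and $\dim \Lambda = (l-1) - \op{rank}(L)$. The injection $L \hookrightarrow \mathbb{Z}^l$ defined by $m \mapsto (-m\cdot \tau, m_1, \dots, m_{l-1})$ identifies $L$ with the $\mathbb{Z}$-module of integer linear relations among $1, \tau_1, \dots, \tau_{l-1}$, whose rank equals $l - \dim_\mathbb{Q} \op{Span}_\mathbb{Q}\{1, \tau_1, \dots, \tau_{l-1}\}$. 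Combining these yields the first equality.

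For the second equality, set $c_j = (-1)^{l+1-j}\frac{q(\alpha_j)}{2(d_j+1)}\frac{1}{\prod_{k=1, k\neq j}^l(\alpha_j - \alpha_k)}$ for $j=1,\dots,l$, which are the coefficients in \eqref{equation expand e_1 in v_j}. Rewriting \eqref{equation first recurrence} as
\[
e_1 = \frac{q(\alpha_l)}{2\prod_{k=1}^{l-1}(\alpha_l - \alpha_k)}\Bigl(-v_l + \sum_{j=1}^{l-1}\tau_j v_j\Bigr)
\]
and matching coefficients of $v_1, \dots, v_l$ with \eqref{equation expand e_1 in v_j} gives $c_l = -\frac{q(\alpha_l)}{2\prod_{k=1}^{l-1}(\alpha_l - \alpha_k)}$ and $\tau_j = -c_j/c_l$ for $j = 1, \dots, l-1$. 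Since $c_l \neq 0$ (it appears in the denominator of the definition of $\tau_j$), multiplication by $-c_l$ is a $\mathbb{Q}$-linear automorphism of $\mathbb{R}$ carrying $\{1, \tau_1, \dots, \tau_{l-1}\}$ bijectively to $\{-c_l, c_1, \dots, c_{l-1}\}$, whose $\mathbb{Q}$-span coincides with $\op{Span}_\mathbb{Q}\{c_1, \dots, c_l\}$. This yields the second equality.

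Neither step poses a real analytic obstacle; the main care is in the bookkeeping of the duality in step one and the sign/normalization conventions in step two. As a sanity check I would verify the extreme cases: all $\tau_j \in \mathbb{Q}$ should give $\dim \Lambda = 0$, while $\{1, \tau_1, \dots, \tau_{l-1}\}$ being $\mathbb{Q}$-linearly independent should give $\Lambda = \mathbb{T}^{l-1}$, both consistent with the classical Kronecker theorem.
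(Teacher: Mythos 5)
Your proof is correct, and it takes a somewhat different route from the paper's. The paper disposes of the first equality in one line: it invokes the classical Kronecker--Weyl criterion (the orbit of $\tau$ is dense in $\mathbb{T}^{l-1}$ if and only if $1,\tau_1,\dots,\tau_{l-1}$ are $\mathbb{Q}$-independent) and then reduces the general case to this one ``by Gauss elimination,'' i.e.\ by splitting off the rational relations to land in a lower-dimensional subtorus where the density criterion applies; the second equality is left implicit, as an immediate consequence of how $\tau$ was extracted from \eqref{equation expand e_1 in v_j}. You instead argue by Pontryagin duality: identifying the annihilator lattice $L=\{m\in\mathbb{Z}^{l-1}: m\cdot\tau\in\mathbb{Z}\}$ with the lattice of integer relations among $1,\tau_1,\dots,\tau_{l-1}$ and using $\dim\Lambda=(l-1)-\operatorname{rank}(L)$ gives the dimension formula directly, with no need to quote the density statement or perform the reduction, at the modest cost of invoking the bipolar/duality correspondence between closed subgroups of the torus and subgroups of its character lattice. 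Your treatment of the second equality (matching coefficients in \eqref{equation expand e_1 in v_j} and \eqref{equation first recurrence} to get $\tau_j=-c_j/c_l$ with $c_l\neq 0$, then noting that multiplication by the nonzero scalar $-c_l$ preserves $\mathbb{Q}$-span dimensions) is more explicit than anything in the paper and is a welcome addition; the only point worth stating slightly more carefully is that $c_l\neq 0$ because $q(\alpha_l)>0$ (each summand of $q(\alpha_l)$ is positive when $a\geq 0$), rather than merely because it appears in a denominator.
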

\begin{proof}
    It is well known that the orbit of $\tau$ is dense if and only if $1,\tau_1,\dots,\tau_{l-1}$ are $\mathbb{Q}$-independent. More generally, the result can be proved by this special case and Gauss elimination.
\end{proof}

Now we can state the first main result of this section.
\begin{proposition}\label{proposition isometric embedding of g prime}
    The Riemannian manifold $(M^0,g^\p)$ can be isometrically embedded into an open subset of $((\prod_{j=1}^{l-1}\mathbb{C}^{d_j+1}\times \mathbb{R})/\mathbb{Z})\times \mathbb{R}$ equipped with the standard Euclidean metric. Here, the $\mathbb{Z}$-action on $(\prod_{j=1}^{l-1}\mathbb{C}^{d_j+1}\times \mathbb{R})$ is defined as follows: On each factor $\mathbb{C}^{d_j+1}$ there is an action of $\mathbb{S}^1$ by rotation, so we have an $\mathbb{T}^{l-1} = \mathbb{R}^{l-1}/\mathbb{Z}^{l-1}$-action on $\prod_{j=1}^{l-1}\mathbb{C}^{d_j+1}$. Define the generator of the $\mathbb{Z}$-action as $\tau\in\mathbb{T}^{l-1}$ on $\prod_{j=1}^{l-1}\mathbb{C}^{d_j+1}$ and the translation by $ (-1)\frac{2}{q(\alpha_l)}{\prod_{k=1}^{l-1}(\alpha_l-\alpha_k)}$ on the $\mathbb{R}$ factor, then clearly this action is free and properly discontinuous. The image of the embedding of $(M^0,g^\p)$ is given by the following inequalities: $r_j>0$ for $j=1,\dots,l-1$ and $\frac{1}{4}\sum_{j=1}^{l-1}\frac{1}{\alpha_l-\alpha_j}r_j^2 > -\sigma$, where $r_j$ is the radius of the cone $\mathbb{C}^{d_j+1}$ and $\sigma$ is the coordinate of the last factor of $\mathbb{R}$.
\end{proposition}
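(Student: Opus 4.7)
The plan is to leverage the decomposition $g' = (d\sigma_1)^2 + \beta_0^2 + \sum_{j=1}^{l-1} g_{\mathbb{R}^{2d_j+2}}$ from \eqref{equation g prime is locally flat} to write down an explicit isometric map, and to interpret the failure of $\beta_0$ to be globally exact on $M^0$ as precisely the $\mathbb{Z}$-quotient appearing in the target. The $K_1$-direction does not close up into a compact circle inside a generic $\mathbb{T}^l$-fiber, so a global primitive $\psi$ of $\beta_0$ exists only on a covering of $M^0$, and the $\mathbb{Z}$-quotient absorbs this monodromy.

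First I construct a map $\widetilde{\Phi}: \widetilde{M^0} \to \prod_{j=1}^{l-1}\mathbb{C}^{d_j+1} \times \mathbb{R} \times \mathbb{R}$ from the universal cover. On the $j$-th factor I use the polar description $\mathbb{C}^{d_j+1}\setminus\{0\}\cong \mathbb{R}_{>0}\times\mathbb{S}^{2d_j+1}$ with the Hopf fibration $\mathbb{S}^{2d_j+1}\to\mathbb{CP}^{d_j}$: the radius is $r_j$ from \eqref{equation definition of r_j}, the base point comes from the projection $M^0\to\prod_k\mathbb{CP}^{d_k}$, and the Hopf angle is normalized so that $T_j$ (identified with the standard rotation $X_j$ on $\mathbb{C}^{d_j+1}$ as in Section \ref{section The steady gradient Kahler-Ricci soliton}) acts at unit speed. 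The first $\mathbb{R}$-factor carries a primitive $\psi$ of $\beta_0$ (closed by Lemma \ref{lemma d beta_0}, hence exact on the simply connected $\widetilde{M^0}$), and the second $\mathbb{R}$-factor carries $\sigma_1$. That $\widetilde{\Phi}^{*}g_{\mathrm{euc}} = g'$ then follows immediately from \eqref{equation g prime is locally flat} together with the polar-coordinate form $dr_j^2 + r_j^2\,g_{\mathbb{S}^{2d_j+1}_1}$ of the Euclidean metric on each $\mathbb{C}^{d_j+1}$.

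The core of the argument is showing that $\widetilde{\Phi}$ descends to $M^0$ with the prescribed $\mathbb{Z}$-action on the target. Since $X = \prod_k\mathbb{CP}^{d_k}$ is simply connected and the image of $\pi_2(X)\to\pi_1(\mathbb{T}^l)=\Gamma_v$ is exactly $\langle v_1,\dots,v_{l-1}\rangle$ (read from $d\theta=\sum\check\omega_j^0\otimes v_j$), the long exact sequence of the fibration yields $\pi_1(M^0)=\pi_1(P)=\mathbb{Z}$, generated by the $T_l$-orbit in a fiber. Traversing this loop once, the coordinates $r_j$, $\sigma_1$ and the $\mathbb{CP}^{d_k}$-base points are unchanged (the $T_l$-flow is vertical and independent of $\xi$); the primitive $\psi$ shifts by $\int_0^1 \beta_0(T_l)\,dt = \beta_0(T_l)$, which a direct dual-basis computation in the spirit of the proof of Proposition \ref{proposition g prime theta} evaluates to $-T$ with $T := \frac{2\prod_{k=1}^{l-1}(\alpha_l-\alpha_k)}{q(\alpha_l)}$; and the rotation in each $\mathbb{C}^{d_j+1}$ shifts by the $T_j$-coefficient of $T_l$ in the basis $(T_1,\dots,T_{l-1},K_1)$. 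Equation \eqref{equation first recurrence}, rewritten as $T_l = \sum_{j=1}^{l-1}\tau_j T_j - T\,K_1$, identifies those coefficients as $\tau_j$. Thus the monodromy around the generator of $\pi_1(M^0)$ is exactly the prescribed $\mathbb{Z}$-action, and $\widetilde{\Phi}$ descends to the required $\Phi:M^0\to ((\prod_{j=1}^{l-1}\mathbb{C}^{d_j+1}\times\mathbb{R})/\mathbb{Z})\times\mathbb{R}$.

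It remains to pin down the image and verify injectivity. The inequality $r_j>0$ is immediate from $\mathring{D}$ since $p_{nc}(\alpha_j)\neq 0$ there, and the remaining defining condition $\xi_l>\alpha_l$ (equivalently $p_{nc}(\alpha_l)<0$) translates into the stated inequality on $(r_1,\dots,r_{l-1},\sigma)$ by substituting $p_{nc}(\alpha_i) = -r_i^2\Delta_i(\alpha_1,\dots,\alpha_{l-1})/4$ into the identity \eqref{equation beginning of p_nc alpha_l in terms of p_nc alpha_j}--\eqref{equation ending of p_nc alpha_l in terms of p_nc alpha_j} relating $p_{nc}(\alpha_l)$ to $\sigma_1$ and the $p_{nc}(\alpha_i)$. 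Injectivity of $\Phi$ follows because $\widetilde{\Phi}$ is a local isometry between manifolds of equal dimension and the induced map $\pi_1(M^0)\to\pi_1(\mathrm{target})$ is an isomorphism $\mathbb{Z}\to\mathbb{Z}$. The main technical hurdle is the monodromy computation just described: one must track how the single generator $T_l$ of $\pi_1(M^0)$ translates simultaneously into rotations of each $\mathbb{C}^{d_j+1}$ and a translation of the $\mathbb{R}$-factor, and the key bridge is the change of basis from $(T_1,\dots,T_l)$ (adapted to $\Gamma_v$) to $(T_1,\dots,T_{l-1},K_1)$ (adapted to $\beta_0$), which is precisely what \eqref{equation first recurrence} encodes.
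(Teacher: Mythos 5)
Your overall strategy is essentially the paper's: both hinge on the flat decomposition \eqref{equation g prime is locally flat}, on the relation \eqref{equation K_1 in terms of T_j}/\eqref{equation first recurrence} rewritten as $T_l=\sum_{j=1}^{l-1}\tau_jT_j-\frac{2\prod_{k<l}(\alpha_l-\alpha_k)}{q(\alpha_l)}K_1$ to identify the $\mathbb{Z}$-action, and on \eqref{equation beginning of p_nc alpha_l in terms of p_nc alpha_j}--\eqref{equation ending of p_nc alpha_l in terms of p_nc alpha_j} to cut out the image; your monodromy values ($\tau_j$ in the angles, $\beta_0(T_l)=-\frac{2}{q(\alpha_l)}\prod_{k=1}^{l-1}(\alpha_l-\alpha_k)$ in the $\psi$-direction) are correct and match the statement. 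However, two steps as written do not hold. First, the claim $\pi_1(M^0)=\pi_1(P)=\mathbb{Z}$ generated by a $T_l$-orbit is false whenever some $d_j=0$ (which the setup allows, e.g.\ $l=2$, $d_1=0$, the Taub--NUT case): then $\mathbb{CP}^{d_j}$ is a point, $\pi_2(\mathbb{CP}^{d_j})=0$, and $v_j$ survives in $\pi_1(P)=\Gamma_v/\langle v_j : d_j\geq 1\rangle$, so the fundamental group has rank $1+\#\{j\leq l-1 : d_j=0\}$. The descent of $\widetilde{\Phi}$ still goes through, because the extra generators (the $T_j$-orbits with $d_j=0$) have trivial monodromy in the target ($\beta_0(T_j)=0$, $\sigma_1$ fixed, and a full $T_j$-rotation is the identity on $\mathbb{C}^{d_j+1}$), but your argument must enumerate and check all deck generators, not just $T_l$.

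Second, your injectivity argument is based on a false principle: a local isometry between manifolds of equal dimension that induces an isomorphism on $\pi_1$ need not be injective (one cannot invoke the covering-map criterion here, since $(M^0,g^\p)$ is incomplete, and an immersed strip winding once around a point in the plane gives a counterexample to the bare statement). Injectivity should instead be verified directly from the coordinate description: the map $\mathring{D}\ni(\xi_1,\dots,\xi_l)\mapsto(r_1,\dots,r_{l-1},\sigma)$ is a bijection onto the region $\{r_j>0,\ \frac14\sum_j\frac{r_j^2}{\alpha_l-\alpha_j}>-\sigma\}$ (again via \eqref{equation definition of r_j} and \eqref{equation beginning of p_nc alpha_l in terms of p_nc alpha_j}--\eqref{equation ending of p_nc alpha_l in terms of p_nc alpha_j}), and over each base point and each value of these coordinates the induced map of fibers is the isomorphism of $l$-tori sending the lattice $\Gamma_v$ to the deck lattice of the target, by exactly your change-of-basis computation; this also settles surjectivity onto the stated open set. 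With these two repairs the argument is complete and is, in substance, a more explicit covering-space rendering of the paper's proof, which phrases the same quotient as the requirement that $T_l$ generate a closed circle action extending the $\mathbb{T}^{l-1}$-action.
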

\begin{proof}
    On $\prod_{j=1}^{l-1}\mathbb{C}^{d_j+1}\times \mathbb{R} \times \mathbb{R}$ equipped with the standard Euclidean metric, denote by $T_j$ the generator of rotation on $\mathbb{C}^{d_j+1}$ for $j=1,\dots,l-1$ and $K_1$ the unit vector field in the first component of $\mathbb{R}$. Then we can define a new vector field $T_l$ using formula \eqref{equation K_1 in terms of T_j}. Consequently, we have
    \begin{align}
        T_l = (-1)\frac{2}{q(\alpha_l)}{\prod_{k=1}^{l-1}(\alpha_l-\alpha_k)}K_1 + \sum_{j=1}^{l-1}\tau_jv_j.
    \end{align}

    Let the $1$-form $\beta_0$ be the metric dual to $K_1$, then $\beta_0(K_1)=1$ and $\beta_0$ vanishes when restricted to $\prod_{j=1}^{l-1}\mathbb{C}^{d_j+1}$ and the last component of $\mathbb{R}$, moreover $d\beta_0 = 0$. It follows that formula \eqref{equation g prime is locally flat} defines the standard Euclidean metric on $\prod_{j=1}^{l-1}\mathbb{C}^{d_j+1}\times \mathbb{R} \times \mathbb{R}$, where we impose $\sigma = \sigma_1 - \sum_{k=1}^l\alpha_k$.

    Recall that $\mathbb{T}^{l-1}_{v_1,\dots,v_{l-1}}$ is the subtorus in $\mathbb{T}^l$ spanned by $v_1,\dots,v_{l-1}$ and can be identified with $\mathbb{T}^{l-1} = \mathbb{R}^{l-1}/\mathbb{Z}^{l-1}$ by sending $\sum_{i=1}^{l-1}\mu_iv_i$ to $\mu = (\mu_1,\dots,\mu_{l-1})$. So we have a generically free $\mathbb{T}^{l-1}_{v_1,\dots,v_{l-1}}$-action on $\prod_{j=1}^{l-1}\mathbb{C}^{d_j+1}\times \mathbb{R}$.

    However, we cannot embed $M^0$ into $\prod_{j=1}^{l-1}\mathbb{C}^{d_j+1}\times \mathbb{R} \times \mathbb{R}$ since there is no generically free $\mathbb{T}^l$-action on $\prod_{j=1}^{l-1}\mathbb{C}^{d_j+1}\times \mathbb{R} \times \mathbb{R}$. To extend the $\mathbb{T}^{l-1}_{v_1,\dots,v_{l-1}}$-action to a generically free $\mathbb{T}^l$-action, we need to require that $T_l$ generates a generically free $\mathbb{S}^1$-action. Thus, we have to take a further quotient by $\mathbb{Z}$ described in the statement of the proposition.

    Now we can view $(M^0,g^\p)$ as a subset of $((\prod_{j=1}^{l-1}\mathbb{C}^{d_j+1}\times \mathbb{R})/\mathbb{Z})\times \mathbb{R}$ and it remains to precisely determine this subset. Recall that in the definition of $M^0$, we have $(\xi_1,\dots,\xi_l)\in \mathring{D} = (-\infty,\alpha_1)\times(\alpha_1,\alpha_2)\times\dots\times(\alpha_{l-2},\alpha_{l-1})\times(\alpha_l,+\infty)$, which is equivalent to $(-1)^{l-j}p_{nc(\alpha_j)} >0$ for $j=1,\dots,l-1$ and $p_{nc}(\alpha_l) <0$. By \eqref{equation definition of r_j}, the first $l-1$ inequalities are equivalent to $r_j>0$ for $j=1,\dots,{l-1}$. For the last inequality, by equations \eqref{equation beginning of p_nc alpha_l in terms of p_nc alpha_j}-\eqref{equation ending of p_nc alpha_l in terms of p_nc alpha_j}, we have
    \begin{align}
        \frac{p_{nc}(\alpha_l)}{\Delta_l(\alpha_1,\dots,\alpha_l)} =
        \sum_{k=1}^l\alpha_k - \sigma_1 - \sum_{j=1}^{l-1}\frac{p_{nc}(\alpha_j)}{\Delta_j(\alpha_1,\dots,\alpha_l)}.
    \end{align}
    Thus, $p_{nc}(\alpha_l)<0$ is equivalent to
    \begin{align}
        \frac{1}{4}\sum_{j=1}^{l-1}\frac{1}{\alpha_l-\alpha_j}r_j^2 > -\sigma,
    \end{align}
    where $\sigma = \sigma_1 - \sum_{k=1}^l\alpha_k$.
\end{proof}
\begin{remark}
    The Riemannian manifold $((\prod_{j=1}^{l-1}\mathbb{C}^{d_j+1}\times \mathbb{R})/\mathbb{Z})\times \mathbb{R}$ is complete, according to the Hopf-Rinow theorem.
\end{remark}
\begin{remark}
    If we view $(M^0,g^\p)$ as a subset of $((\prod_{j=1}^{l-1}\mathbb{C}^{d_j+1}\times \mathbb{R})/\mathbb{Z})\times \mathbb{R}$, then for $j=1,\dots,l-1$, $\beta_j$ is the metric dual of $T_j$.
\end{remark}

Next, we determine the asymptotic cone of $((\prod_{j=1}^{l-1}\mathbb{C}^{d_j+1}\times \mathbb{R})/\mathbb{Z})\times \mathbb{R}$.
\begin{proposition}\label{proposition asymptotic cone of g prime}
    The asymptotic cone of $((\prod_{j=1}^{l-1}\mathbb{C}^{d_j+1}\times \mathbb{R})/\mathbb{Z})\times \mathbb{R}$ is unique and is $(\prod_{j=1}^{l-1}\mathbb{C}^{d_j+1}/\Lambda)\times \mathbb{R}$, where $\Lambda$ acts on $\prod_{j=1}^{l-1}\mathbb{C}^{d_j+1}$ as a subgroup of $\mathbb{T}^{l-1}$.
\end{proposition}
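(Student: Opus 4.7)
The plan is to reduce the problem to computing the asymptotic cone of $N = (\prod_{j=1}^{l-1}\mathbb{C}^{d_j+1}\times\mathbb{R})/\mathbb{Z}$, since the last $\mathbb{R}$ factor of the full space is an isometric direct product and will contribute a factor of $\mathbb{R}$ to any asymptotic cone. For $N$, I would work on the universal cover $\tilde N = \prod_{j=1}^{l-1}\mathbb{C}^{d_j+1}\times\mathbb{R}$, equipped with the standard flat metric, on which the deck group $\mathbb{Z}$ is generated by $g\colon (z,s)\mapsto(\tau\cdot z,\,s+T)$, where $T=-\frac{2}{q(\alpha_l)}\prod_{k=1}^{l-1}(\alpha_l-\alpha_k)$. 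The distance on $N$ is then $d_N(\bar x,\bar y)=\min_{k\in\mathbb{Z}}d_{\tilde N}(x,g^k y)$ for arbitrary lifts.

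Next, fix a basepoint $p\in N$ and study pointed Gromov--Hausdorff convergence of $(N,\epsilon_n d_N,p)$ as $\epsilon_n\to 0$. In the rescaled geometry, the $k$-th iterate $g^k$ shifts the $\mathbb{R}$-coordinate by $\epsilon_n kT$ (a quantity that can become small even when $k$ is large) and rotates the torus factor by $\tau^k$. The heart of the proof is the following density lemma: the set
\begin{equation*}
\{(\tau^k,\,\epsilon_n kT):k\in\mathbb{Z}\}\subset\Lambda\times\mathbb{R}
\end{equation*}
becomes $\eta_n$-dense in $\Lambda\times\mathbb{R}$ for some $\eta_n\to 0$. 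To prove this I would use a two-scale argument: given a target $(\lambda,t)\in\Lambda\times\mathbb{R}$ and errors $\delta,\eta>0$, choose a window length $W=W(\delta)$ such that $\{\tau^k\}_{k\in[M,M+W]}$ is $\delta$-dense in $\Lambda$ for every $M$ (this uses Weyl equidistribution applied to the translation action of $\tau$ on its closure $\Lambda$, which is the content of Proposition~\ref{proposition dimension of Lambda}); then center the window near $k\approx t/(\epsilon_n T)$, which forces $\epsilon_n TW\leq\eta$ for $\epsilon_n$ small enough.

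With the density lemma in hand, the identifications imposed by the $\mathbb{Z}$-action on the rescaled cover fill out, in the limit, the orbits of the full group $\Lambda\times\mathbb{R}$ acting by (rotation on $\prod_j\mathbb{C}^{d_j+1}$)$\,\times\,$(translation on the $\mathbb{R}$-factor). To turn this into actual Gromov--Hausdorff convergence I would build explicit almost-isometries $\Phi_n\colon B_{R/\epsilon_n}(p)\subset N\to (\prod_j\mathbb{C}^{d_j+1}/\Lambda)$ by picking a lift $\tilde x$ of $x$ in $\tilde N$ and forgetting the $\mathbb{R}$-coordinate followed by projection to the $\Lambda$-quotient; the density lemma then yields the $\eta_n$-approximation bounds on both distance distortion and image $\eta_n$-density. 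Since the construction is entirely intrinsic to the pair $(\tau,T)$ and does not depend on the sequence $\epsilon_n$, uniqueness of the limit is automatic, giving the asymptotic cone $(\prod_{j=1}^{l-1}\mathbb{C}^{d_j+1}/\Lambda)\times\mathbb{R}$.

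The main obstacle I expect is the density lemma together with the bookkeeping of error terms in the almost-isometry when $\dim\Lambda>0$: in that case $\prod_j\mathbb{C}^{d_j+1}/\Lambda$ is no longer a manifold, and one must be careful in matching the collapsing directions (both the $\mathbb{R}$-factor and the transverse directions to $\Lambda$-orbits on $\prod_j\mathbb{C}^{d_j+1}$) with the combined $\Lambda\times\mathbb{R}$ quotient, rather than treating them separately.
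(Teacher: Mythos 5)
Your proposal is correct and is essentially the paper's argument in different packaging: the paper collapses the closure of the $K_1$-orbits via the submetry $f\colon E\to E/\Lambda^+$ and bounds the diameters of $\Lambda^+$-orbits, while you split off the last $\mathbb{R}$-factor and work on the universal cover with the deck group, but both rest on the identical key estimate -- a window of $N(\delta)$ consecutive powers of $\tau$ is $\delta$-dense in the compact group $\Lambda$ while the corresponding shift in the $\mathbb{R}$-direction is negligible after rescaling -- and both obtain uniqueness because the scale threshold depends only on $\epsilon$. One small correction: that window density needs only the density of $\mathbb{Z}\tau$ in $\Lambda$ (true by the definition of $\Lambda$) together with compactness and translation-invariance of the group metric, not Weyl equidistribution, and it is not the content of Proposition \ref{proposition dimension of Lambda}, which merely computes $\dim\Lambda$.
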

\begin{proof}
    Consider the following map.
    \begin{align}
        f: ((\prod_{j=1}^{l-1}\mathbb{C}^{d_j+1}\times \mathbb{R})/\mathbb{Z})\times \mathbb{R} \rightarrow
        (\prod_{j=1}^{l-1}\mathbb{C}^{d_j+1}/\Lambda)\times \mathbb{R},
    \end{align}
    which is the quotient map of the equivalence relationship whose equivalence class are the closures of the $\mathbb{R}$-action generated by $K_1$. Equivalently, let $\Lambda^+$ be the subtorus in $\mathbb{T}^l$ spanned by $\Lambda$ and $e_1$, then it is a closed subtorus of dimension $\dim\Lambda +1$, and $f$ is the quotient map with respect to the action of $\Lambda^+$. Since $K_1$ is a Killing vector field, we can equip the target $(\prod_{j=1}^{l-1}\mathbb{C}^{d_j+1}/\Lambda)\times \mathbb{R}$ with the quotient metric and consequently $f$ is a submetry. In any open subset where $\Lambda^+$ acts freely, $f$ is simply a Riemannian submersion.

    Note that $\Lambda$ acts on $\prod_{j=1}^{l-1}\mathbb{C}^{d_j+1}$ in a way that preserves the link, so the target $(\prod_{j=1}^{l-1}\mathbb{C}^{d_j+1}/\Lambda)\times \mathbb{R}$ is a metric cone. For any $\lambda>0$, let $f_\lambda = \lambda f$. In the following, we will denote by $g_{Euc}$ the Euclidean metric on $((\prod_{j=1}^{l-1}\mathbb{C}^{d_j+1}\times \mathbb{R})/\mathbb{Z})\times \mathbb{R}$, then
    \begin{align}
        f_\lambda : (((\prod_{j=1}^{l-1}\mathbb{C}^{d_j+1}\times \mathbb{R})/\mathbb{Z})\times \mathbb{R}, \lambda^2g_{Euc}) \rightarrow (\prod_{j=1}^{l-1}\mathbb{C}^{d_j+1}/\Lambda)\times \mathbb{R}
    \end{align}
    is also a submetry.

    We claim that for any small $\epsilon > 0$, there exists $\lambda_0 > 0$ such that for any $0<\lambda <\lambda_0$, $f_\lambda$ is a pointed $\epsilon$ Gromov-Hausdorff approximation ($\epsilon$-GHA) of $(((\prod_{j=1}^{l-1}\mathbb{C}^{d_j+1}\times \mathbb{R})/\mathbb{Z})\times \mathbb{R}, \lambda^2g_{Euc})$. Here $\epsilon-$GHA means that we need to show the following two properties:
    \begin{itemize}
        \item ($\epsilon$-onto) $B_{\epsilon^{-1}}(0) \subset B_{\epsilon}(f_\lambda(B_{\epsilon^{-1}}(0)))$,
        \item ($\epsilon$-isometry) For any $x_1,x_2 \in B_{\epsilon^{-1}}(0) \subset (((\prod_{j=1}^{l-1}\mathbb{C}^{d_j+1}\times \mathbb{R})/\mathbb{Z})\times \mathbb{R}, \lambda^2g_{Euc})$, we have
    \begin{align}
        |d_{\lambda^2g_{Euc}}(x_1,x_2) - d(f_\lambda(x_1), f_\lambda(x_2) )| < \epsilon,
    \end{align}
    which is equivalent to
    \begin{align}
        \lambda|d_{g_{Euc}}(x_1,x_2) - d(f(x_1), f(x_2) )| < \epsilon.
    \end{align}
    \end{itemize}

    Since $f_\lambda$ is a submetry, we have $B_{\epsilon^{-1}}(0) = f_\lambda(B_{\epsilon^{-1}}(0))$. In particular, it is $\epsilon$-onto.

    Since $f_\lambda$ is a submetry, minimal geodesic in the base can be lifted. So it remains to show the following statement: There exists $C>0$ such that for any $\epsilon >0$, there exists $\lambda_0 >0$ such that for any $m \in (\prod_{j=1}^{l-1}\mathbb{C}^{d_j+1}/\Lambda)\times \mathbb{R}$ with $|m| = d(0,m) < \epsilon^{-1}$ and $0<\lambda<\lambda_0$, we have $\op{diam}_{\lambda^2g_{Euc}}(f_\lambda^{-1}(m)) < C\epsilon$.

    Equivalently, it suffices to show that there exists $C>0$ such that for any $\epsilon >0$, there exists $\lambda_0 >0$ such that for any $m \in (\prod_{j=1}^{l-1}\mathbb{C}^{d_j+1}/\Lambda)\times \mathbb{R}$ with $|m|< \lambda^{-1}\epsilon^{-1}$ and $0<\lambda<\lambda_0$, we have $\op{diam}_{g_{Euc}}(f^{-1}(m)) < C\lambda^{-1}\epsilon$.

    Let $x\in ((\prod_{j=1}^{l-1}\mathbb{C}^{d_j+1}\times \mathbb{R})/\mathbb{Z})\times \mathbb{R}$ such that $f(x) = m$, then $f^{-1}(m)$ is the $\Lambda^+$-orbit $\Lambda^+\cdot x$ of $m$. Since $K_1$ is of unit length, it suffices to show that $\op{diam}_{g_{Euc}}(\Lambda\cdot x) < C\lambda^{-1}\epsilon$. We will estimate $d_{g_{Euc}}(x,t\cdot x)$ for any $t\in \Lambda$.

    Since $\mathbb{Z}\tau$ is dense in $\Lambda$ and $\Lambda$ is compact, there exists a positive integer $N$ such that $d_{\mathbb{T}^{l-1}}(\{s\tau|s\in \mathbb{Z},|s|\leq N\}, t) < \epsilon^2$ for any $t \in \Lambda$. Let $\lambda_0 = \frac{\epsilon}{N}$, note that it depends only on $\epsilon$ and $\tau$. Then for any $t\in \Lambda$ and $0<\lambda <\lambda_0$, there exists $n_0\in \mathbb{Z}$ such that $|n_0|\leq N < \lambda^{-1}\epsilon$ and $d_{\mathbb{T}^{l-1}}(t,n_0\tau) < \epsilon^2$.

    Note that $|m|^2 = \sigma(m)^2 + \sum_{j=1}^{l-1}r_j(m)^2 = \sigma(x)^2 + \sum_{j=1}^{l-1}r_j(x)^2$, so by Proposition \ref{proposition length of T_j}, for $j=1,\dots,l-1$, we have $|T_j|_{g_{Euc}}\leq |m| < \lambda^{-1}\epsilon^{-1}$.

    Now we join $x$ and $t\cdot x$ by curves in two steps. First we join $x$ and $(n_0\tau)\cdot x$ by the flow along $K_1$, it has length $|n_0|<\lambda^{-1}\epsilon$. Next we join $(n_0\tau)\cdot x$ and $t\cdot x$ by the flow of $T_j$ for $j=1,\dots,l-1$, the length of the curve is bounded by $|m|d_{\mathbb{T}^{l-1}}(t,n_0\tau) < (l-1)\lambda^{-1}\epsilon^{-1}\epsilon^2 = (l-1)\lambda^{-1}\epsilon$. In conclusion, we have $\op{diam}_{g_{Euc}}(\Lambda\cdot x) < l\lambda^{-1}\epsilon$, which proves the claim.

    By the claim, we deduce that $(\prod_{j=1}^{l-1}\mathbb{C}^{d_j+1}/\Lambda)\times \mathbb{R}$ is an asymptotic cone of $((\prod_{j=1}^{l-1}\mathbb{C}^{d_j+1}\times \mathbb{R})/\mathbb{Z})\times \mathbb{R}$. Moreover, since the Gromov-Hausdorff distance is complete on the collection of all isometric classes of proper and complete pointed metric spaces, our claim implies that the asymptotic cone is unique.
\end{proof}
\begin{remark}
    If we view $M^0$ as an open subset of $((\prod_{j=1}^{l-1}\mathbb{C}^{d_j+1}\times \mathbb{R})/\mathbb{Z})\times \mathbb{R}$, then its image $f_\lambda(M^0)$ under $f_\lambda$ is given by inequalities $r_j>0$ for $j=1,\dots,l-1$ and $\frac{1}{4}\sum_{j=1}^{l-1}\frac{1}{\alpha_l-\alpha_j}r_j^2>-\lambda\sigma$. Thus, $f_\lambda|_{M^0}$ is also $\epsilon$-onto if we let $\lambda$ sufficiently small, hence an $\epsilon$-GHA. So roughly speaking, $(\prod_{j=1}^{l-1}\mathbb{C}^{d_j+1}/\Lambda)\times \mathbb{R}$ is an asymptotic cone of $(M^0,g^\p)$.
\end{remark}
For simplicity, denote by $(E, g_{Euc})$ the Riemannian manifold $((\prod_{j=1}^{l-1}\mathbb{C}^{d_j+1}\times \mathbb{R})/\mathbb{Z})\times \mathbb{R}$, and by $E/\Lambda^+$ its asymptotic cone $(\prod_{j=1}^{l-1}\mathbb{C}^{d_j+1}/\Lambda)\times \mathbb{R}$. So $f:E\rightarrow E/\Lambda^+$ is the quotient map and also a submetry. It is clear that the radius of $E/\Lambda^+$ is given by $\sqrt{\sum_{j=1}^{l-1}r_j^2 + \sigma^2}$. And by the proof of the previous proposition, we have the following.
\begin{proposition}\label{proposition rho prime is proportional to cone radius}
    Denote by $\rho^\prime(x)$ the distance function $d_{g_{Euc}}(0,x)$ of $E$, then outside a compact set, we have
    \begin{align}
        \sqrt{\sum_{j=1}^{l-1}r_j^2 + \sigma^2} \leq \rho^\p \leq C\sqrt{\sum_{j=1}^{l-1}r_j^2 + \sigma^2},
    \end{align}
    for some constant $C>0$.
\end{proposition}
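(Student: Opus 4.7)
My plan is to prove the two inequalities separately, each by a direct application of the structural facts established in the previous section. Throughout, I write $\rho_c := \sqrt{\sum_{j=1}^{l-1}r_j^2 + \sigma^2}$ for the putative cone radius, which by the discussion preceding the proposition is exactly the distance function from the apex on the asymptotic cone $E/\Lambda^+$.

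The lower bound follows at once from the submetry $f: E \to E/\Lambda^+$ constructed in the proof of Proposition \ref{proposition asymptotic cone of g prime}. Submetries are $1$-Lipschitz, so $d_{E/\Lambda^+}(f(0), f(x)) \leq d_{g_{Euc}}(0,x) = \rho^\p(x)$. Since $\Lambda \subset \mathbb{T}^{l-1}$ acts on $\prod_{j=1}^{l-1}\mathbb{C}^{d_j+1}$ by rotations fixing the origin, the radial Euclidean geodesic from $0$ to any $z \in \prod_{j=1}^{l-1}\mathbb{C}^{d_j+1}$ projects to a minimizing curve of length $|z|$ in the quotient, hence $d_{E/\Lambda^+}(f(0), f(x)) = \rho_c$, giving $\rho_c \leq \rho^\p(x)$ on all of $E$.

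For the upper bound, let $\pi: \prod_{j=1}^{l-1}\mathbb{C}^{d_j+1}\times \mathbb{R}\times \mathbb{R} \to E$ denote the $\mathbb{Z}$-quotient map, which is a Riemannian covering with respect to the standard Euclidean metric on the cover. According to Proposition \ref{proposition isometric embedding of g prime}, the generator of the deck group shifts the first $\mathbb{R}$-coordinate by the fixed nonzero constant $c_0 := -\frac{2}{q(\alpha_l)}\prod_{k=1}^{l-1}(\alpha_l-\alpha_k)$, while simultaneously rotating in $\mathbb{T}^{l-1}$ by $\tau$ (which leaves the radii $r_j$ unchanged). Thus every $x \in E$ admits a lift $\tilde{x} = (z, s, \sigma)$ with $|s| \leq |c_0|/2$, and since $\pi$ is a local isometry,
\begin{align}
\rho^\p(x) \;\leq\; d_{\op{cov}}(0, \tilde{x}) \;=\; \sqrt{|z|^2 + s^2 + \sigma^2} \;\leq\; \sqrt{\rho_c^2 + c_0^2/4}.
\end{align}
Outside the compact region $\{\rho_c \leq |c_0|\}$ the right-hand side is bounded by $\sqrt{2}\,\rho_c$, which yields the upper bound with $C = \sqrt{2}$. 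The only conceptual ingredient beyond the constructions of Section \ref{section The asymptotic cone of the locally flat metric} is the bounded step size of the deck action in the $K_1$-direction; no step poses a serious obstacle.
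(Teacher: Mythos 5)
Your proof is correct, and while the lower bound coincides with the paper's (the submetry $f$ is $1$-Lipschitz and the cone radius of $f(x)$ is exactly $\sqrt{\sum_j r_j^2+\sigma^2}$, the $\sigma$-factor splitting off isometrically), your upper bound takes a genuinely different route. The paper deduces $\rho^\p \leq C\sqrt{\sum_j r_j^2+\sigma^2}$ by citing the orbit-diameter estimate inside the proof of Proposition \ref{proposition asymptotic cone of g prime}: the $\Lambda^+$-orbits of $0$ and of $x$ have diameter much smaller than $\rho^\p(x)$, so the orbit distance $\sqrt{\sum_j r_j^2+\sigma^2}$ can differ from $\rho^\p(x)$ only by lower-order terms. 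You instead go up to the Euclidean universal cover $\prod_{j=1}^{l-1}\mathbb{C}^{d_j+1}\times\mathbb{R}\times\mathbb{R}$ of $E$ (the $\mathbb{Z}$-action is free and properly discontinuous precisely because the translation step $c_0=-\tfrac{2}{q(\alpha_l)}\prod_{k=1}^{l-1}(\alpha_l-\alpha_k)$ is nonzero, as noted after \eqref{equation first recurrence}), choose a lift in a fundamental domain $|s|\leq |c_0|/2$ (legitimate since the rotational part of the deck action preserves $r_j$ and $\sigma$), and use that the covering projection is distance-nonincreasing. This is more elementary and quantitatively sharper: it gives $\rho^\p\leq\sqrt{\rho_c^2+c_0^2/4}\leq \rho_c+|c_0|/2$ globally, hence an explicit multiplicative constant (your $\sqrt 2$, in fact $\sqrt 5/2$) outside the compact set $\{\rho_c\leq|c_0|\}$, and it does not lean on the $\epsilon$-dependent recurrence estimate for $\tau$ that the paper's argument imports. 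What the paper's approach buys is uniformity of method, since the same orbit-diameter estimate is what drives the Gromov--Hausdorff approximation itself; what yours buys is independence from that proposition and an additive, rather than merely multiplicative, comparison.
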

\begin{proof}
    Since $f$ is a submetry, we have $\sqrt{\sum_{j=1}^{l-1}r_j^2 + \sigma^2} \leq \rho^\p$. And by the proof of Proposition \ref{proposition asymptotic cone of g prime}, we know that the $\Lambda^+$-orbit of $x$ has a diameter much smaller than $\rho^\p(x)$, proving the other inequality.
\end{proof}

\section{The asymptotic cone of the K\"ahler-Ricci soliton}\label{section The asymptotic cone of the Kahler-Ricci soliton}
In this section we show that the asymptotic cone of $(\mathbb{C}^n,g)$ is $E/\Lambda^+$. To do this, we first need an estimation of the distance function of $g$. Note that it is known that $\xi_1,\dots,\xi_l$ extend to smooth functions on $\mathbb{C}^n$.
\begin{proposition}[{\cite[Lemma 5.8]{apostolov2023hamiltonian}}]\label{proposition rho is comparable to xi_l-xi_1}
    Denote by $\rho(x) = d_g(0,x)$ the distance function on $(\mathbb{C}^n,g)$. Then outside a compact set, we have
    \begin{align}
        \frac{1}{C}(\xi_l-\xi_1) \leq \rho \leq C(\xi_l-\xi_1),
    \end{align}
    for some constant $C>0$.
\end{proposition}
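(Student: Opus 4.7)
The plan is to prove the upper and lower bounds separately by direct estimates from the explicit form of $g$ in \eqref{equation of g}.

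For the lower bound $\xi_l-\xi_1\le C\rho + C$, the key is that $\xi_1$ and $\xi_l$ have uniformly bounded $g$-gradient on $M^0$. From the block-diagonal structure of \eqref{equation of g} one reads off
\begin{equation*}
|\nabla^g \xi_j|^2 = \frac{F_j(\xi_j)}{p_c(\xi_j)\Delta(\xi_j)}.
\end{equation*}
For $j=1$ we have $F_1=P$, and for $j=l$ we have $F_l(t)\sim P(t)$ as $t\to+\infty$ (the correction $e^{2a(\alpha_l-t)}P(\alpha_l)$ being bounded, and exponentially decaying when $a>0$). A degree count of $P$, $p_c$ and $\Delta(\xi_j)$, using that $\xi_2,\dots,\xi_{l-1}$ remain confined to bounded intervals while only $\xi_1$ and $\xi_l$ can diverge, shows that this quantity stays uniformly bounded as $|\xi_1|,\xi_l\to\infty$; indeed, if $|\xi_1|$ is very large relative to $\xi_l$, the extra factor $|\xi_l-\xi_1|$ in $\Delta(\xi_l)$ only decreases $|\nabla \xi_l|^2$, and symmetrically. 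Integrating along any curve $\gamma$ from a fixed basepoint $p_0$ to $x$ gives $|\xi_l(x)-\xi_l(p_0)|+|\xi_1(p_0)-\xi_1(x)|\le C\cdot\mathrm{length}(\gamma)$, and infimizing over $\gamma$ yields $\xi_l(x)-\xi_1(x)\le C\rho(x)+O(1)$.

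For the upper bound $\rho\le C(\xi_l-\xi_1)$, I construct an explicit piecewise smooth path from $p_0$ to $x$ in two stages. First, with $(\xi_1,\xi_l)$ held at their basepoint values, move along the compact factors $\prod\mathbb{CP}^{d_j}$, along the bounded intervals of $\xi_2,\dots,\xi_{l-1}$, and around the $\mathbb{T}^l$-fibre to match those coordinates of $x$; all metric coefficients are bounded over this compact region, so this stage contributes length $O(1)$. Next, move $(\xi_1,\xi_l)$ along a \emph{simultaneous} linear interpolation from $(\xi_1(p_0),\xi_l(p_0))$ to $(\xi_1(x),\xi_l(x))$. Writing $L=\xi_l(x)$ and $M=-\xi_1(x)$ (both assumed large), a direct asymptotic computation with $\xi_l(t)\sim Lt$, $\xi_1(t)\sim -Mt$ gives
\begin{equation*}
g_{ll}(t) = \frac{p_c(\xi_l)\Delta(\xi_l)}{F_l(\xi_l)} \sim \frac{L+M}{L}, \qquad g_{11}(t) = \frac{p_c(\xi_1)\Delta(\xi_1)}{P(\xi_1)} \sim \frac{L+M}{M},
\end{equation*}
so that the length element is $\sqrt{g_{11}M^2+g_{ll}L^2}\,dt\sim(L+M)\,dt$, yielding segment length $O(L+M)=O(\xi_l(x)-\xi_1(x))$.

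The main obstacle is the coupling between $\xi_1$ and $\xi_l$ in the coefficients of $g$. A naive \emph{sequential} motion (first move $\xi_l$ to its target, then move $\xi_1$) does not produce the desired estimate: in the second leg one of $|\xi_1|,\xi_l$ is still bounded while the other is large, which forces $p_c(\xi_j)\Delta(\xi_j)/F_j(\xi_j)$ to blow up in that leg and destroys the bound. The diagonal interpolation is precisely what keeps the ratio $|\xi_1|/\xi_l$ bounded along the path, making the coefficients behave like $(L+M)/L$ and $(L+M)/M$ and collapsing the integrand to $L+M$. A secondary subtlety is that the coefficients $(-1)^{l-j}p_{nc}(\alpha_j)$ of the $\check g_j$ factors grow linearly in $\xi_l$ and $|\xi_1|$, which is why the compact-direction motion must be carried out first, while $\xi_l$ and $|\xi_1|$ are still bounded, in order to contribute only $O(1)$. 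Combining the two bounds gives $\rho\asymp\xi_l-\xi_1$ outside a compact set.
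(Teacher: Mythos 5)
The paper offers no proof of this statement at all: it is quoted from \cite[Lemma 5.8]{apostolov2023hamiltonian} (the same lemma the paper invokes for the volume growth), so your argument is necessarily an independent route, and in outline it is correct. For the lower bound, your gradient formula is right, and the interlacing $\xi_1<\alpha_1<\xi_2<\dots<\xi_{l-1}<\alpha_{l-1}<\alpha_l<\xi_l$ in fact gives the clean \emph{global} bound
\begin{align*}
|\nabla^g\xi_1|^2=\frac{P(\xi_1)}{p_c(\xi_1)\Delta(\xi_1)}=\frac{\prod_{k=1}^{l-1}|\xi_1-\alpha_k|}{\prod_{i\neq 1}|\xi_1-\xi_i|}\le 1,
\end{align*}
since each factor $|\xi_1-\alpha_k|$ is dominated by $|\xi_1-\xi_{k+1}|$, and likewise $|\nabla^g\xi_l|^2\le P(\xi_l)/(p_c(\xi_l)\Delta(\xi_l))\le 1$ because $F_l\le P$ for $a\ge 0$; you should state the bound uniformly on all of $M^0$, not merely as $|\xi_1|,\xi_l\to\infty$, since you integrate it along a minimizing curve that passes through the bounded region (the comparison above does hold everywhere, including near the faces $\xi_j\to\alpha_j$, so this is only a gap in the write-up, not in the method). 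For the upper bound your two-stage path works, with two caveats: the stage-1 cost $O(1)$ is best justified by compactness (in $\mathbb{C}^n$) of the closure of the slice $\{\xi_1=\xi_1(p_0),\,\xi_l=\xi_l(p_0)\}$ rather than by "bounded metric coefficients", which in the $\xi$-coordinates actually blow up at the faces; and your stage-2 computation only covers the regime where both $-\xi_1(x)$ and $\xi_l(x)$ are large and comparable to the parameter, so the remaining regimes (one of them bounded, or an endpoint near a face $\xi_1=\alpha_1$ or $\xi_l=\alpha_l$) still need a word — there the coefficient behaves like $(\xi_l-\xi_1)/(\alpha_1-\xi_1)$ or $(\xi_l-\xi_1)/(\xi_l-\alpha_l)$, and the square root makes the face singularity integrable, again yielding length $O(\xi_l-\xi_1)$. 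Finally, your stated reason for rejecting the sequential path is not correct: moving $\xi_1=-s$ after $\xi_l$ has reached $L$, one has $g_{11}\sim(s+L)/(s+1)$ and
\begin{align*}
\int_0^M\sqrt{\frac{s+L}{s+1}}\,ds\ \lesssim\ \sqrt{LM}+M\ \le\ L+M,
\end{align*}
so the sequential path also gives the desired bound; this misdiagnosis is harmless, since the diagonal interpolation you actually use is fine, but the coupling you describe is not a genuine obstacle.
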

For simplicity, we will express this proposition by $\rho \sim (\xi_l-\xi_1)$. By this we mean that $\frac{\rho}{\xi_l-\xi_1}$ is bounded from above and below by positive constants outside a compact set. Or equivalently, it means that $\frac{1 + \rho}{1 + (\xi_l - \xi_1)}$ is bounded from above and below by positive constants everywhere. So, Proposition \ref{proposition rho prime is proportional to cone radius} implies that $\rho^\p \sim \sqrt{\sum_{j=1}^{l-1}r_j^2 + \sigma^2}$.

Since we want to prove that $g$ and $g^\p$ have the same asmptotic cone, we should compare them.
\begin{proposition}\label{proposition g-g prime bounded by xi_l}
    We have
    \begin{align}
        |g-g^\p|_g = O(\frac{1}{\xi_l^{n-1}})
    \end{align}
    as $\xi_l \rightarrow +\infty$.
\end{proposition}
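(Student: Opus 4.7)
The plan is a direct computation: write $g-g'$ as a $2$-tensor supported on the $j=l$ piece, compute its pointwise $g$-norm using the block structure of $g$, and conclude by an elementary asymptotic estimate as $\xi_l\to+\infty$.

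First, by inspecting \eqref{equation of g} and \eqref{equation of g prime}, $g$ and $g'$ agree outside the $j=l$ summands, so
\begin{align*}
g-g'=A\cdot(d\xi_l)^2+B\cdot\Omega_l^2,
\end{align*}
where $\Omega_l:=\sum_{r=1}^l\sigma_{r-1}(\hat\xi_l)\theta_r$, $A=p_c(\xi_l)\Delta(\xi_l)\bigl(\tfrac{1}{F_l(\xi_l)}-\tfrac{1}{P(\xi_l)}\bigr)$, and $B=(F_l(\xi_l)-P(\xi_l))/(p_c(\xi_l)\Delta(\xi_l))$. The identity $F_l(t)-P(t)=-e^{2a(\alpha_l-t)}P(\alpha_l)$ turns both coefficients into explicit functions of $\xi_l$ alone.

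Second, I would compute the pointwise $g$-norms of $d\xi_l$ and $\Omega_l$. Since the $\xi$-block of $g$ is diagonal in $d\xi_1,\dots,d\xi_l$, one reads off $|d\xi_l|_g^2=F_l(\xi_l)/(p_c(\xi_l)\Delta(\xi_l))$. For $|\Omega_l|_g^2$, observe that the $1$-forms $\Omega_j:=\sum_r\sigma_{r-1}(\hat\xi_j)\theta_r$ for $j=1,\dots,l$ form a basis of the $\mathbb{T}^l$-vertical cotangent space at every point of $M^0$ (a Vandermonde nonvanishing already exploited in Proposition \ref{proposition g prime theta}); consequently $g_\theta=\sum_j(F_j(\xi_j)/(p_c(\xi_j)\Delta(\xi_j)))\,\Omega_j^2$ is diagonal in the dual basis of $\{\Omega_j\}$, and passing to the induced metric on $1$-forms yields $|\Omega_l|_g^2=p_c(\xi_l)\Delta(\xi_l)/F_l(\xi_l)$. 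Combining with $|f\cdot\omega^2|_g=|f|\cdot|\omega|_g^2$ and the triangle inequality,
\begin{align*}
|g-g'|_g\le\frac{e^{2a(\alpha_l-\xi_l)}|P(\alpha_l)|}{P(\xi_l)}+\frac{e^{2a(\alpha_l-\xi_l)}|P(\alpha_l)|}{F_l(\xi_l)}.
\end{align*}
Notably this bound is explicit in $\xi_l$ and independent of $\xi_1,\dots,\xi_{l-1}$.

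Finally, since $P$ has degree $n-1$ with positive leading coefficient, $P(\xi_l)\sim\xi_l^{n-1}$ as $\xi_l\to+\infty$; moreover $F_l(\xi_l)/P(\xi_l)\to 1$ and $e^{2a(\alpha_l-\xi_l)}\le 1$ for $a\ge 0$, $\xi_l\ge\alpha_l$. Both summands are therefore $O(\xi_l^{-(n-1)})$, proving the claim. The only step requiring mild care is the identification $|\Omega_l|_g^2=p_c(\xi_l)\Delta(\xi_l)/F_l(\xi_l)$, which does not follow from naively reading off a coefficient (the $\Omega_j$'s are not \emph{tangentially} orthogonal a priori) but does follow cleanly once one diagonalizes $g_\theta$ in the basis dual to $\{\Omega_j\}$ as above. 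I do not anticipate any serious obstacle.
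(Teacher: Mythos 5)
Your argument is correct and is essentially the paper's own proof spelled out in full: the paper likewise observes that $g'$ arises from $g$ by replacing $F_l(\xi_l)$ with $P(\xi_l)$ and that the relative error is $\frac{F_l(\xi_l)}{P(\xi_l)}-1=-e^{2a(\alpha_l-\xi_l)}\frac{P(\alpha_l)}{P(\xi_l)}=O(\xi_l^{-(n-1)})$ since $a\geq 0$ and $\deg P=n-1$. Your explicit diagonalization of the $\xi$- and $\theta$-blocks (in the coframes $\{d\xi_j\}$ and $\{\Omega_j\}$) to evaluate $|g-g'|_g$, with the $p_c(\xi_l)\Delta(\xi_l)$ factors cancelling to give a bound depending on $\xi_l$ alone, is exactly the computation the paper leaves implicit.
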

\begin{proof}
    Comparing the difference between $g^\p$ and $g$, we find that $g^\p$ is obtained from $g$ by replacing $F_l(\xi_l)$ with $P(\xi_l)$. Now the result follows from
    \begin{align}
        \frac{F_l({\xi_l})}{P(\xi_l)} - 1 = e^{2a(\alpha_l-\xi_l)}\frac{P(\alpha_l)}{P(\xi_l)},
    \end{align}
    and the fact that $a\geq 0$ and $P(t)$ is a polynomial of degree $n-1$.
\end{proof}

For $0<\alpha<1$ and $c>0$, define $R_{\alpha,c} = \{x\in \mathbb{C}^n|\xi_l(x) > c(\xi_l(x) - \xi_1(x))^\alpha\}$, and $S_{\alpha,c}$ the complement of $R_{\alpha,c}$ in $\mathbb{C}^n$. Consequently, we have
\begin{proposition}\label{proposition g-g prime in regular region}
    In $R_{\alpha,c}$, we have
    \begin{align}
        |g-g^\p|_g = O(\frac{1}{\rho^{\alpha(n-1)}})
    \end{align}
    as $\rho \rightarrow +\infty$.
\end{proposition}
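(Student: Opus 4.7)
The plan is to combine the two previous propositions in the obvious way, with the region $R_{\alpha,c}$ being engineered precisely so that the bound of Proposition \ref{proposition g-g prime bounded by xi_l} translates into a bound in terms of $\rho$. Concretely, the proof is essentially a single substitution, so I do not expect any serious obstacle.

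First I would show that on $R_{\alpha,c}$, outside a compact set, one has $\xi_l \geq c^{\prime} \rho^{\alpha}$ for some constant $c^{\prime} > 0$. This uses the defining inequality $\xi_l > c(\xi_l - \xi_1)^{\alpha}$ of $R_{\alpha,c}$ together with Proposition \ref{proposition rho is comparable to xi_l-xi_1}, which gives $\xi_l - \xi_1 \sim \rho$ outside a compact set; raising to the power $\alpha$ and multiplying by $c$ yields the desired lower bound on $\xi_l$. As a side remark, this also shows that $\rho \to +\infty$ within $R_{\alpha,c}$ forces $\xi_l \to +\infty$, so the hypothesis of Proposition \ref{proposition g-g prime bounded by xi_l} is met.

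Second, I would apply Proposition \ref{proposition g-g prime bounded by xi_l} to get $|g - g^{\prime}|_g = O(\xi_l^{-(n-1)})$, and then substitute the lower bound $\xi_l \geq c^{\prime} \rho^{\alpha}$ to obtain
\begin{align}
|g - g^{\prime}|_g = O\!\left(\xi_l^{-(n-1)}\right) = O\!\left(\rho^{-\alpha(n-1)}\right),
\end{align}
which is the claimed estimate. The only point that requires any care is making sure the implicit constants are uniform on $R_{\alpha,c}$ and that the compact set one must discard in the two auxiliary propositions does not cause trouble; since both earlier statements hold outside a compact subset of $\mathbb{C}^n$, enlarging this compact set if necessary makes the estimate valid throughout $R_{\alpha,c}$ for $\rho$ sufficiently large. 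No nontrivial step is required beyond this composition of the earlier bounds.
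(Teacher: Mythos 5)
Your proof is correct and follows the same route as the paper, which simply cites Proposition \ref{proposition rho is comparable to xi_l-xi_1}, Proposition \ref{proposition g-g prime bounded by xi_l} and the definition of $R_{\alpha,c}$; your explicit intermediate bound $\xi_l \geq c^{\prime}\rho^{\alpha}$ outside a compact set is exactly the substitution the paper leaves implicit. No gaps.
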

\begin{proof}
    It is a consequence of Proposition \ref{proposition rho is comparable to xi_l-xi_1}, Proposition \ref{proposition g-g prime bounded by xi_l} and the definition of $R_{\alpha,c}$.
\end{proof}
Recall the definitions of $r_j^2$ and $\sigma$, since $\xi_1,\dots,\xi_l$ can be extended to smooth functions on $\mathbb{C}^n$, as are $r_j^2$ and $\sigma$.
\begin{proposition}\label{proposition rho is comparable to cone radius}
    As functions on $\mathbb{C}^n$, we have $\rho \sim \sqrt{\sum_{j=1}^{l-1}r_j^2 + \sigma^2}$.
\end{proposition}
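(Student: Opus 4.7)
The plan is to reduce to the algebraic claim that $\xi_l - \xi_1 \sim \sqrt{\sum_{j=1}^{l-1} r_j^2 + \sigma^2}$, since $\rho \sim \xi_l - \xi_1$ by Proposition \ref{proposition rho is comparable to xi_l-xi_1}. I would introduce the shifted variables $u = \xi_l - \alpha_l \geq 0$ and $v = \alpha_1 - \xi_1 \geq 0$, so outside a compact set $u + v$ is large. Since $\xi_2,\dots,\xi_{l-1}$ stay in the bounded intervals $(\alpha_{i-1}, \alpha_i)$, we have $\xi_l - \xi_1 = u + v + (\alpha_l - \alpha_1)$ and $\sigma = \sigma_1 - \sum_{k=1}^l \alpha_k = (u - v) + \epsilon$, where $|\epsilon|$ is bounded by a constant depending only on the $\alpha_i$.

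For the upper bound, one directly expands
\begin{align*}
|p_{nc}(\alpha_j)| = (v + A_j)(u + B_j)\prod_{i=2}^{l-1}|\alpha_j - \xi_i|,
\end{align*}
with $A_j = \alpha_j - \alpha_1 \geq 0$ and $B_j = \alpha_l - \alpha_j > 0$. The middle product is uniformly bounded since the $\xi_i$ for $2 \leq i \leq l-1$ lie in bounded intervals, so $r_j^2 \leq C(u + v)^2$ and $\sigma^2 \leq C(u + v)^2$. Summing yields $\sum r_j^2 + \sigma^2 \leq C(\xi_l - \xi_1)^2$.

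The lower bound is the main obstacle, because individual $r_j^2$'s can degenerate (e.g.\ when some $\xi_i$ approaches a neighbouring $\alpha_k$), so one cannot bound $r_j^2$ individually from below by $uv$. The key is the Lagrange interpolation identity
\begin{align*}
\sum_{j=1}^{l-1}\frac{\prod_{i=2}^{l-1}(\alpha_j - \xi_i)}{\Delta_j(\alpha_1,\dots,\alpha_{l-1})} = 1,
\end{align*}
which follows because $\prod_{i=2}^{l-1}(t - \xi_i)$ is a monic polynomial of degree $l-2$ and Lagrange-interpolating it at the $l-1$ nodes $\alpha_1,\dots,\alpha_{l-1}$ is exact, so the sum above is the leading coefficient $1$. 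A sign check on $\mathring{D}$ (both numerator and denominator carry the sign $(-1)^{l-1-j}$, by counting the factors with $i > j$ respectively $k > j$) shows every summand is positive, so the same identity holds with absolute values. Combined with $(v+A_j)(u+B_j) \geq uv$, this gives $\sum_j r_j^2 \geq 4uv$.

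To finish, I split into cases on the balance between $u$ and $v$. If $|u - v| \geq (u+v)/3$, then for $u+v$ sufficiently large we have $|\sigma| \geq (u+v)/4$, so $\sigma^2 \geq (u+v)^2/16$. Otherwise $u, v > (u+v)/3$, hence $\sum r_j^2 \geq 4uv \geq 4(u+v)^2/9$. In either case $\sum r_j^2 + \sigma^2 \geq c(u+v)^2 \sim (\xi_l - \xi_1)^2 \sim \rho^2$, completing the proof.
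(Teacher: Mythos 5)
Your proof is correct, and for the nontrivial inequality $\rho \leq C(1+\sqrt{\sum_{j=1}^{l-1} r_j^2+\sigma^2})$ it takes a genuinely different route from the paper. The paper treats this direction geometrically: it flows $x$ along $JK_1$ to the slice $\{\sigma=0\}$ (a curve of $g$-length $|\sigma(x)|$), uses that $\{\sigma=0\}$ is totally geodesic for the flat metric and that $|g-g^\prime|_g$ decays there, and then controls $\rho$ by the length of a $g^\prime$-geodesic via Proposition \ref{proposition rho prime is proportional to cone radius}; only the easy bound $\sqrt{\sum r_j^2+\sigma^2}\leq C(1+\rho)$ is done algebraically. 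You instead prove the stronger pointwise comparison $\xi_l-\xi_1 \sim \sqrt{\sum r_j^2+\sigma^2}$ and then quote Proposition \ref{proposition rho is comparable to xi_l-xi_1} once. Your key step is sound: with $u=\xi_l-\alpha_l$, $v=\alpha_1-\xi_1$ one has $r_j^2=4(v+A_j)(u+B_j)w_j$ with $w_j=\prod_{i=2}^{l-1}(\alpha_j-\xi_i)/\Delta_j(\alpha_1,\dots,\alpha_{l-1})$, the sign count indeed gives $w_j\geq 0$ on the closure of $\mathring{D}$, and $\sum_j w_j=1$ because $\prod_{i=2}^{l-1}(t-\xi_i)$ is monic of degree $l-2$; this weighted-average identity is exactly what is needed, since individual $r_j$ can degenerate, and your dichotomy between $|u-v|\geq (u+v)/3$ and its complement correctly covers the regime where $uv$ is small but $|\sigma|$ is large. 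What your argument buys is a shorter, purely algebraic proof (no use of $g^\prime$, the flat model $E$, or the submetry), together with the slightly stronger statement that the cone radius is comparable to $\xi_l-\xi_1$; what the paper's route buys is that it avoids symmetric-function identities and recycles geometric ingredients that are needed anyway in the proof of Theorem \ref{theorem asymptotic cone of g}. Both arguments ultimately rest on the same external input from \cite[Lemma 5.8]{apostolov2023hamiltonian} through Proposition \ref{proposition rho is comparable to xi_l-xi_1}.
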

\begin{proof}
    By the definitions of $r_j^2$ and $\sigma$, we have
    \begin{align}
        |\sigma| &= |\sum_{j=1}^l\xi_j - \sum_{j=1}^l \alpha_l| \leq \xi_l - \xi_1 + C,\\
        r_j^2 &= C|p_{nc}(\alpha_j)| = C(\alpha_j-\xi_1)(\xi_l-\alpha_j) \leq C(\xi_l-\xi_1)^2.
    \end{align}
    Thus, $\sqrt{\sum_{j=1}^{l-1}r_j^2 + \sigma^2} \leq  C(1 + \rho)$.

    Conversely, for any $x\in \mathbb{C}^n$, we will find a piecewise smooth curve in $\mathbb{C}^n$ that joins $x$ and the origin. Recall that $\sigma_1$ is the moment map of the action generated by $K_1$, so we can use the flow of $JK_1$ to join $x$ with another point $x^\p$ such that $\sigma(x^\p) = 0$. By construction, the length $l_g(\gamma_1)$ measured by $g$ of this curve $\gamma_1$ is $|\sigma(x)|$. Recall that $g^\p(K_1,K_1) = 1$, comparing $g^\p$ and $g$, we have
    \begin{align}
        g(K_1,K_1) = 1 - \frac{P(\alpha_l)}{p_c(\xi_l)\Delta(\xi_l)} \geq
        1 - \frac{C}{\xi_l-\xi_1}.
    \end{align}
    It follows that there exists $\rho_0>0$ such that on $\{\rho > \rho_0\}$, we have $g(K_1,K_1) \geq \frac{1}{2}$.

    If $\gamma_1$ intersects with $\{\rho \leq \rho_0\}$, then clearly we have
    \begin{align}
    \rho(x) \leq |\sigma(x)| + \rho_0 \leq C\left(1 + \sqrt{\sum_{j=1}^{l-1}r_j^2(x) + \sigma^2(x)}\right).
    \end{align}

    If $\gamma_1$ is contained in $\{\rho > \rho_0\}$, then we know that $l_{g^\p}(\gamma_1)\leq 2|\sigma(x)|$. In this case, we consider the following.

    Inside $\{\sigma=0\}$, we have $-\xi_1 \sim \xi_l \sim \rho$, so we have $|g - g^\p|_g = O(\frac{1}{\rho^{(n-1)}})$ as $\rho\rightarrow +\infty$. Let $\gamma_2$ be the minimal geodesic with respect to $g^\p$ in $E$ that joins $x^\p$ and $0$, since $\{\sigma=0\}$ is a totally geodesic submanifold of $E$, $\gamma_2$ is contained in $\{\sigma=0\}$. We know that the length $l_{g^\p}(\gamma_2)$ of $\gamma_2$ measured by $g^\p$ is $\rho^\p(x^\p)$, so we have
    \begin{align}
        l_{g^\p}(\gamma_2) = \rho^\p(x^\p) \leq \rho^\p(x) + 2|\sigma(x)|\leq
        C\left(1 + \sqrt{\sum_{j=1}^{l-1}r_j^2(x) + \sigma(x)^2}\right).
    \end{align}
    In the last inequality, we have used Proposition \ref{proposition rho prime is proportional to cone radius}.

    Since $|g - g^\p|_g = O(\frac{1}{\rho^{(n-1)}})$ as $\rho\rightarrow +\infty$ inside $\{\sigma=0\}$, there exists $\rho_1 >0$ such that in $\{\rho\geq \rho_1\}\cap\{\sigma=0\}$, we have $g \leq 2g^\p$.

    Write $\gamma_2:[0,T] \rightarrow \mathbb{C}^n$, then $T =l_{g^\p}(\gamma_2) = \rho^\p(x^\p)$. Let $t_0\in [0,T]$ so that for any $0\leq t<t_0$, we have $\rho(\gamma_2(t)) > \rho_1$ and $\rho(\gamma_2(t_0))=\rho_1$. Then
    \begin{align}
        \rho(x) &\leq l_g(\gamma_1) + l_g(\gamma_2|_{[0,t_0]}) + \rho_1\\
        &\leq l_g(\gamma_1) + 2l_{g^\p}(\gamma_2) + \rho_1\\
        &\leq C\left(1 + \sqrt{\sum_{j=1}^{l-1}r_j^2(x) + \sigma(x)^2}\right).
    \end{align}

    Thus, we have shown that in any case we always have $\sqrt{\sum_{j=1}^{l-1}r_j^2 + \sigma^2} \leq  C(1 + \rho)$ and $\rho \leq C\left(1 + \sqrt{\sum_{j=1}^{l-1}r_j^2 + \sigma^2}\right)$, which finish the proof.
\end{proof}

Observe that we have two different metrics $g$ and $g^\p$ on $M^0$. When equipped with $g$, we know that $M^0$ is dense in $\mathbb{C}^n$. When equipped with $g^\p$, we know that $M^0$ can be isometrically embedded into an open subset of $E$. However, we cannot extend this embedding to a smooth map from $\mathbb{C}^n$ to $E$. Instead, we will fix an extension of this embedding that is not continuous as follows.

For any $x\in \mathbb{C}^n$, if $x\in M^0$, we define $\iota(x)$ as the image of $x$ under the embedding of $M^0$ into $E$. For any $x\in \mathbb{C}^n\setminus M^0$, fix a sequence $x_j$ in $M^0$ that converges to $x$, and define $\iota(x) = \lim_{j\rightarrow +\infty}\iota(x_j)$. Here, since $\rho(x_j)$ is bounded, by Proposition \ref{proposition rho is comparable to cone radius} and Proposition \ref{proposition rho prime is proportional to cone radius}, we know that $\rho^\p(\iota(x_j))$ is also bounded, so by passing to a subsequence, we may assume that $\iota(x_j)$ converges.

Moreover, we require $\iota(0) = 0$. The idea is that any point $x$ with $\iota(x) =0$ must have $\xi_j(x) = \alpha_j$ for $j=1,\dots,l$ by the definition of $r_j^2$ and $\sigma$. From the toric point of view, it means that $x$ is fixed by the action of $\mathbb{T}^l$, so it must be the origin of $\mathbb{C}^n$.

In this way, we get a map $\iota:\mathbb{C}^n \rightarrow E$ that extends the embedding of $M^0$ into $E$.
\begin{proposition}\label{proposition rho is comparable to rho prime}
    As functions on $\mathbb{C}^n$, we have $\rho\sim\iota^*\rho^\p$.
\end{proposition}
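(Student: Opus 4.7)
The plan is to deduce this proposition essentially as a formal corollary of Proposition \ref{proposition rho is comparable to cone radius} and Proposition \ref{proposition rho prime is proportional to cone radius}, with the map $\iota$ acting as a bridge between the two sides because it is built so as to preserve the functions $r_1,\dots,r_{l-1}$ and $\sigma$.

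First I would check that $\iota^*r_j = r_j$ and $\iota^*\sigma = \sigma$ as functions on all of $\mathbb{C}^n$. On $M^0$ this is immediate, since the isometric embedding of $(M^0,g^\p)$ into $E$ given by Proposition \ref{proposition isometric embedding of g prime} is the one that realises the functions $r_j$ and $\sigma$ of Section \ref{section The locally flat metric} as the corresponding Euclidean radial/linear coordinates of $E$. On $\mathbb{C}^n\setminus M^0$ the map $\iota$ is only defined through a choice of limit of sequences $x_j \to x$ in $M^0$, but $r_j^2$ and $\sigma$ are polynomial expressions in $\xi_1,\dots,\xi_l$ and hence extend continuously to $\mathbb{C}^n$; the analogous functions on $E$ are also continuous, so the identity passes to the limit and we get $r_j^2(\iota(x))=r_j^2(x)$ and $\sigma(\iota(x))=\sigma(x)$ everywhere.

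Next I would combine this with the two previous propositions. By Proposition \ref{proposition rho prime is proportional to cone radius} we have
\begin{align*}
\sqrt{\sum_{j=1}^{l-1}r_j^2+\sigma^2}\ \leq\ \rho^\p\ \leq\ C\sqrt{\sum_{j=1}^{l-1}r_j^2+\sigma^2}
\end{align*}
outside a compact set of $E$; pulling back by $\iota$ and using Step~1 this becomes the same two-sided estimate for $\iota^*\rho^\p$ in terms of the functions $r_j$ and $\sigma$ on $\mathbb{C}^n$, valid outside $\iota^{-1}$ of a compact set of $E$. Because $r_j^2+\sigma^2\to\infty$ along any sequence in $\mathbb{C}^n$ that escapes every compact set (this follows from Proposition \ref{proposition rho is comparable to cone radius}, which says $\rho\sim\sqrt{\sum r_j^2+\sigma^2}$), the set where the previous inequality might fail is itself contained in a compact subset of $\mathbb{C}^n$. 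Therefore $\iota^*\rho^\p\sim\sqrt{\sum_{j=1}^{l-1}r_j^2+\sigma^2}$ as functions on $\mathbb{C}^n$, and combining with Proposition \ref{proposition rho is comparable to cone radius} gives $\rho\sim\iota^*\rho^\p$.

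The one thing to be slightly careful about is the non-continuity of $\iota$ on $\mathbb{C}^n\setminus M^0$, but since the relation $\sim$ is defined in terms of boundedness of the ratio $(1+\rho)/(1+\iota^*\rho^\p)$ everywhere, and both sides are bounded on any compact subset of $\mathbb{C}^n$ (indeed $\iota^*\rho^\p$ is bounded because any sequence in $M^0$ converging to $x\in\mathbb{C}^n$ keeps $\sum r_j^2+\sigma^2$ bounded, hence by Proposition \ref{proposition rho prime is proportional to cone radius} keeps $\rho^\p$ bounded), nothing pathological can happen on a bounded region. So the main (and only) substantive work is the identification $\iota^*r_j^2=r_j^2$, $\iota^*\sigma=\sigma$; everything else is a formal combination of the two propositions already proved.
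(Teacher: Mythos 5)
Your proposal is correct and follows essentially the same route as the paper, which simply states that the result is a direct consequence of Proposition \ref{proposition rho is comparable to cone radius} and Proposition \ref{proposition rho prime is proportional to cone radius}; your additional verification that $\iota$ preserves $r_j$ and $\sigma$ (by construction on $M^0$ and by continuity of these functions on all of $\mathbb{C}^n$) is exactly the implicit step the paper leaves to the reader.
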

\begin{proof}
    It is a simple consequence of Proposition \ref{proposition rho is comparable to cone radius} and Proposition \ref{proposition rho prime is proportional to cone radius}.
\end{proof}
For simplicity, we will omit $\iota$ and simply write $\rho\sim \rho^\prime$. Intuitively, we think of $\mathbb{C}^n$ as a subset of $E$ by using $\iota$, although $\iota$ may not be injective.

By Proposition \ref{proposition g-g prime in regular region}, the metric $g$ is close to the locally flat metric $g^\p$ in the region $R_{\alpha,c}$. The following two lemmas discuss the property of the metric $g$ in the region $S_{\alpha,c}$.
\begin{lemma}\label{lemma connecting singular to regular}
Any point $x_0\in S_{\alpha,c}$ can be connected to $R_{\alpha,c}$ by a curve of length not greater than $C(\rho(x_0)^{\frac{1+\alpha}{2}} + 1)$, where $C>0$ is a constant depending on $c,\alpha$ and $\alpha_1,\dots,\alpha_l$. Similarly, it can also be connected to $\{\xi_l=\alpha_l\}$ by a curve of length not greater than $C(\rho(x_0)^{\frac{1+\alpha}{2}} + 1)$.
\end{lemma}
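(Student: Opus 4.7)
The plan is to build the connecting curve explicitly by freezing all coordinates except $\xi_l$ and moving along $\partial/\partial\xi_l$, so that the arc length reduces to a one–variable integral that I can estimate directly. Specifically, on $M^0$ the metric restricted to the slice where $\xi_1,\dots,\xi_{l-1}$ and the torus coordinates on $P$ are fixed takes the form $\frac{p_c(\xi_l)\Delta(\xi_l)}{F_l(\xi_l)}(d\xi_l)^2$, so a curve $\gamma$ on which only $\xi_l$ varies from $s_0$ to $s_1$ has length
\begin{equation*}
L(\gamma)=\int_{s_0}^{s_1}\sqrt{\frac{p_c(t)\,\Delta(t)}{F_l(t)}}\,dt,
\end{equation*}
where $\Delta(t)=\prod_{i\neq l}(t-\xi_i(x_0))$.

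Before estimating, I would fix the ranges: in $S_{\alpha,c}$ the defining inequality combined with $\rho\sim \xi_l-\xi_1$ (Proposition \ref{proposition rho is comparable to xi_l-xi_1}) gives $\alpha_l\le \xi_l(x_0)\le C\rho^{\alpha}$; and since $\xi_2,\dots,\xi_{l-1}$ stay in bounded intervals while $\xi_l(x_0)$ is comparatively small, forcing $-\xi_1(x_0)\sim \rho$. For the $\{\xi_l=\alpha_l\}$ statement I take $s_0=\alpha_l$ and $s_1=\xi_l(x_0)$ (the endpoint lies in the boundary extension of $M^0$ inside $\mathbb{C}^n$ and is reached with finite length). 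For the $R_{\alpha,c}$ statement I take $s_0=\xi_l(x_0)$ and $s_1=2c\rho^{\alpha}$; the verification that $\gamma(s_1)\in R_{\alpha,c}$ is elementary, since for large $\rho$ one has $s_1+|\xi_1(x_0)|\sim \rho$, so $c(s_1-\xi_1(x_0))^{\alpha}\sim c\rho^{\alpha}<2c\rho^{\alpha}=s_1$.

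The key step is the length estimate, which I would split into two regimes for $t$. Near $\alpha_l$, using $F_l(\alpha_l)=0$ and $F_l'(\alpha_l)=q(\alpha_l)p_c(\alpha_l)>0$, one has $F_l(t)\sim (t-\alpha_l)$; and $\Delta(t)\sim \rho$ (coming from the factor $t-\xi_1(x_0)$) while $p_c(t)$ is bounded. So the integrand is $\lesssim \sqrt{\rho/(t-\alpha_l)}$, contributing $O(\sqrt{\rho})$ on $[\alpha_l,\alpha_l+1]$. For $t\in[\alpha_l+1,\,C\rho^{\alpha}]$ one uses $F_l(t)\sim P(t)\sim t^{n-1}$ (the exponential correction in $F_l$ is bounded since $a\ge 0$), $p_c(t)\sim t^{n-l}$, and $\Delta(t)\sim \rho\,t^{l-2}$; hence the integrand is $\lesssim \sqrt{\rho/t}$, whose integral up to $C\rho^{\alpha}$ is $O(\sqrt{\rho}\cdot\rho^{\alpha/2})=O(\rho^{(1+\alpha)/2})$.

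Adding the two contributions yields $L(\gamma)\le C(\rho^{(1+\alpha)/2}+1)$ in both cases, which is the claimed bound. The main technical obstacle I anticipate is the bookkeeping of the polynomial asymptotics of $p_c,\Delta,F_l$ in the two sub-intervals and making sure the transition region is absorbed into the constants; the exponent $(1+\alpha)/2$ is forced by the dominating contribution $\sqrt{\rho}\cdot(\rho^{\alpha})^{1/2}$ coming from the large-$t$ regime, which is precisely the saving over a naïve radial bound of order $\rho$.
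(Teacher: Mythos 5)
Your construction is the same as the paper's: you slide only $\xi_l$ along the curve with all other coordinates frozen, reduce the length to the one-variable integral $\int\sqrt{p_c(t)\Delta(t)/F_l(t)}\,dt$, and extract the exponent $\tfrac{1+\alpha}{2}$ from it, so this is essentially the paper's proof with a slightly different bookkeeping of the integrand (your two-regime split near $t=\alpha_l$ and for $t\in[\alpha_l+1,C\rho^\alpha]$ versus the paper's uniform bound $\prod_j(t-\alpha_j)^{d_j}\prod_{k\ge 2}(t-\xi_{k,0})\le Cf(t)$ followed by $\int\sqrt{(t-\xi_{1,0})/(t-\alpha_l)}$). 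One quantitative point needs repair: with $s_1=2c\rho^\alpha$ the verification that $\gamma(s_1)\in R_{\alpha,c}$ does not follow as written, since $c(s_1-\xi_1(x_0))^\alpha\le c\bigl((2c+C_1)\rho\bigr)^\alpha$ where $C_1$ is the comparability constant in Proposition \ref{proposition rho is comparable to xi_l-xi_1}, and $c(2c+C_1)^\alpha$ can exceed $2c$; the fix is trivial — take $s_1=K\rho^\alpha$ with $K$ large enough that $K>c(K+C_1)^\alpha$, which is possible because $\alpha<1$, and the length bound is unchanged since still $s_1\le C\rho^\alpha$. A pleasant side effect of pegging $s_1$ to $\rho(x_0)^\alpha$ rather than to the endpoint itself (as the paper does via $\xi_{l,1}<(c+1)(\xi_{l,1}-\xi_{1,0})^\alpha$) is that you avoid the paper's bootstrap converting $\rho(x_1)$ into $\rho(x_0)$; you should still note, as the paper does, that the region where $\rho(x_0)$ is bounded is handled by compactness, which your additive $+1$ is meant to absorb.
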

\begin{proof}
    Consider the curve $\gamma(t)$ in $\mathbb{C}^n$ passing through $x_0$ such that $\check{g}_j(\gamma^\p(t), -) = 0$, $\theta_r(\gamma^\p(t)) = 0$, $\xi_j(\gamma(t)) = \xi_j(x_0)$ and $\xi_l(\gamma(t)) = t$, where $j=1,\dots,l-1$ and $r=1,\dots,l$.

    Define $\xi_{k,0} = \xi_k(x_0)$ for $k=1,\dots,l$, then by the definition of $S_{\alpha,c}$, we have
    \begin{align}
        \xi_{l,0} \leq c(\xi_{l,0} - \xi_{1,0})^\alpha.
    \end{align}
    Choose $\xi_{l,1} > \xi_{l,0}$ such that
    \begin{align}
        c(\xi_{l,1} - \xi_{1,0})^\alpha < \xi_{l,1} < (c+1) (\xi_{l,1} - \xi_{1,0})^\alpha,
    \end{align}
    Then we will consider the curve $\gamma:[\alpha_l,\xi_{l,1}] \rightarrow \mathbb{C}^n$. Note that $\gamma(\alpha_l) \in \{\xi_l = \alpha_l\}$, $\gamma(\xi_{l,0}) = x_0$, and $\gamma(\xi_{l,1}) \in R_{\alpha,c}$. Let $L$ be the length of this curve; then we have
    \begin{align}
        L = \int_{\alpha_l}^{\xi_{l,1}}\sqrt{\frac{\prod_{j=1}^{l-1}(t-\alpha_j)^{d_j} \prod_{k=1}^{l-1}(t-\xi_{k,0})}{P(t) - e^{2a(\alpha_l-t)}P(\alpha_l)}}.
    \end{align}
    Write $P(t) - P(\alpha_l) = (t-\alpha_l)f(t)$, then $f(\alpha_l)>0$ since $P^\p(\alpha_t)>0$. In fact, we know that $f(t)$ is a polynomial of degree $n-2$ and $f(t) >0$ for any $t\geq \alpha_l$. So, there exists $C>0$ depending only on $\alpha_1,\dots,\alpha_l$ such that
    \begin{align}
        \frac{\prod_{j=1}^{l-1}(t-\alpha_j)^{d_j} \prod_{k=2}^{l-1}(t-\xi_{k,0})}{f(t)} \leq C,
    \end{align}
    for all $t\geq \alpha_l$. It follows that
    \begin{align}
        L &\leq C\int_{\alpha_l}^{\xi_{l,1}}\sqrt{\frac{t-\xi_{1,0}}{t-\alpha_l}}\\
        &= 2C\sqrt{\xi_{l,1} - \xi_{1,0}}\sqrt{\xi_{l,1} - \alpha_l}\\
        &\leq C(1 + \rho(x_1))^\frac{1+\alpha}{2},
    \end{align}
    where in the last line, $C$ depends on $c,\alpha_1,\dots,\alpha_l$.

    Define $\rho_0 = 1 + \rho(x_0)$ and $\rho_1 = 1 + \rho(x_1)$, then by the triangle inequality we have $|\rho_1-\rho_0|\leq L \leq C\rho_1^{\frac{1+\alpha}{2}}$.

    Assume first that $\rho_0 > R$ for some constant $R>0$, which will be determined later. We claim that for sufficiently large $R$, we have $\rho_1\leq 3\rho_0$. If it is not true, then $\rho_1>3\rho_0>3R$, and $|\rho_1-\rho_0|>\frac{1}{2}\rho_1$, so $\frac{1}{2}\rho_1\leq C\rho_1^{\frac{1+\alpha}{2}}$, and hence $\rho_1^\frac{1-\alpha}{2}\leq 2C$, implying that $(3R)^{\frac{1-\alpha}{2}}\leq 2C$. However, the last inequality is not true if $R$ is sufficiently large.

    So in this case, we have $L\leq C(1 + \rho(x_0))^{\frac{1+\alpha}{2}}$. Finally, the set $\{x_0\in\mathbb{C}^n|1 + \rho(x_0)\leq R\}$ is compact, so in any case there exists $C>0$ such that $L\leq C(1 + \rho(x_0))^{\frac{1+\alpha}{2}}$.

    Now, considering the segment of $\gamma$ corresponding to $t\in [\xi_{l,0}, \xi_{l,1}]$, we prove the statement about connecting $S_{\alpha,c}$ to $R_{\alpha,c}$. Similarly, considering the segment of $\gamma$ corresponding to $t\in [\alpha_l, \xi_{l,0}]$, we prove the statement about connecting $S_{\alpha,c}$ to $\{\xi_l=\alpha_l\}$.
\end{proof}
Intuitively, the above lemma says that $S_{\alpha_c}$ is thin. The next lemma says that it looks like a half-line.
\begin{lemma}\label{lemma singular region looks like a line}
    There exist $\rho_0>0$ and $C>0$ depending only on $c, \alpha,\alpha_1,\dots,\alpha_l$ such that for any points $x_1,x_2\in S_{\alpha,c}$ with $\rho(x_1),\rho(x_2)>\rho_0$ and $\xi_1(x_1)\leq \xi_1(x_2)$, we have $\xi_1(x_1),\xi_1(x_2) <0$ and
    \begin{align}
        0 \leq d_g(x_1,x_2) - (\xi_1(x_2) - \xi_1(x_1)) \leq C\left(\rho(x_1)^\frac{1+\alpha}{2} + \rho(x_2)^{\frac{1+\alpha}{2}} + \frac{\xi_1(x_2) - \xi_1(x_1)}{-\xi_1(x_2)}\right).
    \end{align}
\end{lemma}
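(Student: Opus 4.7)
The proof proceeds in three steps: establishing $\xi_1<0$ on the relevant region, a $1$-Lipschitz estimate for $\xi_1$ giving the lower bound, and an explicit path construction yielding the upper bound. For the sign, any $x\in S_{\alpha,c}$ satisfies $\xi_l(x)\leq c(\xi_l(x)-\xi_1(x))^\alpha$, while Proposition \ref{proposition rho is comparable to xi_l-xi_1} gives $\xi_l-\xi_1\sim\rho$; choosing $\rho_0$ so that $\xi_l-\xi_1>c^{1/(1-\alpha)}$ whenever $\rho>\rho_0$ forces $\xi_l<\xi_l-\xi_1$, hence $\xi_1<0$. For the lower bound, reading off from \eqref{equation of g} gives
\begin{align*}
|d\xi_1|_g^2 = \frac{F_1(\xi_1)}{p_c(\xi_1)\Delta(\xi_1)} = \frac{\prod_{k=1}^{l-1}(\alpha_k-\xi_1)}{\prod_{i=2}^l(\xi_i-\xi_1)}.
\end{align*}
The ordering on $\mathring D$, together with $\xi_l>\alpha_l>\alpha_{l-1}$, gives $\xi_{k+1}>\alpha_k$ for $k=1,\dots,l-1$, so pairing the $k$-th numerator factor with the $(k+1)$-st denominator factor makes each pair $<1$. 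Hence $|d\xi_1|_g<1$, so $\xi_1$ is $1$-Lipschitz on $\mathbb{C}^n$, yielding $d_g(x_1,x_2)\geq \xi_1(x_2)-\xi_1(x_1)$.

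For the upper bound I construct a three-piece path. First, the second statement of Lemma \ref{lemma connecting singular to regular} produces, for each $i$, a curve joining $x_i$ to a point $x_i^\ast\in\{\xi_l=\alpha_l\}$ that keeps $\xi_1,\dots,\xi_{l-1}$ and the fibre coordinates fixed, at cost $\leq C\rho(x_i)^{(1+\alpha)/2}$. Second, within $\{\xi_l=\alpha_l\}$, I slide each $x_i^\ast$ to a ``standard axis point'' $y_i$ with $\xi_1(y_i)=\xi_1(x_i)$ and with $\xi_2,\dots,\xi_{l-1}$ and the angular variables fixed at prescribed reference values. Since at $x_i^\ast$ we have $\xi_l=\alpha_l$ and the remaining $\xi_j$'s are bounded, $|p_{nc}(\alpha_j)(x_i^\ast)|\sim|\xi_1(x_i)|\sim\rho(x_i)$, so $r_j(x_i^\ast)\leq C\rho(x_i)^{1/2}$; combined with Proposition \ref{proposition length of T_j} this bounds each transverse segment by $C\rho(x_i)^{1/2}\leq C\rho(x_i)^{(1+\alpha)/2}$. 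Third, I join $y_1$ to $y_2$ along the one-dimensional axis curve parametrized by $t=\xi_1$.

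On this axis curve (with $\xi_l=\alpha_l$ and the other $\xi_j$'s bounded), expanding the polynomial ratio as $\xi_1\to-\infty$ gives $|d\xi_1|_g^2=1+O(1/|\xi_1|)$, hence $|d\xi_1|_g^{-1}=1+O(1/|\xi_1|)$. Integrating from $\xi_1(x_1)$ to $\xi_1(x_2)$ yields length
\begin{align*}
(\xi_1(x_2)-\xi_1(x_1))+O\bigl(\log\tfrac{-\xi_1(x_1)}{-\xi_1(x_2)}\bigr),
\end{align*}
and $\log(1+z)\leq z$ for $z\geq 0$ bounds the log term by $(\xi_1(x_2)-\xi_1(x_1))/(-\xi_1(x_2))$. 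Summing the three contributions produces the stated upper bound. The main obstacle is step two: the locus $\{\xi_l=\alpha_l\}$ is precisely where the $T_l$-direction collapses, so the transverse motion must be carried out by successive flows of the Killing fields $T_1,\dots,T_{l-1}$ together with geodesics in the Fubini--Study factors, each piece estimated from the explicit coefficients in \eqref{equation of g}; the rest of the calculation is routine polynomial arithmetic.
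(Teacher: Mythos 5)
Your proof is correct and follows essentially the same route as the paper's: the lower bound from the coefficient of $(d\xi_1)^2$ being at least $1$, reduction to $\{\xi_l=\alpha_l\}$ via Lemma \ref{lemma connecting singular to regular}, $O(\rho^{1/2})$ control of all transverse directions there, and integration of $1+O(1/(-\xi_1))$ along $\xi_1$, with your ``standard axis points'' being only a cosmetic reorganization of the paper's single curve satisfying $\xi_1(\gamma(t))=t$. Two small points to make explicit: the transverse slide must also move $\xi_2,\dots,\xi_{l-1}$ to the reference values (their $(d\xi_j)^2$ coefficients blow up at the interval endpoints but have integrable square-root singularities, again costing $O(\rho^{1/2})$, as the paper checks), and Proposition \ref{proposition length of T_j} concerns $g^\p$ rather than $g$, so you should note $g_\theta\leq g^\p_\theta$ (the $j=l$ term is the only difference and $0\leq F_l(\xi_l)\leq P(\xi_l)$ for $\xi_l\geq\alpha_l$) in order to transfer the bound $g(T_j,T_j)\leq r_j^2$.
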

\begin{proof}
    Recall that for $j=1,\dots,l-1$, we have $\xi_j\leq \alpha_j\leq \xi_{j+1}$, so in the formula \eqref{equation of g} that defines $g$, all the coefficients of $(d\xi_j)^2$ for $j=1,\dots,l-1$ are greater than $1$. In particular, considering the coefficient of $(d\xi_1)^2$, we get
    \begin{align}
        d_g(x_1,x_2) \geq \xi_1(x_2) - \xi_1(x_1),
    \end{align}
    which is the first inequality. To prove the second inequality, we note that using Lemma \ref{lemma connecting singular to regular}, we may assume that $x_1,x_2\in \{\xi_l=\alpha_l\}$.

    By equation \eqref{equation of g} again, letting $\xi_l = \alpha_l$, we get
    \begin{align}
        g|_{\{\xi_l=\alpha_l\}} =& \sum_{j=1}^{l-1}(-1)^{l-j} p_{nc}|_{\xi_l=\alpha_l}(\alpha_j)\check{g}_j + \sum_{j=1}^{l-1}\frac{(\xi_j-\alpha_l)\prod_{k=1,k\neq j}^{l-1}(\xi_j-\xi_k)}{\prod_{k=1}^{l-1}(\xi_j-\alpha_k)}(d\xi_j)^2 +\\
        &+\frac{\prod_{k=1}^{l-1}(\xi_j-\alpha_k)}{(\xi_j-\alpha_l)\prod_{k=1,k\neq j}^{l-1}(\xi_j-\xi_k)}\left(\sum_{r=1}^{l}\sigma_{r-1}|_{\xi_l=\alpha_l}(\hat{\xi}_j)\theta_r\right)^2
    \end{align}
    We will estimate the size (diameter) of each of its components.

    Firstly, by Proposition \ref{proposition rho is comparable to xi_l-xi_1} $(-1)^{l-j}p_{nc}|_{\xi_l=\alpha_l}(\alpha_j) \sim \rho$, so the size of the $\check{g}_j$ component is $O(\rho^{\frac{1}{2}})$ as $\rho \rightarrow +\infty$.

    Secondly, for $j=2,\dots,l-1$, the coefficient of $(d\xi_j)^2$ is bounded by
    \begin{align}
        C(\xi_j-\xi_1)\frac{1}{(\xi_j-\alpha_{j-1})(\alpha_j - \xi_j)}.
    \end{align}
    So, the size of the $(d\xi_j)^2$ component is $O(\rho^{\frac{1}{2}})$ as $\rho \rightarrow +\infty$.

    Thirdly, the coefficient of $\left(\sum_{r=1}^{l}\sigma_{r-1}|_{\xi_l=\alpha_l}(\hat{\xi}_1)\theta_r\right)^2$ is bounded by $1$ and we have $\sigma_{r-1}|_{\xi_l=\alpha_l}(\hat{\xi}_1) \sim 1$, so the size of the $\left(\sum_{r=1}^{l}\sigma_{r-1}|_{\xi_l=\alpha_l}(\hat{\xi}_1)\theta_r\right)^2$ component is $O(1)$ as $\rho \rightarrow +\infty$.

    Fourthly, for $i=2,\dots,l-1$, by \eqref{equation theta component of g prime}, when $\xi_l=\alpha_l$, we have
    \begin{align}
        \left|\left(\sum_{r=1}^{l}\sigma_{r-1}|_{\xi_l=\alpha_l}(\hat{\xi}_j)\theta_r\right)(T_j)\right| \leq C(1 + \rho)
    \end{align}
    Also, we note that the coefficient of $\left(\sum_{r=1}^{l}\sigma_{r-1}|_{\xi_l=\alpha_l}(\hat{\xi}_j)\theta_r\right)^2$ is of $O(\frac{1}{\rho})$, so the size of the $\left(\sum_{r=1}^{l}\sigma_{r-1}|_{\xi_l=\alpha_l}(\hat{\xi}_j)\theta_r\right)^2$ component is $O(\rho^{\frac{1}{2}})$ as $\rho \rightarrow +\infty$.

    Finally, consider a curve $\gamma: [\xi_1(x_1), \xi_2(x_1)] \rightarrow \{\xi_l = \alpha_l\}$ joining $x_1,x_2$, such that $\xi_1(\gamma(t)) = t$. To estimate the length of $\gamma$, it suffices to consider the contribution of the $(d\xi_1)^2$ component, since the contribution of the other components is at most $C(\rho(x_1)^\frac{1}{2} + \rho(x_2)^\frac{1}{2})$.

    Since $\rho \sim \xi_l-\xi_1$, when $\xi_l=\alpha$ we have $-\xi_1\sim \rho$ so by letting $\rho_0$ be large enough, we have $\xi_1(x_1),\xi_1(x_2)<0$. Examining the coefficient of $(d\xi_1)^2$, we find that it is bounded from above by $1 + \frac{C}{-\xi_1}$. Note that $\sqrt{1 + x}\leq 1 + \frac{1}{2}x$ for $x>0$, we get
    \begin{align}
        \int_{\xi_1(x_1)}^{\xi_1(x_2)}\sqrt{1 + \frac{C}{-t}}dt \leq
        \int_{\xi_1(x_1)}^{\xi_1(x_2)}(1 + \frac{C}{-2t})dt\leq
        (\xi_1(x_2) - \xi_1(x_1)) + C\frac{\xi_1(x_2) - \xi_1(x_1)}{-\xi_1(x_2)},
    \end{align}
    which proves the second inequality.
\end{proof}
Now it is time to prove the main result of this section.
\begin{theorem}\label{theorem asymptotic cone of g}
    The asymptotic cone of the K\"ahler-Ricci soliton $(\mathbb{C}^n,g)$ is unique and is $E/\Lambda^+ = (\prod_{j=1}^{l-1}\mathbb{C}^{d_j+1}/\Lambda)\times \mathbb{R}$.
\end{theorem}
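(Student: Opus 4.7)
The strategy is to imitate the proof of Proposition \ref{proposition asymptotic cone of g prime}, replacing the submetry $f:E\to E/\Lambda^+$ by the composition $h:=f\circ\iota:\mathbb{C}^n\to E/\Lambda^+$. For $\lambda>0$, set $h_\lambda:=\lambda h$ viewed as a base-point preserving map $(\mathbb{C}^n,\lambda^2 g)\to E/\Lambda^+$. I would show that for every small $\epsilon>0$ there exists $\lambda_0>0$ such that $h_\lambda$ is a pointed $\epsilon$-Gromov-Hausdorff approximation for all $0<\lambda<\lambda_0$. Once this claim is established, uniqueness of the asymptotic cone of $(\mathbb{C}^n,g)$ and its identification with $E/\Lambda^+$ follow exactly as in the last paragraph of the proof of Proposition \ref{proposition asymptotic cone of g prime}, using the completeness of the Gromov-Hausdorff distance on proper pointed metric spaces.

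The $\epsilon$-onto condition reduces to showing that every $y\in E/\Lambda^+$ with $d(0,y)<\epsilon^{-1}$ lies near $h_\lambda(x)$ for some $x$ in the $\epsilon^{-1}$-ball of $(\mathbb{C}^n,\lambda^2 g)$; this follows by picking a preimage of $y$ in $E$ of unscaled distance $\leq\lambda^{-1}\epsilon^{-1}$ from $0$, choosing a nearby point in the open dense subset $\iota(M^0)\subset E$, and invoking Proposition \ref{proposition rho is comparable to rho prime} to guarantee that the corresponding $x\in M^0\subset\mathbb{C}^n$ lies in a ball of $(\mathbb{C}^n,\lambda^2 g)$ of radius $\leq C\epsilon^{-1}$.

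The heart of the argument is the $\epsilon$-isometry condition. Fix $x_1,x_2$ in the $\epsilon^{-1}$-ball of $(\mathbb{C}^n,\lambda^2 g)$ and pick any $\alpha\in(0,1)$. When both endpoints lie in $R_{\alpha,c}$ and admit a minimising $g$-geodesic between them inside $R_{\alpha,c}$, Proposition \ref{proposition g-g prime in regular region} gives $|g-g^\p|_g=O(\rho^{-\alpha(n-1)})$, so lengths measured by $g$ and $g^\p$ along such a curve differ by at most $C\rho^{1-\alpha(n-1)}$; after rescaling this contributes $O(\lambda^{\alpha(n-1)}\epsilon^{-(1-\alpha(n-1))})\to 0$. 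The resulting $g^\p$-distance is then compared with $d_{E/\Lambda^+}$ via the submetry $f$, exactly as in the proof of Proposition \ref{proposition asymptotic cone of g prime}. If an endpoint lies in $S_{\alpha,c}$, Lemma \ref{lemma connecting singular to regular} slides it into $R_{\alpha,c}$ (or onto $\{\xi_l=\alpha_l\}$) at cost $O(\rho^{(1+\alpha)/2})$, which after rescaling is $O(\lambda^{(1-\alpha)/2}\epsilon^{-(1+\alpha)/2})\to 0$; when both endpoints sit in $S_{\alpha,c}$ with large $\rho$, Lemma \ref{lemma singular region looks like a line} further shows that $d_g(x_1,x_2)$ is captured to leading order by $\xi_1(x_2)-\xi_1(x_1)$, which matches $d_{E/\Lambda^+}(h_\lambda(x_1),h_\lambda(x_2))$ along the last $\mathbb{R}$-factor parametrised by $\sigma$ up to a bounded shift, and hence up to an error vanishing after multiplication by $\lambda$.

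The principal obstacle is analytic control in $S_{\alpha,c}$, where $g-g^\p$ is not small and where $\iota$ is not known to be continuous; thus one cannot simply read off $d_g$ from $\iota^* d_{g_{Euc}}$. Lemmas \ref{lemma connecting singular to regular} and \ref{lemma singular region looks like a line} are tailored to overcome exactly this, saying that $S_{\alpha,c}$ is thin in the directions transverse to $\xi_1$ and elongated along it, so that its intrinsic $g$-geometry is captured, to leading order, by the coordinate parametrising the $\mathbb{R}$-factor of $E/\Lambda^+$. Careful bookkeeping of the rescaling exponents $\alpha(n-1)$ and $(1-\alpha)/2$, both positive for $\alpha\in(0,1)$ and $n\geq 2$, together with the triangle inequality applied to the approximants produced above, then completes the proof.
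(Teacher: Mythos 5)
Your proposal follows essentially the same route as the paper: the same map (the composition of $\iota$, the submetry $f$ and the rescaling), the same claim that it is a pointed $\epsilon$-GHA for small $\lambda$, the same decomposition into $R_{\alpha,c}$ and $S_{\alpha,c}$ handled by Proposition \ref{proposition g-g prime in regular region}, Lemma \ref{lemma connecting singular to regular} and Lemma \ref{lemma singular region looks like a line}, and the same uniqueness argument via completeness of the Gromov--Hausdorff distance. One small caveat: $\iota(M^0)$ is \emph{not} dense in $E$ (its image is cut out by $r_j>0$ and $\frac{1}{4}\sum_{j=1}^{l-1}\frac{r_j^2}{\alpha_l-\alpha_j}>-\sigma$), so the $\epsilon$-onto step should instead use that after rescaling the image $f_\lambda(M^0)$ is defined by $\frac{1}{4}\sum_{j=1}^{l-1}\frac{r_j^2}{\alpha_l-\alpha_j}>-\lambda\sigma$ and hence becomes $\epsilon$-dense in any fixed ball as $\lambda\to 0$, which is exactly how the paper argues and which also repairs your density claim.
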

\begin{proof}
    For any $\epsilon > 0$, we claim that there exists $\lambda_0>0$ such that for any $)<\lambda<\lambda_0$, the map $f_\lambda : \mathbb{C}^n \rightarrow E \rightarrow E/\Lambda^+$ is a pointed $\epsilon$-GHA of $\lambda^2g$. Here $f_\lambda$ is the composition of $\iota$, the submetry $f: E\rightarrow E/\Lambda^+$ and the rescaling by $\lambda$.

    Recall that $\iota(0) =0$, so $f_\lambda(0) = 0$ hence $f_\lambda$ is pointed.

    We will first show that $f_\lambda$ is $\epsilon$-isometry for any sufficiently small $\lambda$. More precisely, it means that for any two points $x_1,x_2\in \mathbb{C}^n$ such that $\rho(x_i)<(\lambda\epsilon)^{-1}$, we have
    \begin{align}
        |d_{\lambda^2g}(x_1,x_2) - d_{E/\Lambda^+}(f_\lambda(x_1),f(x_2))| < \epsilon,
    \end{align}
    or equivalently,
    \begin{align}
        |d_g(x_1,x_2) - d_{E/\Lambda^+}(f(x_1),f(x_2))| < \lambda^{-1}\epsilon.
    \end{align}
    Observe that the ball $\{\rho\leq(\lambda\epsilon)^{-\frac{2}{3}}\}\subset \mathbb{C}^n$ has radius $\lambda^\frac{1}{3}\epsilon^{-\frac{2}{3}}$ measured by $\lambda^2g$. So, its diameter is small when $\lambda$ is sufficiently small. For example, when $\lambda < \epsilon^5$, we have $\lambda^\frac{1}{3}\epsilon^{-\frac{2}{3}} < \epsilon$. Similarly, by Proposition \ref{proposition rho is comparable to cone radius}, its image under $f_\lambda$ is also of diameter smaller than $\epsilon$ if $\lambda$ is sufficiently small. Thus, we may assume that $\rho(x_i) > (\lambda\epsilon)^{-\frac{2}{3}}$

    Consider the case where $x_1,x_2\in S_{\alpha,c}$, without loss of generality, we may assume that $\xi_1(x_2) \geq \xi_1(x_1)$. By Lemma \ref{lemma singular region looks like a line}, we know that the difference between $d_g(x_1,x_2)$ and $\xi_1(x_2) - \xi_1(x_1)$ is smaller than
    \begin{align}
        C(\lambda\epsilon)^{-\frac{1+\alpha}{2}} + C\frac{\xi_1(x_2) - \xi_1(x_1)}{-\xi_1(x_2)}.
    \end{align}
    In $S_{\alpha,c}$, we have $-\xi_1\sim \rho$, so $\frac{-\xi_1(x_1)}{-\xi_1(x_2)}\sim \frac{\rho(x_1)}{\rho(x_2)}\leq \frac{\rho(x_2)^\frac{3}{2}}{\rho(x_2)} = \rho(x_2)^\frac{1}{2}\leq (\lambda\epsilon)^{-\frac{1}{2}}$. Here we have used the assumption that $\rho(x_i)\in [(\lambda\epsilon)^{-\frac{2}{3}}, (\lambda\epsilon)^{-1}]$.

    Note that since $x_i\in S_{\alpha,c}$, we have $|p_{nc}(\alpha_j)(x_i)|\leq C\rho(x_i)^{1+\alpha}$, so $r_j(x_i)\leq C\rho^\frac{1+\alpha}{2}$, and $||\sigma(x_2) - \sigma(x_1)| - (\xi_1(x_2) - \xi_1(x_1))| \leq C(\rho(x_1)^\alpha + \rho(x_2)^\alpha)$.

    Combining the above estimates, we know that
    \begin{align}
        |d_g(x_1,x_2) - d_{E/\Lambda^+}(f(x_1),f(x_2))| < C((\lambda\epsilon)^{-\frac{1+\alpha}{2}} + (\lambda\epsilon)^{-\frac{1}{2}} + (\lambda\epsilon)^{-\alpha}).
    \end{align}
    By choosing $\lambda_0$ sufficiently small, the right-hand side in the above inequality is smaller than $\lambda^{-1}\epsilon$. So $f_\lambda$ is an $\epsilon$-isometry when restricted to $S_{\alpha, c}$.

    Consider the case where $x_1,x_2\in R_{\alpha,c}$. Let $\gamma: [0,L] \rightarrow \mathbb{C}^n$ be the minimal geodesic with respect to $\lambda^2g$ that joins $x_1$ and $x_2$. If $\gamma$ is contained in $R_{\alpha,c}$, then by Proposition \ref{proposition g-g prime in regular region}, we have $|l_{\lambda^2g}(\gamma) - l_{\lambda^2g^\p}(\gamma)| < \epsilon$ if $\lambda$ is sufficiently small. It follows that
    \begin{align}
        d_{E/\Lambda^+}(f_\lambda(x_1),f_{\lambda}(x_2)) \leq d_{\lambda^2g^\p}(x_1,x_2) \leq l_{\lambda^2g^\p}(\gamma) \leq l_{\lambda^g}(\gamma) + \epsilon = d_{\lambda^2g}(x_1,x_2) + \epsilon.
    \end{align}

    If $\gamma$ is not contained in $R_{\alpha,c}$, then find $t_1 \in (0,L)$ such that $\gamma(t)\in R_{\alpha,c}$ for all $t\in (0,t_1)$ and $\gamma(t_1)\in S_{\alpha,c}$. Similarly, find $t_2 \in (0,L)$ such that $\gamma(t)\in R_{\alpha,c}$ for all $t\in (t_2,L)$ and $\gamma(t_2)\in S_{\alpha,c}$. Then if $\lambda$ is chosen small enough, we have
    \begin{align*}
        d_{E/\Lambda^+}(f_\lambda(x_1), f_\lambda(x_2)) \leq&
        d_{E/\Lambda^+}(f_\lambda(x_1), f_\lambda(\gamma(t_1))) +
        d_{E/\Lambda^+}(f_\lambda(\gamma(t_1)), f_\lambda(\gamma(t_2))) +\\
        &+d_{E/\Lambda^+}(f_\lambda(\gamma(t_2)), f_\lambda(x_2))\\
        \leq& d_{\lambda^2g}(x_1, \gamma(t_1)) + d_{\lambda^2g}(\gamma(t_1), \gamma(t_2)) + d_{\lambda^2g}(\gamma(t_2), x_2) + \epsilon\\
        =& d_{\lambda^2g}(x_1, x_2) + \epsilon.
    \end{align*}
    Thus, in any case we have $d_{E/\Lambda^+}(f_\lambda(x_1), f_\lambda(x_2)) \leq d_{\lambda^2g}(x_1, x_2) + \epsilon.$

    Conversly, let $\bar{\gamma}: [0,L]\rightarrow E/\Lambda^+$ be the minimal geodesic that joins $f_\lambda(x_1),f_\lambda(x_2)$ in $E/\Lambda^+$. Let $\gamma: [0,L]\rightarrow E$ be the horizontal lift of $\bar{\gamma}$ with respect to the submetry $f_\lambda$, such that $\gamma(0) = x_1$. Denote $x_2^\p = \gamma(L)$, then $f_\lambda(x_2) = f_\lambda(x_2^\p)$, and $d_{\lambda^2g^\p}(x_1,x_2^\p) = l_{\gamma^2g^\p}(\gamma)$. If $\gamma$ is contained in $R_{\alpha,c}$, then $|l_{\lambda^2g}(\gamma) - l_{\lambda^2g^\p}(\gamma)| < \frac{1}{2}\epsilon$ if $\lambda$ is sufficiently small. And by the proof of Proposition \ref{proposition asymptotic cone of g prime}, the $\Lambda^+$ orbit passing through $x_2$ has $g^\p$-diameter smaller than $\frac{1}{2}\epsilon$ if $\lambda$ is sufficiently small, hence its $g$-diameter is also smaller than $\frac{1}{2}\epsilon$ if $\lambda$ is sufficiently small. It follows that
    \begin{align}
        d_{\lambda^2g}(x_1,x_2) &\leq d_{\lambda^2g}(x_1,x_2^\p) + \frac{1}{2}\epsilon\\
        &\leq l_{\lambda^2g}(\gamma) + \frac{1}{2}\epsilon\\
        &\leq l_{\lambda^2g^\p}(\gamma) + \epsilon \\
        &= d_{E/\Lambda^+}(f_\lambda(x_1), f_\lambda(x_2)) + \epsilon.
    \end{align}

    If $\gamma$ is not contained in $R_{\alpha,c}$, then by dividing $\gamma$ into three segments as we have done before, we still have $d_{\lambda^2g}(x_1,x_2) \leq d_{E/\Lambda^+}(f_\lambda(x_1), f_\lambda(x_2)) + \epsilon$ when $\lambda$ is sufficiently small.

    In conclusion, we have shown that $f_\lambda$ is a $\epsilon$ isometry when restricted to $R_{\alpha, c}$.

    The case where $x_1\in S_{\alpha,c}$ and $x_2\in R_{\alpha,c}$ can be treated by a similar method as the case where $x_1,x_2\in R_{\alpha,c}$. In fact, it suffices to divide the geodesic into two segments. In summary, we have shown that $f_\lambda$ is an $\epsilon$-isometry if $\lambda<\lambda_0$ and $\lambda_0$ depends only on $\epsilon$.

    It remains to show that $f_\lambda$ is $\epsilon$-onto. Take any $y\in E/\Lambda^+$ with $|y|<\epsilon^{-1}$. Recall that the image $f_\lambda(M^0)$ is determined by $\frac{1}{4}\sum_{j=1}^{l-1}\frac{1}{\alpha_l-\alpha_j}r_j^2>-\lambda\sigma$ and $r_j>0$. So for $\lambda$ sufficiently small, there exists $x\in M^0$ such that $d_{E/\Lambda^+}(y,f_\lambda(x))<\epsilon.$ It follows that $|f_\lambda(x)| < \epsilon^{-1} + \epsilon$. By Proposition \ref{proposition rho is comparable to cone radius}, there exists $C_0>0$ independent of $\epsilon$ such that $\rho(x) \leq C_0\lambda^{-1}\epsilon^{-1}$. Since we have proved that $f_\lambda$ is $\epsilon$-isometric, for $\lambda$ sufficiently small, we have $|\lambda\rho(x) - |f_\lambda(x)|| < \epsilon$. Hence, we have $\lambda\rho(x) < \epsilon^{-1} + 2\epsilon$.

    Let $x^\p\in \mathbb{C}^n$ such that $d_{\lambda^2g}(x,x^\p) < 2\epsilon$ and $\lambda\rho(x^\p) < \epsilon$, then
    \begin{align}
       d_{E/\Lambda^+}(y,f_\lambda(x^\p)) &< d_{E/\Lambda^+}(y,f_\lambda(x)) + d_{E/\Lambda^+}(f_\lambda(x^\p), f_\lambda(x^\p)) \\
       &< \epsilon + d_{\lambda^2g}(x,x^\p) + \epsilon\\
       &< 4\epsilon
    \end{align}
    Thus, we have shown that $f_\lambda$ is also $\epsilon$-onto.
    In conclusion, $E/\Lambda^+$ is an asymptotic cone of $(\mathbb{C}^n, g)$. Moreover, since $(\mathbb{C}^n, g)$ is complete, our proof implies that $E/\Lambda^+$ is the unique asymptotic cone of $(\mathbb{C}^n, g)$.
\end{proof}

The dimension of the asymptotic cone $E/\Lambda^+$ is $2n-1-\dim\Lambda$. By Proposition \ref{proposition dimension of Lambda}, it is $2n - \dim_\mathbb{Q}\op{Span}_\mathbb{Q}\{1,\tau_1,\dots,\tau_{l-1}\}$.

Note that in \cite[Lemma 5.8]{apostolov2023hamiltonian} it is proved that the volume growth of $(\mathbb{C}^n,g)$ is of order $r^{2n-1}$. So, if $\tau$ is rational, then the order of volume growth matches the dimension of the asymptotic cone. However, if not, then the order of volume growth is strictly larger than the dimension of the asymptotic cone.

\begin{example}
    Consider the special case where $l=2$, then have to set $d_1 = n-2$. In this case $\tau_1 = -\frac{1}{n-1}$, so $\Lambda = \mathbb{Z}_{n-1}$. By Theorem \ref{theorem asymptotic cone of g}, the asymptotic cone is $(\mathbb{C}^{n-1}/\mathbb{Z}_{n-1})\times \mathbb{R}$. If $a=0$ and $n=2$, then it is known in \cite{apostolov2023hamiltonian} that we will get the Taub-NUT metric, whose asymptotic cone is $\mathbb{R}^3$.
\end{example}

\begin{example}
    Consider the case where $l=3$, without loss of generality, we may assume $\alpha_1=0$ and $\alpha_2=1$. Then it follows that $\tau_1 = -\frac{1}{d_1 + 1}(\alpha_3-1)$ and $\tau_2 = -\frac{1}{d_2 + 1}\alpha_3$. If $\alpha_3\in \mathbb{Q}$, then $\dim\Lambda = 0$ and the asymptotic cone is of dimension $2n-1$. If $\alpha_3\notin\mathbb{Q}$, then $\dim\Lambda = 1$ and the asymptotic cone is of dimension $2n-2$.
\end{example}

\section{The special case where $l=2$ and $a=0$}\label{section The special case}
In this section we will study in detail the special case where $l=2$ and $a=0$. If $n>2$, then it is a generalization of the Taub-NUT metric (which is an ALF gravitational instanton) to higher dimensions. Indeed, we will show that this metric is ALF in a certain sense.
   % \begin{definition}
   %     Let $(M,g)$ be a complete non-compact Riemannian manifold of dimension $n$. We say that $(M,g)$ has the ALF property if
  %      \begin{itemize}
   %         \item its Riemannian curvature $\op{Rm}_g$ is bounded by %$\frac{C}{\rho}$ for some $C>0$,
   %         \item the volume growth of $g$ is of order $n-1$,
 %           \item it has an asymptotic cone of dimension $n-1$,
  %      \end{itemize}
 %      where $\rho$ is the distance to a point of $M$ measured by $g$.
   % \end{definition}

\subsection{Description of the metric}
When $l=2$, we have $d_1 = n-2$. Without loss of generality, we may assume that $\alpha_1=0$, $\alpha_2=1$. Consequently, we have $\mathring{D}= (-\infty, 0)\times(1, +\infty)$. Let $(\xi_1,\xi_2)\in \mathring{D}$ be its coordinates, then $p_c(t) = t^{n-2}$, $p_{nc}(t) = (t-\xi_1)(t-\xi_2)$, $P(t) = F_1(t) = t^{n-1}$. Under the assumption that $a=0$, we have $q(t) = n-1$ and $F_2(t) = t^{n-1} - 1$.

In this special case, $v_1 = (0,-2)$ and $v_2 = \frac{2}{n-1}(-1,1)$. Now $P$ is the principal $\mathbb{T}^2$-bundle associated with the complex vector bundle $O(-1)\oplus \mathbb{C}$ over $\mathbb{CP}^{n-2}$. Here, $O(-1)$ is the tautological bundle and $\mathbb{C}$ is the trivial bundle. Recall that $T_1,T_2$ are the vector fields on $P$ corresponding to the rotation in $O(-1)$ and the trivial bundle $\mathbb{C}$. Let $\eta_1,\eta_2$ be the connection $1$-form associated with $T_1,T_2$ such that $\eta_i(T_j) = \delta_{ij}$ and curvature $d\eta_1=\check{\omega}_1^0$, $d\eta_2=0$.

Recall that $T_1,T_2$ correspond to $v_1,v_2$ and $K_1,K_2$ correspond to $e_1,e_2$, so we have $K_1 = -\frac{n-1}{2}T_2 - \frac{1}{2}T_1$ and $K_2 = -\frac{1}{2}T_1$. Since $\theta_1,\theta_2$ is the dual basis to $K_1,K_2$, we have
\begin{align}\label{theta1}
\theta_1 &= -\frac{2}{n-1}\eta_2,\\ \label{theta2}
\theta_2 &= -2\eta_1 + \frac{2}{n-1}\eta_2.
\end{align}
It follows that $d\theta_1=0$ and $d\theta_2 = -2\check{\omega}_1^0 = -\check{\omega}_1$.

Now the K\"ahler metric $(g,\omega)$ on $M^0 = \mathring{D}\times P$ is given by
\begin{align}\label{metric of Apostolov-Cifarelli}
g = &-\xi_1\xi_2\check{g}_1 + \frac{\xi_2 - \xi_1}{-\xi_1}(d\xi_1)^2 + \frac{\xi_2^{n-2}(\xi_2-\xi_1)}{\xi_2^{n-1}-1}(d\xi_2)^2 +\\ \label{end of g}
    &+ \frac{-\xi_1}{\xi_2-\xi_1}(\theta_1 + \xi_2\theta_2)^2 + \frac{\xi_2^{n-1}-1}{\xi_2^{n-2}(\xi_2-\xi_1)}(\theta_1 + \xi_1\theta_2)^2,\\
\omega = &-\xi_1\xi_2\check{\omega}_1 + d\sigma_1\wedge\theta_1 + d\sigma_2\wedge\theta_2, \label{end of formulas of Apostolov-Cifarelli}
\end{align}
where $\sigma_1 = \xi_1 + \xi_2$, $\sigma_2 = \xi_1\xi_2$. Its extension to $\mathbb{C}^n$ is the Apostolov-Cifarelli metric (of the Taub-NUT type) and we still denote it by $(g,\omega)$.

In this case, we have
\begin{align}
g^\p = &-\xi_1\xi_2\check{g}_1 + \frac{\xi_2-\xi_1}{-\xi_1}(d\xi_1)^2 + \frac{\xi_2-\xi_1}{\xi_2}(d\xi_2)^2 + \\
       &+\frac{-\xi_1}{\xi_2-\xi_1}(\theta_1 + \xi_2\theta_2)^2 + \frac{\xi_2}{\xi_2-\xi_1}(\theta_1 + \xi_1\theta_2)^2.
\end{align}
Note that $\tau_1 = -\frac{1}{n-1}$, then by Proposition \ref{proposition isometric embedding of g prime}, we know that $(M^0,g^\p)$ can be isometrically embedded into $E = ((\mathbb{C}^{n-1}\times \mathbb{R})/\mathbb{Z})\times \mathbb{R} = ((\mathbb{C}^{n-1}\times \mathbb{S}^1)/\mathbb{Z}_{n-1})\times \mathbb{R}$ where $\mathbb{Z}_{n-1}$ acts on $\mathbb{C}^{n-1}$ and $\mathbb{S}^1$ by rotation. By Theorem \ref{theorem asymptotic cone of g}, its asymptotic cone is $(\mathbb{C}^{n-1}/\mathbb{Z}_{n-1})\times \mathbb{R}$.

\subsection{The $SOB(2n-1)$ property}
\begin{proposition}\label{proposition volume growth of g}
The volume growth of $g$ is of order $2n-1$. More precisely, there exists $C>0$ such that for any $R>C$, we have $C^{-1}R^{2n-1} \leq Vol(g, B(g,R)) \leq CR^{2n-1}$.
\end{proposition}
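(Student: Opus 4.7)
The plan is to compute the Riemannian volume form of $g$ explicitly from \eqref{metric of Apostolov-Cifarelli}--\eqref{end of g}, and then reduce the computation of $\op{Vol}(g,B(g,R))$ to an elementary two-dimensional integral in $(\xi_1,\xi_2)$ by applying the distance estimate $\rho\sim\xi_2-\xi_1$ from Proposition \ref{proposition rho is comparable to xi_l-xi_1}.

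The main algebraic step is a determinant calculation exploiting the block structure of $g$: orthogonally, $g$ splits into the $\mathbb{CP}^{n-2}$-factor with conformal factor $-\xi_1\xi_2$, the diagonal $(d\xi_1,d\xi_2)$-block, and the $(\theta_1,\theta_2)$-block. The $\mathbb{CP}^{n-2}$-piece contributes $(-\xi_1\xi_2)^{n-2}\,d\op{vol}_{\check{g}_1}$ on the real $2(n-2)$-dimensional factor. The determinant of the $\xi$-block equals $\tfrac{(\xi_2-\xi_1)^2\xi_2^{n-2}}{-\xi_1(\xi_2^{n-1}-1)}$. For the $\theta$-block, writing it as $aUU^T+bVV^T$ with $U=(1,\xi_2)^T$, $V=(1,\xi_1)^T$, the $2\times 2$ determinant equals $ab(\xi_1-\xi_2)^2=\tfrac{-\xi_1(\xi_2^{n-1}-1)}{\xi_2^{n-2}}$. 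The product collapses to $(\xi_2-\xi_1)^2$, giving the clean formula
\[
d\op{vol}_g=(-\xi_1\xi_2)^{n-2}(\xi_2-\xi_1)\,d\xi_1\wedge d\xi_2\wedge\theta_1\wedge\theta_2\wedge d\op{vol}_{\check{g}_1}.
\]

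Next, by Proposition \ref{proposition rho is comparable to xi_l-xi_1}, for $R$ sufficiently large the ball $B(g,R)$ is sandwiched between $\{\xi_2-\xi_1\leq A^{-1}R\}\cap\mathring{D}$ and $\{\xi_2-\xi_1\leq AR\}\cap\mathring{D}$ for some $A>0$. Since the $\mathbb{T}^2$-fiber of $P$ and $\mathbb{CP}^{n-2}$ are compact and the form $\theta_1\wedge\theta_2\wedge d\op{vol}_{\check{g}_1}$ is independent of $(\xi_1,\xi_2)$, integrating them out contributes only a finite positive constant. Substituting $u=-\xi_1>0$, $v=\xi_2>1$, it suffices to estimate
\[
I(R')=\int\!\!\!\int_{u>0,\,v>1,\,u+v\leq R'}(uv)^{n-2}(u+v)\,du\,dv
\]
both above and below for $R'=AR$ and $R'=A^{-1}R$.

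Finally, the rescaling $u=R'x$, $v=R'y$ makes the integrand homogeneous of degree $2n-2$ integrated over a region of linear size $\sim 1$, giving $I(R')\sim (R')^{2n-1}$ for $R'$ large. The upper bound is immediate from $u,v\leq R'$. The lower bound follows by restricting to the sub-region $\{R'/10\leq u,v\leq R'/4\}$, on which the integrand is of size $(R')^{2n-3}$ on an area $\sim (R')^2$; the constraint $v>1$ is harmless for large $R'$. The main work is the volume-form simplification in the second paragraph; once that is in hand, the argument is just dimensional scaling, so I do not expect a substantive obstacle.
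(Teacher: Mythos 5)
Your proposal is correct, but it takes a genuinely different route from the paper: the paper offers no argument at all for this proposition and simply refers to \cite[Lemma 5.8]{apostolov2023hamiltonian} (noting the statement holds for all $l\geq 2$, $a\geq 0$), whereas you give a direct, self-contained computation for the case at hand. Your volume-form collapse checks out: the Gram matrix in the adapted coframe is block diagonal, the $\theta$-block determinant is $ab\det([U\ V])^2=\frac{-\xi_1(\xi_2^{n-1}-1)}{\xi_2^{n-2}}$, and the product with the $\xi$-block determinant indeed reduces to $(\xi_2-\xi_1)^2$, so $d\op{vol}_g=(-\xi_1\xi_2)^{n-2}(\xi_2-\xi_1)\,d\xi_1\wedge d\xi_2\wedge\theta_1\wedge\theta_2\wedge d\op{vol}_{\check g_1}$ (equivalently, this is the Liouville identity $\omega^n/n!$ in momentum--angle variables, since $d\sigma_1\wedge d\sigma_2=(\xi_1-\xi_2)d\xi_1\wedge d\xi_2$, which is why the cancellation is not an accident and would extend to general $l$). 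The scaling argument then gives $I(R')\sim (R')^{2n-1}$ as you say. Two points you should make explicit to be complete: first, the comparison $\rho\sim\xi_2-\xi_1$ of Proposition \ref{proposition rho is comparable to xi_l-xi_1} is itself quoted from the same reference and only holds outside a compact set, so the sandwich of $B(g,R)$ between sublevel sets of $\xi_2-\xi_1$ is valid up to an $O(1)$ correction, harmless for $R$ large; second, the computation takes place on $M^0=\mathring D\times P$, so you should note that $\mathbb{C}^n\setminus M^0$ has measure zero (it is a union of lower-dimensional toric strata), and that $\int_{\mathbb{T}^2}\theta_1\wedge\theta_2$ equals the covolume of $\Gamma_v$, a finite positive constant. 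With these routine remarks added, your argument is a complete proof in the case $l=2$, $a=0$; what the paper's citation buys instead is the statement in full generality without any computation.
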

For the proof of Proposition \ref{proposition volume growth of g}, we refer to \cite[Lemma 5.8]{apostolov2023hamiltonian}. In fact, it is true for any choice of $l\geq 2$ and $a\geq 0$.

We will show that the Apostolov-Cifareli metric of the Taub-NUT type is $SOB(2n-1)$ in the following sense:
    \begin{definition}\label{definition of SOB}
        A complete non-compact Riemannian manifold $(N,g)$ of dimension $m>2$ is called $SOB(\beta)$, ($\beta\in\mathbb{R_+}$) if there exist $x_0\in N$ and $C\geq 1$ such that
        \begin{itemize}
            \item Let $A(x_0,s,t)=\{s\leq r(x)\leq t\}$ be the annulus, then for sufficiently large $D>0$, any two points $m_1,m_2\in N$ with $r(m_i)=D$ can be joined by a curve of length at most $CD$, lying in the annulus $A(x_0,C^{-1}D,CD)$,
            \item $Vol(g,B(x_0,s))\leq Cs^\beta$ for all $s\geq C$,
            \item $Vol(g,B(x,(1-\frac{1}{C})r(x)))\geq \frac{1}{C}r(x)^\beta$,
            \item $\op{Ric}(x)\geq -Cr(x)^{-2}$,
        \end{itemize}
        if $r(x)=d(x_0,x)\geq C$.
    \end{definition}
\begin{proposition}\label{proposition SOB(2n-1)}
    If $l=2$ and $a=0$, then the Apostolov-Cifareli metric $(\mathbb{C}^n,g)$ of the Taub-NUT type is $SOB(2n-1)$.
\end{proposition}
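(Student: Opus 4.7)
I would verify the four items of Definition \ref{definition of SOB} in turn. Items (ii) and (iv) are essentially free: item (ii) is precisely the upper estimate in Proposition \ref{proposition volume growth of g}, and item (iv) holds trivially because $a=0$ makes $g = g_{2,n-2}$ Ricci-flat, so $\op{Ric}\equiv 0\geq -Cr^{-2}$.

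For the annulus connectivity, item (i), my plan is to import the obvious connectivity property of the locally flat metric $g^\p$ from the ambient flat space $E$ to $\mathbb{C}^n$. Fix $\alpha\in(0,1)$. Given $m_1,m_2\in\mathbb{C}^n$ with $\rho(m_i)=D$ large, Lemma \ref{lemma connecting singular to regular} connects each $m_i$ to a point $m_i^\p\in R_{\alpha,c}$ by a curve of $g$-length $O(D^{(1+\alpha)/2})=o(D)$. Via the embedding $\iota$, the $m_i^\p$ become points in $E$ at $g^\p$-distance $\sim D$ from the origin, by Proposition \ref{proposition rho prime is proportional to cone radius}. Inside the flat manifold $E$ they can be joined by an arc at constant Euclidean distance $\sim D$ from the origin, of $g^\p$-length at most $CD$. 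By Proposition \ref{proposition g-g prime in regular region} the $g$- and $g^\p$-lengths of such an arc agree up to a factor $1+O(D^{-\alpha(n-1)})$ wherever the arc sits in $R_{\alpha,c}$, and by Proposition \ref{proposition rho is comparable to rho prime} the arc lies in a $g$-annulus $A(x_0,C^{-1}D,CD)$. Concatenation produces the required curve of $g$-length $O(D)$.

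For the uniform volume lower bound, item (iii), I would invoke Cheeger--Colding volume continuity. Theorem \ref{theorem asymptotic cone of g} implies that, for $x$ with $\rho(x)=D$ large, the rescaled pointed manifold $(\mathbb{C}^n,D^{-2}g,x)$ is pointed-Gromov-Hausdorff close to $(E/\Lambda^+,x_\infty)$ for a point $x_\infty$ at distance $\sim 1$ from the apex; since $g$ is Ricci-flat and non-collapsed (the volume exponent $2n-1$ in Proposition \ref{proposition volume growth of g} matches the Hausdorff dimension of the limiting cone), Cheeger--Colding volume continuity gives convergence of the volumes of concentric balls. As $E/\Lambda^+$ is a metric cone of Hausdorff dimension $2n-1$, the volume of a ball of radius $1-C^{-1}$ is uniformly bounded below by a positive constant independent of $x_\infty$; unscaling yields $\op{Vol}_g(B_g(x,(1-C^{-1})D))\geq C^{-1}D^{2n-1}$.

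The main obstacle is arranging, in item (i), that the ``spherical arc'' in $E$ connecting $m_1^\p$ and $m_2^\p$ lies in $\iota(R_{\alpha,c})$ throughout its length, so that Proposition \ref{proposition g-g prime in regular region} can be applied along the entire arc; if the arc happens to cross (the image of) $S_{\alpha,c}$ then Proposition 4.2 fails there. The fix is that by Lemma \ref{lemma singular region looks like a line} the region $S_{\alpha,c}$ is essentially a half-line, so the bad part of the arc can either be deformed away using the Killing $\mathbb{R}$-action generated by $K_1$ or replaced by a short detour through $R_{\alpha,c}$ produced by Lemma \ref{lemma connecting singular to regular}, at total additional length $o(D)$.
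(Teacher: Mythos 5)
Your treatment of items (i), (ii) and (iv) of Definition \ref{definition of SOB} is essentially the paper's argument: (ii) and (iv) come for free from Proposition \ref{proposition volume growth of g} and Ricci-flatness, and (i) is handled, as in the paper, by pushing points of $S_{\alpha,c}$ into $R_{\alpha,c}$ via Lemma \ref{lemma connecting singular to regular} and then using the closeness to the flat metric of $E$ on $R_{\alpha,c}$ (Proposition \ref{proposition g-g prime in regular region}); your extra care about the connecting arc in $E$ possibly re-entering $S_{\alpha,c}$ is a reasonable elaboration of a step the paper leaves implicit.

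The genuine problem is item (iii). Your appeal to Cheeger--Colding volume continuity rests on the claim that the situation is non-collapsed because ``the volume exponent $2n-1$ matches the Hausdorff dimension of the limiting cone.'' That is not what non-collapse means: non-collapse requires Euclidean-order volume growth, i.e.\ exponent $2n$. Here $\op{Vol}_g(B(x_0,R))\sim R^{2n-1}$, so after rescaling by $D^{-2}$ the unit-ball volumes are of order $D^{-1}\to 0$; the convergence to the $(2n-1)$-dimensional cone $E/\Lambda^+$ is a \emph{collapsed} limit. Cheeger--Colding volume continuity then only says that the rescaled $2n$-dimensional volumes converge to $\mathcal{H}^{2n}$ of balls in the cone, which is $0=0$ and gives no lower bound; the $(2n-1)$-dimensional measure of cone balls, which is what you invoke, is not related to $\op{Vol}_g$ by any general theorem at this level of generality (one would need the circle-fibration structure of the collapse, which is precisely extra information). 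So as written the deduction $\op{Vol}_g(B_g(x,(1-C^{-1})D))\geq C^{-1}D^{2n-1}$ does not follow. The fix is much more elementary: either argue as the paper does, using Proposition \ref{proposition g-g prime in regular region} to compare volume forms of $g$ and the flat $g^\p$ on $R_{\alpha,c}$ (and, for $x\in S_{\alpha,c}$, replace $x$ by a nearby point $x^\p\in R_{\alpha,c}$ furnished by Lemma \ref{lemma connecting singular to regular}, noting $B(x^\p,\tfrac12\rho(x))\subset B(x,(1-\tfrac1C)\rho(x))$), or simply use Bishop--Gromov with $\op{Ric}\geq 0$ together with the global lower volume bound at $x_0$, which is exactly the argument of Proposition \ref{proposition SOB general}: $\op{Vol}(B(x,(1-\tfrac1C)\rho(x)))\geq c\,\op{Vol}(B(x,2\rho(x)))\geq c\,\op{Vol}(B(x_0,\rho(x)))\geq c'\rho(x)^{2n-1}$.
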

\begin{proof}
    We will take $x_0$ to be the origin of $\mathbb{C}^n$, so $r(x) = \rho(x)$.

    We start with the first condition in Definition \ref{definition of SOB}. Fix $0<\alpha < 1$ and $c>0$, if both $m_1,m_2$ are in $R_{\alpha,c}$, then we can apply Proposition \ref{proposition g-g prime in regular region} and the fact that the Euclidean metric of $E$ satisfies this condition. If one of $m_1,m_2$ is in $S_{\alpha,c}$, then by Lemma \ref{lemma connecting singular to regular}, it can be conected to $R_{\alpha,c}$ by a curve of length much shorter than $D$ since $\frac{1+\alpha}{2} < 1$.

    The second condition is a consequence of Proposition \ref{proposition volume growth of g}.

    As for the third condition, if $x\in R_{\alpha,c}$, then again we can apply Proposition \ref{proposition g-g prime in regular region} to prove the lower bound of volume growth. If $x\in S_{\alpha,c}$, then by Lemma \ref{lemma connecting singular to regular} again, there is a point $x^\p$ in $R_{\alpha,c}$ that lies in $B(x,C\rho(x)^{\frac{1+\alpha}{2}})$. If $C$ is sufficiently large, large and $\rho(x)>C$, then $B(x^\p,\frac{1}{2}\rho(x)) \subset B(x,(1-\frac{1}{C})r(x))$, hence we will get a lower bound of volume growth.

    Finally, since $a=0$, the metric is Ricci-flat. This proves the last condition.
\end{proof}
The above proof is based on the study of the metric of Apostolov and Cifarelli. In fact, we have the following general result.

\begin{proposition}\label{proposition SOB general}
    Let $(M,g)$ be a connected complete nonncompact Riemannian manifold of dimension $n$ with nonnegative Ricci curvature. Let $x_0 \in M$ and assume that there exist $A,B,\beta>0$ such that $Ar^\beta \leq Vol(B(x_0,r)) \leq Br^\beta$ for any $r\geq 1$. Then $(M,g)$ is $SOB(\beta)$.
\end{proposition}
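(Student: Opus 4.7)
The plan is to verify the four conditions of Definition \ref{definition of SOB} in sequence. Conditions 2 and 4 are immediate from the hypotheses: condition 2 is the assumed upper bound $\op{Vol}(B(x_0,s)) \leq Bs^\beta$, and condition 4 follows from $\op{Ric}\geq 0 \geq -Cr(x)^{-2}$. Only conditions 3 and 1 require genuine argument.

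For condition 3, I would use the Bishop-Gromov inequality at the point $x$. Since $B(x_0, r(x)) \subset B(x, 2r(x))$ by the triangle inequality, the hypothesis gives $\op{Vol}(B(x, 2r(x))) \geq A r(x)^\beta$. Bishop-Gromov (valid under $\op{Ric}\geq 0$) states that $s \mapsto \op{Vol}(B(x,s))/s^n$ is non-increasing, so
\begin{align*}
\op{Vol}\bigl(B(x, (1-1/C)r(x))\bigr) \geq \left(\tfrac{1-1/C}{2}\right)^n \op{Vol}(B(x, 2r(x))) \geq \left(\tfrac{1-1/C}{2}\right)^n A\, r(x)^\beta,
\end{align*}
which is exactly condition 3 after possibly enlarging $C$.

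The main obstacle is condition 1, the annular path-connectedness with linear length bound. My preferred route is via Cheeger-Colding's structure theorem for tangent cones at infinity: under $\op{Ric}\geq 0$ together with two-sided volume growth of order $\beta$, any pointed Gromov-Hausdorff limit $(C(Y), o)$ of $(M, x_0, D_i^{-2}g)$ as $D_i \to \infty$ is a metric cone over a compact cross-section $Y$ whose diameter is at most $\pi$ (since $Y$ inherits an $\op{Ric}\geq n-2$ structure in the Ricci-limit sense, so Bonnet-Myers applies). In such a cone, the annulus $A(o, 1/C, C)$ is path-connected with annular diameter bounded by $2(C-1/C) + \pi/C$, uniform in the choice of subsequence. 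Transferring a curve in this annulus back to $M$ via the Gromov-Hausdorff approximation furnishes, for every sufficiently large $D$, a curve in $A(x_0, D/C, CD)\subset M$ of length $O(D)$ joining any two given points $m_1, m_2$ at distance $D$ from $x_0$. A more elementary alternative uses the Cheeger-Gromoll splitting theorem iteratively to reduce $M$ to $N \times \mathbb{R}^k$ with $N$ one-ended or compact (the two-sided volume bound terminates the iteration), then checks that the unbounded component of $M \setminus B(x_0, D/C)$ is unique and uses condition 3 together with the upper volume bound in a ball-packing argument to produce the curve of length at most $CD$.

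The hard part is the length bound rather than mere connectedness. Connectedness of the annulus follows from uniqueness of the end at infinity, but producing a linear-in-$D$ length estimate requires quantitative geometric control. The Cheeger-Colding route is cleanest because the metric cone records both the annular geometry and the $O(D)$ scaling simultaneously.
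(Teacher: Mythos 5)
Your treatment of conditions 2, 3 and 4 of Definition \ref{definition of SOB} is exactly the paper's: conditions 2 and 4 are immediate, and condition 3 follows from Bishop--Gromov at $x$ together with $B(x_0,r(x))\subset B(x,2r(x))$, with the same constants. The gap is in condition 1 (relatively connected annuli). The structure theorem you invoke --- that under $\op{Ric}\geq 0$ and two-sided volume growth of order $\beta$ every tangent cone at infinity is a metric cone over a compact cross-section of diameter at most $\pi$ --- is a Cheeger--Colding theorem only under \emph{maximal} (Euclidean) volume growth, i.e.\ $\beta$ equal to the dimension. Here $\beta$ is allowed to be, and in the paper's application is, strictly smaller than the dimension ($\beta=2n-1$ in real dimension $2n$), so the manifold collapses at infinity: tangent cones at infinity need not be metric cones, need not be unique, and the cross-section carries no synthetic Ricci bound to which Bonnet--Myers applies. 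Even when the limit is a cone its annuli can be disconnected: the flat cylinder $\mathbb{R}\times\mathbb{T}^{n-1}$ satisfies the hypotheses with $\beta=1$ (its tangent cone at infinity is $\mathbb{R}$, a cone over two points), and RCA genuinely fails for it. This shows that any correct proof of condition 1 must use the growth exponent quantitatively (one needs $\beta>1$), whereas your argument never uses the value of $\beta$ at all --- a structural sign it cannot close. Your fallback via iterated Cheeger--Gromoll splitting is also unavailable ($M$ with $\op{Ric}\geq 0$ need not contain a line), and the ball-packing sketch does not produce the linear-in-$D$ length bound, which, as you say yourself, is the actual difficulty; the transfer of curves through Gromov--Hausdorff approximations would also need a chain-plus-geodesics argument, though that is minor by comparison.

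The paper's route avoids limit spaces entirely and is where the quantitative input enters: Bishop--Gromov gives the doubling property $\op{Vol}(B(x,2t))\leq 2^n\op{Vol}(B(x,t))$; Buser's inequality gives the $L^p$ Poincar\'e inequality under $\op{Ric}\geq 0$; and Minerbe's Proposition 2.8, which takes doubling, Poincar\'e and the volume growth hypothesis as input, then yields exactly the RCA condition with the length bound $CD$. (This is also where the restriction $\beta>1$ is encoded; it is harmless in the application, where $\beta=2n-1\geq 3$.) To repair your argument you would need to replace the asymptotic-cone structure theorem by a fixed-scale quantitative argument of this kind.
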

\begin{proof}
    By assumption, the conditions concerning the Ricci curvature and upper bound of the volume of balls are satisfied. If $C\geq 2$, then by Bishop-Gromov inequality, for any $x\in M$ with $r(x)\geq C$ we have
    \begin{align}
        Vol(B(x,(1-\frac{1}{C})r(x))) &\geq (\frac{1-\frac{1}{C}}{2})^nVol(B(x,2r(x))) \\
        &\geq \frac{1}{4^n}Vol(B(x_0,r(x)))\\
        &\geq \frac{A}{4^n}r(x)^\beta.
    \end{align}
    This proves the condition concerning the lower bound of the volume of balls.

    Finally, to prove the relatively connected annuli (RCA) condition, we will apply a result of Minerbe. By Bishop-Gromov inequality again, we know that $Vol(B(x,2t)) \leq 2^nVol(B(x,t))$ for any $x\in M$ and $t>0$. The result of \cite{buser1982} establishes the $L^p$ Poincar\'e inequality for any $1\leq p <+\infty$. Now we can apply \cite[Proposition 2.8]{Minerbe2009} to show the RCA condition.
\end{proof}
Consequently, for any $l\geq 2$ and any choice of $d_j$, as long as $a=0$, the Calabi-Yau metric of Taub-NUT type on $\mathbb{C}^n$ constructed by Apostolov and Cifarelli is $SOB(2n-1)$.

\subsection{A K\"ahler potential}

\begin{proposition}\label{proposition kahler potential of g}
For any constant $C$, the following function defines a K\"ahler potential of $g$:
\begin{align}\label{formula of potential of g}
H = \frac{1}{2}\xi_1^2 - \xi_1 + \frac{1}{2}\xi_2^2-\xi_2 + \int_0^{\xi_2}\frac{1}{1+t+\dots + t^{n-2}}dt + C.
\end{align}
More precisely, we have $dd^cH=\omega$.
\end{proposition}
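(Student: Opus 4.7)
The plan is to compute $dH$ explicitly, apply $J$ to get $d^cH$ in the basis $(\theta_1,\theta_2)$ via the formulas for $Jd\xi_1,Jd\xi_2$ recalled in Section \ref{section The steady gradient Kahler-Ricci soliton}, and then differentiate using $d\theta_1=0$ and $d\theta_2=-\check{\omega}_1$ from \eqref{theta1}--\eqref{theta2}.

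The first step is the geometric series identity $\frac{1}{1+\xi_2+\dots+\xi_2^{n-2}} = \frac{\xi_2-1}{\xi_2^{n-1}-1}$, which rewrites
\[
dH = (\xi_1-1)\,d\xi_1 + (\xi_2-1)\frac{\xi_2^{n-1}}{\xi_2^{n-1}-1}\,d\xi_2.
\]
The coefficient of $d\xi_2$ is designed precisely to cancel the factor $\frac{\xi_2^{n-1}-1}{\xi_2^{n-2}}$ appearing in $Jd\xi_2$; this is the geometric reason for the particular integrand chosen in $H$.

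Specializing the formulas for $J$ to $l=2, a=0$ yields
\[
Jd\xi_1 = \frac{\xi_1}{\xi_1-\xi_2}(\theta_1+\xi_2\theta_2), \qquad
Jd\xi_2 = \frac{\xi_2^{n-1}-1}{\xi_2^{n-2}(\xi_2-\xi_1)}(\theta_1+\xi_1\theta_2).
\]
Substituting into $d^cH = (dH)\circ J$ and placing everything over the common denominator $\xi_2-\xi_1$, the $\theta_1$-coefficient becomes
\[
\frac{-\xi_1(\xi_1-1)+\xi_2(\xi_2-1)}{\xi_2-\xi_1},
\]
which simplifies to $\sigma_1-1$ after the factorization $\xi_2^2-\xi_1^2-(\xi_2-\xi_1) = (\xi_2-\xi_1)(\xi_1+\xi_2-1)$. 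A parallel computation gives $\sigma_2=\xi_1\xi_2$ as the $\theta_2$-coefficient, so
\[
d^cH = (\sigma_1-1)\theta_1 + \sigma_2\theta_2.
\]

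Since $d\theta_1=0$, taking the exterior derivative produces
\begin{align*}
dd^cH &= d\sigma_1\wedge\theta_1 + d\sigma_2\wedge\theta_2 + \sigma_2\,d\theta_2 \\
&= d\sigma_1\wedge\theta_1 + d\sigma_2\wedge\theta_2 - \xi_1\xi_2\,\check{\omega}_1,
\end{align*}
which is exactly $\omega$ by \eqref{end of formulas of Apostolov-Cifarelli}. The identity is established on the dense open set $M^0$ and extends to all of $\mathbb{C}^n$ by continuity, since $H$ is smooth in $\xi_1,\xi_2$ (and the integrand $\frac{1}{1+t+\dots+t^{n-2}}$ is smooth on $[0,\infty)$). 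The main obstacle is really just the polynomial cleanup yielding the compact form of $d^cH$; once the integrand of $H$ is recognized as the factor that linearizes $Jd\xi_2$, the identity collapses quickly.
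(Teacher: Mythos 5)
Your verification is correct, and it is a genuinely different argument from the one in the paper: there the statement is simply deduced from the general K\"ahler potential formula of \cite[Theorem 1, formulas (65) and (70)]{ACGI} for metrics admitting a hamiltonian $2$-form (one takes their potential and subtracts a pluriharmonic function), whereas you verify the identity directly. Your route specializes $Jd\xi_1, Jd\xi_2$ to $l=2$, $a=0$ (your formulas agree with the general ones, since $F_1(\xi_1)/p_c(\xi_1)=\xi_1$ and $F_2(\xi_2)/p_c(\xi_2)=(\xi_2^{n-1}-1)/\xi_2^{n-2}$), uses the geometric series identity to rewrite the $d\xi_2$-coefficient of $dH$ as $(\xi_2-1)\xi_2^{n-1}/(\xi_2^{n-1}-1)$, and the cancellations indeed give $d^cH=(\sigma_1-1)\theta_1+\sigma_2\theta_2$, hence $dd^cH=d\sigma_1\wedge\theta_1+d\sigma_2\wedge\theta_2-\xi_1\xi_2\check\omega_1=\omega$ on $M^0$, extending to $\mathbb{C}^n$ by density since $\xi_1,\xi_2$ (hence $H$) are smooth there and the integrand is smooth on $[0,\xi_2]$. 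What your computation buys is a self-contained check that also explains the role of the integrand $\frac{1}{1+t+\dots+t^{n-2}}$ (it exactly linearizes $Jd\xi_2$), while the paper's citation is shorter but requires unpacking the conventions of \cite{ACGI}. One small point to make explicit: your step ``$d^cH=(dH)\circ J$'' should be read as applying the paper's action of $J$ on $1$-forms to $dH$, i.e.\ $d^cH=J\,dH$ in the convention of \cite{ACGI}; with the opposite sign convention you would land on $-\omega$, so it is worth stating the convention once, after which the positive sign is forced by $\omega$ being the K\"ahler form of $g$.
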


The last proposition is deduced from the formula of K"ahler potential for K"ahler metric that admits Hamiltonian 2-forms given in \cite[Theorem 1]{ACGI}. In fact, one starts with the K\"ahler potential given by formula (65) of \cite{ACGI} and then subtracts it by a pluriharmonic function $u_1$ given by formula (70) of \cite{ACGI}.

\begin{proposition}\label{proposition properties of potential}
By fixing a large enough constant $C$, the potential $H$ given by \eqref{formula of potential of g} satisfies the following properties:
\begin{itemize}
  \item The function $H$ is positive;
  \item The function $H$ is comparable to $\rho^2$ outside a compact set;
  \item The function $H$ satisfies $dH \wedge d^cH \leq C^\p Hdd^cH$ for some $C^\p > 0$.
\end{itemize}
\end{proposition}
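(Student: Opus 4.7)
The approach is to handle the three properties of $H$ in order, leaning on the explicit formula \eqref{formula of potential of g}, the facts that $\xi_1 \le 0 \le 1 \le \xi_2$ on the closure of $M^0$, and the comparability $\rho \sim \xi_2 - \xi_1$ from Proposition~\ref{proposition rho is comparable to xi_l-xi_1}. The first two bullets will be short.

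For positivity, I would observe that on the closure of $M^0$ each summand in \eqref{formula of potential of g} other than $C$ is bounded below by a universal constant: $\tfrac{1}{2}\xi_1^2 - \xi_1 \ge 0$ because $\xi_1 \le 0$; $\tfrac{1}{2}\xi_2^2 - \xi_2 \ge -\tfrac{1}{2}$; and the integrand $(1 + t + \cdots + t^{n-2})^{-1}$ is positive for $t \ge 0$. Thus choosing $C$ large enough gives $H > 0$ on $M^0$, and this extends to the smooth extension of $H$ to $\mathbb{C}^n$ since $\xi_1,\xi_2$ extend smoothly. For comparability with $\rho^2$, the sign condition $\xi_1 \le 0 \le \xi_2$ gives $-2\xi_1\xi_2 \ge 0$, hence $\xi_1^2 + \xi_2^2 \le (\xi_2-\xi_1)^2 \le 2(\xi_1^2 + \xi_2^2)$. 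All other terms of $H$ are of strictly lower order (the linear terms grow like $|\xi_1| + \xi_2$, and the integral grows linearly when $n = 2$, logarithmically when $n=3$, and is bounded when $n \ge 4$), so $H \sim \xi_1^2 + \xi_2^2 \sim (\xi_2-\xi_1)^2 \sim \rho^2$ outside a compact set.

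The third bullet is the substantive one. I would first reduce the $(1,1)$-form inequality $dH \wedge d^cH \le C'H\,dd^cH$ to the scalar bound $|dH|_g^2 \le C''H$. Since $dH \wedge d^cH = 2i\,\partial H \wedge \bar\partial H$ corresponds to the rank-one Hermitian matrix $H_i\overline{H_j}$, and $dd^cH = \omega$ to the positive-definite Hermitian matrix $g_{i\bar j}$, the matrix inequality $(H_i\overline{H_j}) \le C'H\,(g_{i\bar j})$ is equivalent to its unique nonzero generalized eigenvalue $|\partial H|_g^2 = g^{i\bar j}H_i \overline{H_j}$ being bounded by $C'H$, which for real $H$ is the same as $|dH|_g^2 \le C''H$. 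Second, I would exploit the block structure of \eqref{metric of Apostolov-Cifarelli}: the one-forms $d\xi_1, d\xi_2$ are mutually orthogonal and orthogonal to the remaining $\theta$-directions, so
\[
|dH|_g^2 = \frac{-\xi_1}{\xi_2-\xi_1}(\xi_1 - 1)^2 + \frac{\xi_2^{n-1}-1}{\xi_2^{n-2}(\xi_2-\xi_1)}\bigl(\xi_2 - 1 + \tfrac{1}{1+\xi_2+\cdots+\xi_2^{n-2}}\bigr)^2.
\]
For large $\rho$, using $(\xi_2^{n-1}-1)/\xi_2^{n-2} \le C\xi_2$ for $\xi_2 \ge 2$ and splitting into the regimes $|\xi_1| \gtrsim \xi_2$ and $|\xi_1| \lesssim \xi_2$, one checks that each summand is bounded by $C(\xi_1^2 + \xi_2^2)$, which is $O(H)$ by the second bullet. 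On the compact complement, $|dH|_g^2/H$ is a continuous function (using that $H>0$), hence bounded.

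The main obstacle is the second step of the third bullet: the coefficients $-\xi_1/(\xi_2-\xi_1)$ and $(\xi_2^{n-1}-1)/(\xi_2^{n-2}(\xi_2-\xi_1))$ scale differently, so one has to verify carefully that each product $g^{\xi_i\xi_i}(H_{\xi_i})^2$ is controlled by $\xi_1^2 + \xi_2^2$ uniformly across the asymptotic regimes where $|\xi_1|$ dominates, $\xi_2$ dominates, or the two are comparable. Everything else — positivity, the $\rho^2$ comparison, and the reduction from forms to scalars — is essentially bookkeeping once the sign conditions on $\xi_1,\xi_2$ are in hand.
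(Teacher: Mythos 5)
Your proposal is correct and follows essentially the same route as the paper: positivity and the comparison $H \sim \xi_1^2+\xi_2^2 \sim \rho^2$ by direct estimates using $\xi_1\le 0\le 1\le \xi_2$ and $\rho\sim\xi_2-\xi_1$, and the third bullet by reducing to the scalar bound $|dH|_g^2\le C''H$, which is just the dual formulation of the paper's $(dH)^2\le C'Hg$. The case analysis you flag as the main obstacle is actually unnecessary: since $\xi_2-\xi_1\ge\max(-\xi_1,\xi_2)$, both coefficients $\frac{-\xi_1}{\xi_2-\xi_1}$ and $\frac{\xi_2^{n-1}-1}{\xi_2^{n-2}(\xi_2-\xi_1)}$ are at most $1$, which is exactly the paper's observation that the coefficients of $(d\xi_i)^2$ in $g$ are bounded below by $1$, so each summand is immediately $O(\xi_1^2+\xi_2^2)=O(H)$.
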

\begin{proof}
We choose $C >0$ such that $\frac{1}{2}x^2 - x + \frac{1}{2}C > \frac{1}{4}x^2$ for any $x\in \mathbb{R}$, then
\begin{align}
H &= (\frac{1}{2}\xi_1^2 -\xi_1 + \frac{1}{2}C) + (\frac{1}{2}\xi_2^2 -\xi_2 + \frac{1}{2}C) + \int_0^{\xi_2}\frac{1}{1+t+\dots + t^{n-2}}dt \\
  &> \frac{1}{4}(\xi_1^2 + \xi_2^2 ) \geq \frac{1}{4}.
\end{align}
Regarding the second property, recall Proposition \ref{proposition rho is comparable to xi_l-xi_1} that outside a compact set $\rho^2$ is proportional to $(\xi_2 - \xi_1)^2$, which is in turn proportional to $\xi_1^2 + \xi_2^2$ since $\xi_1\leq 0$, $\xi_2\geq 1$. Note that the integration in the formula of $H$ is bounded from above by $\xi_2$, we have
\begin{align}
H &\leq \frac{1}{2}\xi_1^2 - \xi_1 + \frac{1}{2}\xi_2^2 + C \\
  &\leq \frac{1}{2}\xi_1^2 + \frac{1}{2}\xi_1^2 + \frac{1}{2} + \frac{1}{2}\xi_2^2 + C \\
  &\leq \xi_1^2 + \xi_2^2 + C + \frac{1}{2} \\
  &\leq (C+\frac{3}{2})(\xi_1^2 + \xi_2^2).
\end{align}
Thus, $H$ is comparable to $\rho^2$.

Finally, to prove the last inequality, it suffices to show that $(dH)^2 \leq C^\p H g$ for some $C^\p>0$. Now $dH = (\xi_1-1)d\xi_1 + \frac{\xi_2^{n-1}}{\xi_2^{n-2} + \dots + \xi_2 + 1}d\xi_2$. Observing that the coefficients of $d\xi_i$ in $dH$ are bounded from above by a multiple of $\sqrt{H}$, while the coefficients of $(d\xi_i)^2$ of $g$ are bounded below by $1$, we get $(dH)^2 \leq C^\p H g$ for some $C^\p>0$.
\end{proof}

\subsection{Estimation of the Riemannian curvature}
The aim of this subsection is to prove the following proposition:

\begin{proposition}\label{proposition better estimation of Rm of g}
There exists $C > 0$ such that for any $x\in \mathbb{C}^n$, we have $|\op{Rm}_g(x)| \leq \frac{C}{1 + \rho(x)}$.
\end{proposition}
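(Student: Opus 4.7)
The plan is to apply the $\epsilon$-regularity theorem of Naber and Zhang for Ricci-flat manifolds, using the closeness of $g$ to the locally flat metric $g^\p$ from Section \ref{section The locally flat metric}. Set $R = 1 + \rho(x)$ and $\tilde g = R^{-1}g$. Under this rescaling the Riemann tensor norm transforms by $|\op{Rm}_{\tilde g}| = R\,|\op{Rm}_g|$, so the desired bound $|\op{Rm}_g|(x) \leq C/R$ is equivalent to the uniform estimate $|\op{Rm}_{\tilde g}|(x) \leq C$. Since $a=0$ forces $g$ to be Ricci-flat, the Naber--Zhang $\epsilon$-regularity theorem reduces the proof to checking that, given $\epsilon > 0$, for all $R$ sufficiently large the unit ball $B_{\tilde g}(x, 1)$ is $\epsilon$-close in pointed Gromov--Hausdorff distance to a unit ball in a flat model (possibly collapsed along a torus factor), together with the appropriate non-collapsing condition on its local universal cover.

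Fix $\alpha \in (1/2, 1)$ and $c > 0$, and split into cases based on whether $x$ lies in $R_{\alpha,c}$ or $S_{\alpha,c}$. If $x \in R_{\alpha, c}$, then because $\sqrt R \ll R^\alpha$ for large $R$, the $g$-ball $B_g(x, \sqrt R) = B_{\tilde g}(x, 1)$ stays inside a slightly enlarged region $R_{\alpha, c'}$, and Proposition \ref{proposition g-g prime in regular region} gives $|g - g^\p|_g = O(R^{-\alpha(n-1)})$ on this ball. Rescaling yields $|\tilde g - \tilde g^\p|_{\tilde g} \to 0$. By Proposition \ref{proposition isometric embedding of g prime} and Proposition \ref{proposition asymptotic cone of g prime}, $\tilde g^\p$ is a rescaling of a flat metric on $E$ whose asymptotic cone is the flat orbifold $E/\Lambda^+$, so $B_{\tilde g}(x, 1)$ becomes Gromov--Hausdorff close to a unit ball in $E/\Lambda^+$; the non-collapsing on the local universal cover is provided by lifting to $E$, where the metric is Euclidean.

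For $x \in S_{\alpha, c}$, where $\xi_l$ is close to $1$ and $g^\p$ does not extend smoothly across the axis $\{\xi_l = 1\}$, the comparison with $g^\p$ is unavailable. Instead one works directly with the smooth toric extension of $g$ described in Section \ref{section The steady gradient Kahler-Ricci soliton}, and uses that $-\xi_1 \sim R$ throughout this region. After rescaling by $R^{-1}$, the Fubini--Study and $\xi_1$-directions become approximately flat of diameter $O(1)$, while the smoothing coordinate $u = \sqrt{\xi_2 - 1}$ presents the metric near the axis as an approximately flat disk factor of bounded $\tilde g$-size. A direct local analysis using \eqref{metric of Apostolov-Cifarelli}--\eqref{end of g} verifies the required Gromov--Hausdorff closeness to a flat (orbifold) model at scale $1$ in $\tilde g$, after which Naber--Zhang again delivers the uniform pointwise bound.

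The main obstacle is the singular region $S_{\alpha, c}$: the robust comparison with $g^\p$ that controls $R_{\alpha, c}$ is not available there, and one must instead produce the flat model by an explicit computation with the smooth extension of $g$ near $\{\xi_l = 1\}$, keeping careful track of which directions collapse and which approach Euclidean after the $R^{-1}$ rescaling. Once this is accomplished, combining both cases yields the uniform estimate $|\op{Rm}_{\tilde g}|(x) \leq C$, equivalently $|\op{Rm}_g|(x) \leq C/(1 + \rho(x))$.
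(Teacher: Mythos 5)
Your overall strategy---Ricci-flatness plus the Naber--Zhang $\epsilon$-regularity theorem, with a case split between $R_{\alpha,c}$ and $S_{\alpha,c}$---is the same as the paper's, but the way you fix the scale breaks down exactly where the work is. You rescale by $R^{-1}=(1+\rho(x))^{-1}$ and claim that for large $R$ the ball $B_{\tilde g}(x,1)=B_g(x,\sqrt R)$ is $\epsilon$-close in Gromov--Hausdorff distance to a ball in a flat model for every $\epsilon$. In the singular region this is false: on $B_g(x,\sqrt R)$ the coordinate $u=\sqrt{\xi_2-1}$ varies by $O(1)$ rather than $o(1)$, $\xi_2$ can stay bounded, and the rescaled Fubini--Study term $R^{-1}(-\xi_1\xi_2)\check g_1\sim \xi_2\,\check g_1$ is a $\mathbb{CP}^{n-2}$ of unit size with curvature of order one. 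A ball containing such a factor is not $\delta$-close to $B_2(0^k)\subset\mathbb{R}^k$ for the small $\delta$ demanded by Theorem \ref{theorem simple version of Naber-Zhang}, nor of the form $\mathbb{R}^{k-l}\times Z^l$ with $l\leq 3$ once $n\geq 4$. This is precisely why the paper does not rescale by $1/\rho_i$ but by $\lambda_i^2=m_i^2/\rho_i$ with $m_i\to\infty$ slowly, and runs a contradiction argument with $C_i/m_i^2\to\infty$ to still extract the $1/\rho$ decay: at that smaller ball scale the Fubini--Study factor becomes almost flat and $u,\xi_1,\xi_2$ are nearly constant on the ball, so a genuine Euclidean (or collapsed Euclidean) model appears. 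A direct version of your argument could be rescued by rescaling with $K/\rho$ for a sufficiently large fixed constant $K$, but as written the scale $1/R$ does not verify the hypothesis.

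Second, invoking a ``flat (orbifold) model'' is not something the theorem permits, and this elides the actual crux near the axis. In the collapsed subcase one must (i) identify the model as $\mathbb{R}^{2n-3}\times((\mathbb{R}^2\times\mathbb{S}^1)/\mathbb{Z}_{n-1})$, (ii) check that the relevant two-dimensional cone is smooth with angle $2\pi$ because $T_2$ is the primitive generator of an $\mathbb{S}^1$-action (this uses \eqref{theta1}--\eqref{theta2}), and (iii) verify that after passing to the local cover the ball is $\delta$-close to $B_2(0^{2n-1})$ with $\Gamma_\delta(\tilde x_i)=\mathbb{Z}$, i.e.\ $\op{rank}(\Gamma_\delta)=1=2n-(2n-1)$, so that the equality case of Theorem \ref{theorem simple version of Naber-Zhang} yields the conjugate radius and hence curvature bound; without the rank condition the theorem gives nothing in the collapsed case. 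None of this appears in your proposal. Two smaller points: your non-collapsing remark in the regular region should lift to the universal cover $\mathbb{R}^{2n}$ rather than to $E$, which is not simply connected; and the singular region is not ``where $\xi_l$ is close to $1$''---it is $\{\xi_2\leq c(\xi_2-\xi_1)^\alpha\}$, where $\xi_2$ may be as large as $c\rho^\alpha$, so the two subcases (away from and near the axis, i.e.\ $u$ bounded below versus $u\to 0$) must be treated separately as in the paper.
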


We will prove this with the help of the following weaker version of the result of \cite[Theorem 1.1]{Naber-Zhang}:
\begin{theorem}\label{theorem simple version of Naber-Zhang}
Let $p$ be a point of the Riemannian manifold $(M,g)$ of dimension $n$, assume that $B_2(p)$ has a compact closure in $B_4(p)$. Assume that $(M^n, g, p)$ satisfies $|\op{Ric}|\leq n-1$, then there exist positive constants $\delta, w_0, c_0$ depending only on $n$, such that if
\begin{align}
d_{GH}(B_2(p), B_2(0^k))<\delta,
\end{align}
where $0^k\in\mathbb{R}^k$ is the origin of the standard Euclidean Riemannian manifold $\mathbb{R}^k$ ($0\leq k\leq n$), then $\Gamma_\delta(p) = \op{Image}[\pi_1(B_\delta(p))\rightarrow \pi_1(B_2(p))]$ is $(w_0,n-k)$-nilpotent with $\op{rank}(\Gamma_\delta(p))\leq n-k$, and if equality holds then for each $q\in B_1(p)$ we have the conjugate radius bound
\begin{align}
\op{ConjRad}(q) \geq c_0 >0.
\end{align}
In particular, if $M^n$ is Einstein, then we have
\begin{align}
\sup_{B_1(p)}|\op{Rm}| \leq C(n).
\end{align}
\end{theorem}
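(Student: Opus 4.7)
The plan is to follow the program developed by Cheeger--Colding, Kapovitch--Wilking, and Cheeger--Naber, adapted to the collapsed setting with two-sided Ricci bounds. The three conclusions of the theorem are proved in sequence: the nilpotent structure of $\Gamma_\delta(p)$, the conjugate radius lower bound in the rank-equality case, and finally the curvature bound under the Einstein hypothesis.

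To establish that $\Gamma_\delta(p)$ is $(w_0, n-k)$-nilpotent with $\op{rank}(\Gamma_\delta(p)) \leq n-k$, the first step is to invoke the generalized Margulis lemma of Kapovitch--Wilking: under the Ricci lower bound, $\Gamma_\delta(p)$ contains a nilpotent subgroup of index at most $w_0(n)$ and nilpotency class at most $n$. The rank bound then comes from the Cheeger--Colding almost-splitting theorem: the hypothesis $d_{GH}(B_2(p), B_2(0^k)) < \delta$ produces $k$ almost-linear harmonic functions $u_1, \dots, u_k$ on $B_2(p)$ whose differentials are almost orthonormal, and which lift to $\Gamma$-equivariant (up to constants) functions on the local universal cover $\widetilde{B_2(p)}$. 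These lifted functions force the $\Gamma_\delta(p)$-orbit of $\tilde p$ to lie transverse to $k$ independent approximately-Euclidean directions; a dimension count together with the nilpotent structure then yields $\op{rank}(\Gamma_\delta(p)) \leq n-k$.

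In the maximal rank case $\op{rank}(\Gamma_\delta(p)) = n-k$, all of the collapse at scale $\delta$ in $B_\delta(p)$ is accounted for by the action of a full-rank nilpotent subgroup. Consequently the local universal cover $\widetilde{B_\delta(\tilde p)}$ becomes non-collapsed, in the sense that $\op{Vol}(\widetilde{B_r(\tilde p)}) \geq v_0(n) r^n$ for $r$ up to a definite scale and tangent cones at points over $B_1(p)$ split as $\mathbb{R}^n$. By standard comparisons in Ricci-bounded geometry combined with the Cheeger--Colding quantitative splitting/non-collapse estimates, volume non-collapse plus $|\op{Ric}| \leq n-1$ yields a uniform lower bound $\op{ConjRad}(q) \geq c_0(n) > 0$ for $q \in B_1(p)$. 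In the Einstein specialization, apply the $\epsilon$-regularity theorem of Cheeger--Naber (or Anderson's earlier version for Einstein $4$-manifolds, extended via $L^2$ curvature estimates): at any point of definite non-collapse an Einstein manifold satisfies a uniform curvature bound, giving $\sup_{B_1(p)} |\op{Rm}| \leq C(n)$.

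The main obstacle is the first step, and in particular the sharp rank bound $\op{rank}(\Gamma_\delta(p)) \leq n-k$ from a Gromov--Hausdorff closeness hypothesis using only Ricci bounds. Under a sectional curvature bound this would follow from the classical Cheeger--Fukaya--Gromov theory of nilpotent Killing structures, but in the Ricci setting one must invoke the full machinery of the Cheeger--Colding almost-splitting theorem, quantitative tangent cone analysis, and the Kapovitch--Wilking nilpotency arguments---each a substantial theory in its own right. Once this structural result is in hand, Steps two and three are relatively formal applications of well-established non-collapse/$\epsilon$-regularity techniques.
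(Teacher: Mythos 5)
The paper does not prove this statement at all: it is quoted verbatim (in weakened form) from Naber--Zhang \cite[Theorem 1.1]{Naber-Zhang} and used as a black box, so there is no internal proof to compare yours against. Judged on its own terms, your proposal correctly identifies the circle of ideas behind the Naber--Zhang theorem --- the Kapovitch--Wilking generalized Margulis lemma for the $(w_0,n-k)$-nilpotency, the Cheeger--Colding almost-splitting theorem and its lift to the local universal cover for the rank bound $\op{rank}(\Gamma_\delta(p))\leq n-k$, and non-collapsing of the local cover in the maximal-rank case --- but it is an outline of a research program, not a proof, and it materially understates where the difficulty lies.

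The concrete gaps are in your second and third steps, which you describe as ``relatively formal applications of well-established techniques.'' They are not. A lower bound on the conjugate radius is governed by sectional curvature along geodesics (via Jacobi fields), and no amount of ``standard comparisons in Ricci-bounded geometry'' combined with volume non-collapse of the local cover produces such a bound; establishing it is precisely the new $\epsilon$-regularity content of \cite{Naber-Zhang}, obtained through a delicate contradiction/limiting analysis on the non-collapsed local universal cover together with harmonic-radius estimates in the almost-maximal-volume regime. Likewise, the Einstein curvature bound cannot be obtained by directly citing Cheeger--Naber $\epsilon$-regularity at ``points of definite non-collapse,'' because the ball $B_1(p)$ itself is collapsed when $k<n$; one must first prove that maximal rank forces the local universal cover to be non-collapsed and Gromov--Hausdorff close to $B_2(0^n)\subset\mathbb{R}^n$, and only then apply Anderson/Colding-type regularity upstairs and push the estimate back down. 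Your first step also leaves the crucial rank inequality at the level of a heuristic (``forces the orbit to lie transverse to $k$ directions; a dimension count\dots''), whereas this requires the full quantitative splitting and induction-on-scales machinery. In short: the proposal is a reasonable roadmap to the literature, but each of its three steps is itself a substantial theorem, and as written the argument for the conjugate radius bound would fail.
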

%\begin{remark}
%Here we note that the last assertion of the bound of Riemannian curvature for Einstein metric is a consequence of the lower bound of conjugate radius. In fact, a lower bound of the Ricci curvature and a lower bound of conjugate radius implies a bound of Riemannian curvature.
%\end{remark}
\begin{remark}
The result of \cite[Theorem 1.1]{Naber-Zhang} is stronger than the above statement, in fact it allows us to replace $\mathbb{R}^k$ by any product $\mathbb{R}^{k-l}\times Z^l$ for $l\leq 3$, where $Z$ is a Ricci-limit space of dimension $l$ in the sense of \cite{Colding-Naber}, and the constants depend on the ball of radius $2$ of $\mathbb{R}^{k-l}\times Z^l$. Moreover, the statement admits a converse.
\end{remark}
\begin{remark}
We will apply Theorem \ref{theorem simple version of Naber-Zhang} to cases $k=2n$ and $k=2n-1$. Here we note that if a group $\Gamma$ is $(w_0, 1)$-nilpotent with $\op{rank}(\Gamma)\leq 1$, then to prove that $\op{rank}(\Gamma)= 1$, it suffices to show that $\Gamma$ is infinite (see, for example, \cite[Section 2.4.1]{Naber-Zhang}).
\end{remark}
Now, let us give the proof of Proposition \ref{proposition better estimation of Rm of g}.

\begin{proof}[Proof of Proposition \ref{proposition better estimation of Rm of g}:]
We will proceed by contradiction. Suppose that the Riemannian curvature bound fails. Then for any sequence $C_i\rightarrow +\infty$, there exists $x_i\in \mathbb{C}^n$ such that
\begin{align}
|\op{Rm}_g(x_i)|\geq \frac{C_i}{1 + \rho(x_i)}.
\end{align}
By selecting a subsequence, we may assume that $\rho_i = \rho(x_i) \rightarrow +\infty$. We may choose a sequence of real numbers $m_i>0$ such that $m_i\rightarrow +\infty$, $\frac{C_i}{m_i^2}\rightarrow +\infty$ and $\frac{\rho_i}{m_i^2} \rightarrow +\infty$. We define $\lambda_i = m_i\rho_i^{-\frac{1}{2}}$, then $\lambda_i\rightarrow 0$ and $\lambda_i^2\rho_i = m_i^2 \rightarrow +\infty$ as $i\rightarrow +\infty$.

For any $0<\beta <1$ and $c>0$, denote as before $R_{\beta,c} = \{\xi_2 \geq c(\xi_2-\xi_1)^\beta\} \subset \mathbb{C}^n$.

First, we consider the case where $x_i$ lies in the regular region $R_{\beta,c}$. Then by Proposition \ref{proposition isometric embedding of g prime} and Proposition \ref{proposition g-g prime in regular region}, we know that the metric tensor of $B_{\lambda_i^2\tilde{g}}(\tilde{x}_i,2)$ $C^0$-converges to the ball of radius $2$ in $\mathbb{R}^{2n}$, where $\tilde{g}$ is the pull back of $g$ from $((\mathbb{C}^{n-1}\times\mathbb{S}^1)/\mathbb{Z}_{n-1})\times \mathbb{R}$ to its universal covering $\mathbb{R}^{2n}$. So for sufficiently large $i$, we have
\begin{align}
d_{GH}(B_{\lambda_i^2\tilde{g}}(\tilde{x}_i,2), B_{\mathbb{R}^{2n}}(0,2)) < \delta
\end{align}
and $\op{rank}(\Gamma_\delta(\tilde{x}_i)) = 0$. Applying Theorem \ref{theorem simple version of Naber-Zhang}, we have
\begin{align}
|\op{Rm}_{\lambda_i^2g}(x_i)| = |\op{Rm}_{\lambda_i^2\tilde{g}}(\tilde{x}_i)| \leq C(n).
\end{align}

Next, we consider the case where $x_i$ is in the complement of $R_{\beta,c}$. So, by the definition of $S_{\beta,c}$, we have $-\xi_1(x_i)$ comparable to $\rho_i$ and $\xi_2(x_i)\leq C\rho_i^\beta$. Note that the distance $\tilde{\rho}(x_i)$ of $x_i$ measured by $\lambda_i^2g$ is $\lambda_i\rho_i\rightarrow +\infty$, so the ball $B_{\lambda_i^2g}(x_i,2)$ is far from the origin for sufficiently large $i$. Now we have
\begin{align}
\lambda_i^2g = &-\lambda_i^2\xi_1\xi_2\check{g}_1 + \lambda_i^2\frac{\xi_2 - \xi_1}{-\xi_1}(d\xi_1)^2 + \lambda_i^2\frac{\xi_2^{n-2}(\xi_2-\xi_1)}{\xi_2^{n-1}-1}(d\xi_2)^2 +\\
    &+ \lambda_i^2\frac{-\xi_1}{\xi_2-\xi_1}(\theta_1 + \xi_2\theta_2)^2 + \lambda_i^2\frac{\xi_2^{n-1}-1}{\xi_2^{n-2}(\xi_2-\xi_1)}(\theta_1 + \xi_1\theta_2)^2.
\end{align}
Examining the second term of the above formula, and note that $2\frac{\xi_2 - \xi_1}{-\xi_1} \geq 1$, we know that for any $x_i^\p\in B_{\lambda_i^2g}(x_i,2)$, we have
\begin{align}
|\xi_1(x_i^\p) - \xi_1(x_i)| \leq 2 \lambda_i^{-1}.
\end{align}
By the choice of $m_i$, we know that $\lambda_i^{-1}$ is much smaller compared to $\rho_i$, consequently $|\frac{\xi_1(x_i^\p) - \xi_1(x_i)}{\xi_1(x_i)}|\rightarrow 0$.

Introduce a new coordinate $u\geq 0$  by $u^2 = \xi_2 - 1$, then the third term in the formula of $\lambda_i^2g$ can be written as
\begin{align}
\lambda_i^2\frac{\xi_2^{n-2}(\xi_2-\xi_1)}{\xi_2^{n-1}-1}(d\xi_2)^2 = 4\lambda_i^2(\xi_2-\xi_1)\frac{\xi_2^{n-2}}{1+\xi_2+\dots+\xi_2^{n-2}}(du)^2.
\end{align}
The function $\frac{\xi_2^{n-2}}{1+\xi_2+\dots+\xi_2^{n-2}}$ is bounded from above and from below by positive numbers when $\xi_2\geq 1$. And note that $\xi_2-\xi_1$ is comparable to $\rho$. On the ball $B_{\lambda_i^2g}(x_i,2)$, for any $x_i^\p\in B_{\lambda_i^2g}(x_i,2)$, we have $|\rho(x_i^\p) - \rho_i| \leq 2\lambda_i^{-1}$, which is much smaller than $\rho_i$. So we know that for any $x_i^\p\in B_{\lambda_i^2g}(x_i,2)$, we have
\begin{align}
|u(x_i^\p) - u(x_i)| \leq C\lambda_i^{-1}\rho_i^{-\frac{1}{2}} = Cm_i^{-1}\rightarrow 0.
\end{align}
It follows that the function $u$ (hence the function $\xi_2$) is $C^0$-close to a constant function on the ball $B_{\lambda_i^2g}(x_i,2)$. In particular $|\frac{\xi_2(x_i^\p) - \xi_2(x_i)}{\xi_2(x_i)}|\rightarrow 0$. Knowing the above estimations of the range of $\xi_1$ and $\xi_2$ on the ball, we deduce that by replacing $c$ with another constant $c^\p$, we may assume that the ball $B_{\lambda_i^2g}(x_i,2)$ is entirely contained in the singular region $S_{\beta,c^\p}$, and the function $\rho$ is comparable to $\rho_i$ on the ball.

Now we look at the first term of the formula of $\lambda_i^2g$, by the above discussion we see that $-\lambda_i^2\xi_1\xi_2\check{g}_1$ is $C^0$-close to $-\lambda_i^2\xi_1(x_i)\xi_2(x_i)\check{g}_1$ measured by $\lambda_i^2g$. Since $-\lambda_i\xi_1(x_i)\xi_2(x_i) \geq C\lambda_i\rho_i = Cm_i\rightarrow +\infty$, we know that the first term $C^0$ converges to the Euclidean metric $\mathbb{R}^{2n-4}$ (the tangent cone at any point of a smooth manifold is a Euclidean space). Moreover, recall that the formula of $g$ is defined on $M^0 = \mathring{D}\times P$, we know that under the projection $M^0\rightarrow P \rightarrow \mathbb{CP}^{n-2}$, the image of $B_{\lambda_i^2g}(x_i,2)$ is contained in a small ball of radius smaller than $\frac{C}{\sqrt{-\lambda_i^2\xi_1(x_i)\xi_2(x_i)}} \leq \frac{C}{\sqrt{m_i}}\rightarrow 0$ on $\mathbb{CP}^{n-2}$ measured by $\check{g}_1$. Let $U$ be a simply connected subset of $\mathbb{CP}^{n-2}$ containing the image of $B_{\lambda_i^2g}(x_i,2)$ under the projection, then our formula of $g$ can be seen as defined on $\mathring{D}\times P|_U$, here $P|_U$ means the restriction of the $\mathbb{T}^2$-principal fibration to the subset $U$ of the base $\mathbb{CP}^{n-2}$.

Similarly we have
\begin{align}
|\lambda_i^2\frac{\xi_2-\xi_1}{-\xi_1}(d\xi_1)^2 - \lambda_i^2(d\xi_1)^2|_{\lambda_i^2g} \rightarrow 0,
\end{align}
as $i\rightarrow +\infty$. So, the second term of the formula of $\lambda_i^2g$ converges to the Euclidean metric of $\mathbb{R}$.

For the last two terms of the formula of $\lambda_i^2g$, we have
\begin{align}
&\frac{-\xi_1}{\xi_2-\xi_1}(\theta_1 + \xi_2\theta_2)^2 + \frac{\xi_2^{n-1}-1}{\xi_2^{n-2}(\xi_2-\xi_1)}(\theta_1 + \xi_1\theta_2)^2 \\ \label{decomposition of last two terms1}
=& (1 - \frac{1}{\xi_2^{n-2}(\xi_2-\xi_1)})(\theta_1 + \xi_2\theta_2)^2 -2\frac{\xi_2^{n-1}-1}{\xi_2^{n-2}}(\theta_1 + \xi_2\theta_2)\theta_2 + \\ \label{decomposition of last two terms2}&+\frac{\xi_2^{n-1}-1}{\xi_2^{n-2}}(\xi_2-\xi_1)\theta_2^2.
\end{align}
We have $\xi_2^{n-2}(\xi_2-\xi_1)\geq \xi_2-\xi_1 \geq C\rho_i\rightarrow +\infty$, and $\frac{\xi_2^{n-1}-1}{\xi_2^{n-2}}$ is comparable to $\xi_2-1$ which is bounded from above by $C\rho_i^\beta$. It follows that the middle term of \eqref{decomposition of last two terms1}-\eqref{decomposition of last two terms2} is much smaller in comparison to the other two terms as $i\rightarrow +\infty$. So, the $C^0$ difference between $\frac{-\xi_1}{\xi_2-\xi_1}(\theta_1 + \xi_2\theta_2)^2 + \frac{\xi_2^{n-1}-1}{\xi_2^{n-2}(\xi_2-\xi_1)}(\theta_1 + \xi_1\theta_2)^2$ and $(\theta_1 + \xi_2\theta_2)^2 + (\xi_2-\xi_1)\frac{1 + \xi_2 + \dots + \xi_2^{n-2}}{\xi_2^{n-2}}(\xi_2-1)\theta_2^2$ measured by $g$ converges to $0$ on the ball $B_{\lambda_i^2g}(x_i,2)$ as $i\rightarrow +\infty$.

Furthermore, since $U$ is simply connected, we may write $\theta_2 = \theta_2^\p + \zeta$, where $\theta_2$ is defined on $P|_U$ and its restriction to the $\mathbb{T}^2$-fibers is the same as $\theta_2$ and $d\theta_2^\p = 0$, while $\zeta$ is a $1$-form on $\mathbb{CP}^{n-2}$ such that $d\zeta = \hat{\omega}_1$. In fact, we may assume that on the ball $B_{\lambda_i^2g}(x_i,2)$, we have $|\zeta|_{\check{g}_1} \leq \frac{C}{\sqrt{m_i}}$. Here we play the following trick: Let $Z_0,\dots,Z_n$ be the homogeneous coordinates of $\mathbb{CP}^n$, then in the local chart $\{Z_0\neq 0\}$ with local coordinates $z_i = \frac{Z_i}{Z_0}$ ($i = 1,\dots, n$), a K\"ahler potential of the Fubini-Study metric is $\log(|z|^2 + 1)$, where $|z|^2 = |z_1|^2+\dots+|z_n|^2$, and note that $|d^c\log(|z|^2 + 1)| = O(|z|)$ measured by the Fubini-Study metric as $|z|\rightarrow 0$. By the symmetry of $\mathbb{CP}^{n}$, on any small ball of radius $\epsilon>0$ of $\mathbb{CP}^{n}$, there exists a $1$-form $\zeta$ such that $d\zeta$ is the K\"ahler form of the Fubini-Study metric and $|\zeta|\leq C\epsilon$. As a consequence of this trick, if we replace all the $\theta_2$ by $\theta_2^\p$ in the formula of $g$, the difference of the metric caused by this change is $C^0$-small measure by $g$.

In conclusion, on the ball $B_{\lambda_i^2g}(x_i,2)$, as $i\rightarrow+\infty$ the metric $\lambda_i^2g$ is $C^0$-close to
\begin{align}
&-\lambda_i^2\xi_1(x_i)\xi_2(x_i)\check{g}_1 + \lambda_i^2(d\xi_1)^2 + \lambda_i^2(\theta_1 + \xi_2\theta_2^\p)^2 +\\
 &+\lambda_i^2(\xi_2-\xi_1)\left[ 4\frac{\xi_2^{n-2}}{1+\xi_2+\dots+\xi_2^{n-2}}(du)^2 + \right.\\
 &+\left. \frac{1+\xi_2+\dots+\xi_2^{n-2}}{\xi_2^{n-2}}u^2(\theta_2^\p)^2 \right].
\end{align}

Recall that $|\frac{\xi_1(x_i^\p) - \xi_1(x_i)}{\xi_1(x_i)}|\rightarrow 0$ and $|\frac{\xi_2(x_i^\p) - \xi_2(x_i)}{\xi_2(x_i)}|\rightarrow 0$ for any $x_i^\p \in B_{\lambda_i^2g}(x_i,2)$, we may replace $\xi_1,\xi_2$ in the above formula by the constants $\xi_1(x_i),\xi_2(x_i)$. That is to say, on the ball $B_{\lambda_i^2g}(x_i,2)$, as $i\rightarrow+\infty$ the metric $\lambda_i^2g$ is $C^0$-close to
\begin{align}
&-\lambda_i^2\xi_1(x_i)\xi_2(x_i)\check{g}_1 + \lambda_i^2(d\xi_1)^2 + \lambda_i^2(\theta_1 + \xi_2(x_i)\theta_2^\p)^2 +\\
&+\lambda_i^2(\xi_2(x_i)-\xi_1(x_i))\left[4\frac{\xi_2(x_i)^{n-2}}{1+\xi_2(x_i)+\dots+\xi_2(x_i)^{n-2}}(du)^2 + \right.\\ &\left.+\frac{1+\xi_2(x_i)+\dots+\xi_2(x_i)^{n-2}}{\xi_2(x_i)^{n-2}}u^2(\theta_2^\p)^2\right].
\end{align}

Observe that the tensor in the braket
\begin{align}\label{last term of approx of lambda^2g}
4\frac{\xi_2(x_i)^{n-2}}{1+\xi_2(x_i)+\dots+\xi_2(x_i)^{n-2}}(du)^2 + \frac{1+\xi_2(x_i)+\dots+\xi_2(x_i)^{n-2}}{\xi_2(x_i)^{n-2}}u^2(\theta_2^\p)^2
\end{align}
is a multiple of the metric tensor of a flat cone of dimension $2$ with certain angle at the vertex.

Now choose a sequence of positive real numbers $n_i$ such that $n_i\rightarrow +\infty$ and $\frac{m_i}{n_i}\rightarrow +\infty$. Then we consider the following two subcases.

In the first subcase, we assume that $u(x_i)\geq n_i^{-1}$. Recall that $|u(x_i^\p) - u(x_i)| \leq Cm_i^{-1}$, which is in turn much smaller than $n_i^{-1}$. So we deduce that in this subcase the function $u$ is strictly positive on the ball $B_{\lambda_i^2g}(x_i,2)$. It follows that no matter what angle the 2-dimensional cone is, the term \eqref{last term of approx of lambda^2g} converges to $g_{\mathbb{R}^2}$.

Recall that the $\mathbb{T}^2$-fibration $P$ is trivial on $U$, so $P|_U$ is diffeomorphic to $U\times\mathbb{T}^2$. Let $U\times \mathbb{R}^2$ be its universal covering, and let $\tilde{g}$ be the pullback of $g$ to $\tilde{M}^0 = \mathring{D}\times U\times \mathbb{R}^2$. Now $\tilde{M}^0$ is simply connected, so there exist real functions $t_1,t_2$ such that $dt_1=\theta_1$ and $dt_2 = \theta_2^\p$. It follows that on the ball $B_{\lambda_i^2\tilde{g}}(\tilde{x}_i,2)$ (where $\tilde{x}_i$ is any pullback of $x_i$), as $i\rightarrow +\infty$ the metric $\lambda_i^2\tilde{g}$ is $C^0$-close to

\begin{align}
&-\lambda_i^2\xi_1(x_i)\xi_2(x_i)\check{g}_1 + \lambda_i^2(d\xi_1)^2 + \lambda_i^2(dt_1 + \xi_2(x_i)dt_2)^2 +\\
&+\lambda_i^2(\xi_2(x_i)-\xi_1(x_i))\left[4\frac{\xi_2(x_i)^{n-2}}{1+\xi_2(x_i)+\dots+\xi_2(x_i)^{n-2}}(du)^2 + \right.\\
&\left.+\frac{1+\xi_2(x_i)+\dots+\xi_2(x_i)^{n-2}}{\xi_2(x_i)^{n-2}}u^2(dt_2)^2\right].
\end{align}

So as $i\rightarrow +\infty$, the above metric $C^0$-converges to $g_{\mathbb{R}^{2n-4}} + g_\mathbb{R} + g_\mathbb{R} + g_{\mathbb{R}^2} = g_{\mathbb{R}^{2n}}$ and it follows that for sufficiently large $i$, we have
\begin{align}
d_{GH}(B_{\lambda_i^2\tilde{g}}(\tilde{x}_i,2), B_{\mathbb{R}^{2n}}(0,2)) < \delta
\end{align}
and $\op{rank}(\Gamma_\delta(\tilde{x}_i)) = 0$. Applying Theorem \ref{theorem simple version of Naber-Zhang}, we have
\begin{align}
|\op{Rm}_{\lambda_i^2g}(x_i)| = |\op{Rm}_{\lambda_i^2\tilde{g}}(\tilde{x}_i)| \leq C(n).
\end{align}

In the other subcase, we assume that $u(x_i) < n_i^{-1} \rightarrow 0$, so the function $\xi_2$ converges to $1$ on the ball $B_{\lambda_i^2g}(x_i,2)$. It follows that the last term \eqref{last term of approx of lambda^2g} converges to $\frac{4}{n-1}\lambda_i^2(1-\xi_1(x_i))[(du)^2 + u^2(\frac{n-1}{2}\theta_2^\p)^2]$. Recall \eqref{theta1}-\eqref{theta2} that
\begin{align}
\theta_1 &= -\frac{2}{n-1}\eta_2,\\
\theta_2 &= -2\eta_1 + \frac{2}{n-1}\eta_2.
\end{align}
It follows that
\begin{align}
\theta_1 + \theta_2 &= -2\eta_1,\\
\frac{n-1}{2}\theta_2 &= -(n-1)\eta_1 + \eta_2.
\end{align}
So the dual base with respect to $\theta_1 + \theta_2, \frac{n-1}{2}\theta_2$ is
\begin{align}
K_{\theta_1 + \theta_2} &= -\frac{1}{2}T_1 - \frac{n-1}{2}T_2,\\
K_{\frac{n-1}{2}\theta_2} &= T_2.
\end{align}
Since $T_2$ is the primitive generator of an $\mathbb{S}^1$-action, it follows that the cone angle of $(du)^2 + u^2(\frac{n-1}{2}\theta_2)^2$ is $2\pi$. Since the $\mathbb{S}^1$-orbit generated by $K_{\theta_1 + \theta_2}$ intersects with the $\mathbb{S}^1$-orbit generated by $K_{\frac{n-1}{2}\theta_2}$ at exactly $n-1$ points, the metric $\lambda_i^2g$ is $C^0$-close to $\mathbb{R}^{2n-3}\times((\mathbb{R}^2\times\mathbb{S}^1)/\mathbb{Z}_{n-1})$ with a length of $\mathbb{S}^1$ bounded by $\lambda_i\rightarrow 0$. Let $\tilde{g}$ be the local pull back of $g$ to $\mathbb{R}^{2n-3}\times \mathbb{R}^2\times \mathbb{S}^1$ and let $\tilde{x}_i$ be any pull back of $x_i$, then for sufficiently large $i$, we have
\begin{align}
d_{GH}(B_{\lambda_i^2\tilde{g}}(\tilde{x}_i,2), B_{\mathbb{R}^{2n-1}}(0,2)) < \delta
\end{align}
and $\Gamma_\delta(\tilde{x}_i) = \mathbb{Z}$. In particular $\op{rank}(\Gamma_\delta(\tilde{x}_i)) = 1$.

Applying Theorem \ref{theorem simple version of Naber-Zhang}, we have
\begin{align}
|\op{Rm}_{\lambda_i^2g}(x_i)| = |\op{Rm}_{\lambda_i^2\tilde{g}}(\tilde{x}_i)| \leq C(n).
\end{align}

But on the other hand, we have
\begin{align}
|\op{Rm}_{\lambda_i^2g}(x_i)| = \frac{1}{\lambda_i^2}|\op{Rm}_{g}(x_i)| \geq \frac{C_i}{\lambda_i^2 + \lambda_i^2\rho_i} = \frac{C_i}{\lambda_i^2 + m_i^2}\rightarrow +\infty.
\end{align}
Thus, we get a contradiction, finishing the proof of the proposition.
\end{proof}

As a consequence of the estimate of Riemannian curvature, applying \cite[Lemma 4.3]{Heinthesis} we have
\begin{proposition}\label{proposition quasi-atlas Apostolov}
For any $0<\alpha < 1$, the metric $(g,\omega)$ of Apostolov-Cifarelli admits a quasi-atlas which is $C^{k,\alpha}$ for any $k\geq 1$.
\end{proposition}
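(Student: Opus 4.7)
The plan is essentially to package the geometric estimates already proven in this section and feed them into Hein's criterion. Hein's Lemma 4.3 takes as input a complete non-compact Riemannian manifold satisfying the $SOB(\beta)$ property together with a pointwise Riemann curvature bound of the form $|\operatorname{Rm}|(x) \leq C \bigl(1+\rho(x)\bigr)^{-1}$ (more generally, tied to a curvature radius compatible with the volume growth scale), and produces a covering of the manifold by charts from Euclidean balls whose pullback metrics are uniformly bounded in $C^{k,\alpha}$, with controlled overlap multiplicity.

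First, I would collect the three ingredients we need: completeness of $(\mathbb{C}^n,g)$, established in the discussion of Section \ref{section The steady gradient Kahler-Ricci soliton}; the $SOB(2n-1)$ property, which is the content of Proposition \ref{proposition SOB(2n-1)} (and provides the volume upper and lower bounds, the Ricci lower bound, and the relatively connected annuli condition); and the pointwise curvature decay $|\operatorname{Rm}_g(x)| \leq C/(1+\rho(x))$ from Proposition \ref{proposition better estimation of Rm of g}. Since $a=0$, the metric is Ricci-flat, so Einstein elliptic regularity (applied in harmonic coordinates, via standard Shi-type estimates) upgrades the $C^0$-bound on curvature to uniform $C^{k,\alpha}$ bounds at the natural curvature scale $\sqrt{1+\rho(x)}$.

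The choice of scale $r(x) = c\sqrt{1+\rho(x)}$ is the correct one: it matches the curvature decay so that on a $g$-ball of this radius the metric is quantitatively close to Euclidean after rescaling. Even though the asymptotic cone is $(2n-1)$-dimensional (so collapsing occurs along a $\mathbb{T}^1$-fiber at infinity), the collapsing length is of order $O(1)$, which is negligible compared with $\sqrt{\rho}$ as $\rho \to \infty$; hence on the local universal cover of $B_{r(x)}(x)$ one recovers a non-collapsed ball of bounded geometry, where Anderson-type harmonic coordinate charts exist. Passing back down, this gives the individual charts of the quasi-atlas. The $SOB(2n-1)$ volume estimates control the covering multiplicity through a standard Vitali-type packing.

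The main genuine work has already been done upstream, in Propositions \ref{proposition SOB(2n-1)} and \ref{proposition better estimation of Rm of g}; the remaining step is essentially verification and invocation. The only point requiring mild care is that the curvature bound from Proposition \ref{proposition better estimation of Rm of g} is stated outside a compact set, so near the origin one uses smoothness of the extended metric on $\mathbb{C}^n$ together with compactness to obtain the same bound (with a possibly larger constant) on all of $\mathbb{C}^n$, ensuring the hypotheses of \cite[Lemma 4.3]{Heinthesis} hold uniformly.
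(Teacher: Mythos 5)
Your proposal is correct and takes essentially the same route as the paper, whose proof is precisely the invocation of \cite[Lemma 4.3]{Heinthesis} on the strength of the curvature bound from Proposition \ref{proposition better estimation of Rm of g}, together with completeness and Ricci-flatness (which gives the higher-order curvature bounds). The extra scaffolding you add, namely the $SOB(2n-1)$ hypothesis and charts at the growing scale $\sqrt{1+\rho}$, is not needed for Hein's lemma (and note that the quasi-atlas definition forces charts of uniformly bounded size, so one works at unit scale), but this does not affect the argument.
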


Here we recall the definition of quasi-atlas.
\begin{definition}
Let $(N,\omega_0,g_0)$ be a complete K\"ahler manifold. A $C^{k,\alpha}$ quasi-atlas for $(N,\omega_0,g_0)$ is a collection $\{\Phi_x|x\in A\}$, $A\subset N$, of holomorphic local
diffeomorphisms $\Phi_x: B\rightarrow N$, $\Phi_x(0)=x$, from $B=B(0,1)\subset \mathbb{C}^m$ into $N$ which extend smoothly to the closure $\bar{B}$, and such that there exists
$C\geq 1$ with $\op{inj}_{\Phi_x^*g_0}\geq \frac{1}{C}$, $\frac{1}{C}g_{\mathbb{C}^m}\leq \Phi_x^*g_0\leq Cg_{\mathbb{C}^m}$, and
$||\Phi_x^*g_0||_{C^{k,\alpha}(B,g_{\mathbb{C}^m})}\leq C$ for all $x\in A,$ and such that for all $y\in N$ there exists $x\in A$ with $y\in \Phi_x(B)$ and
$d_{g_0}(y,\partial\Phi_x(B))\geq \frac{1}{C}$.
\end{definition}
Given a $C^{k,\alpha}$ quasi-atlas, we can define global H\"older spaces of functions by setting
\begin{align}
    ||u||_{C^{k,\alpha}(N)} = \sup\{||u\circ \Phi_x||_{C^{k,\alpha}(B)}|x\in A\}.
\end{align}

Combining Proposition \ref{proposition volume growth of g}, Theorem \ref{theorem asymptotic cone of g} and Proposition \ref{proposition better estimation of Rm of g}, we conclude that
\begin{proposition}\label{proposition ALF of apo-cifa}
The metric $(\mathbb{C}^n, g)$ of Apostolov-Cifarelli is an ALF metric in the following sense:
\begin{itemize}
  \item The volume growth of $g$ is of order $2n-1$;
  \item The asymptotic cone of $g$ is a $(2n-1)$-dimensional metric cone;
  \item The sectional curvature of $g$ is bounded by $\frac{C}{\rho}$ for some $C>0$.
\end{itemize}
\end{proposition}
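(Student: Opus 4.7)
The plan is to simply assemble the three items as direct corollaries of results already established in this section and the previous one; no new estimate is needed.

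For the first bullet, the volume growth statement is exactly the content of Proposition \ref{proposition volume growth of g} (which holds for any $l\geq 2$ and $a\geq 0$, and in particular for $l=2$, $a=0$). For the third bullet, Proposition \ref{proposition better estimation of Rm of g} gives the bound $|\op{Rm}_g(x)|\leq \frac{C}{1+\rho(x)}$ on the full Riemann tensor, and the sectional curvature is bounded by $|\op{Rm}_g|$ up to a dimensional constant, so the bound $|\op{sec}_g|\leq C/\rho$ outside a compact set follows immediately.

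The only item that deserves an actual sentence of argument is the second bullet, namely identifying the asymptotic cone and checking its dimension. By Theorem \ref{theorem asymptotic cone of g} the asymptotic cone is $E/\Lambda^+=(\prod_{j=1}^{l-1}\mathbb{C}^{d_j+1}/\Lambda)\times\mathbb{R}$, which for $l=2$, $d_1=n-2$ specializes to $(\mathbb{C}^{n-1}/\Lambda)\times\mathbb{R}$. To compute $\dim\Lambda$ we invoke Proposition \ref{proposition dimension of Lambda}: in the present normalization $\alpha_1=0$, $\alpha_2=1$, $a=0$, one has $\tau_1=-\frac{1}{n-1}\in\mathbb{Q}$, so $\dim_{\mathbb{Q}}\op{Span}_{\mathbb{Q}}\{1,\tau_1\}=1$ and hence $\dim\Lambda=0$. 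Thus $\Lambda=\mathbb{Z}_{n-1}$, the asymptotic cone is $(\mathbb{C}^{n-1}/\mathbb{Z}_{n-1})\times\mathbb{R}$, which is manifestly a metric cone (a product of a flat cone with a line), and its dimension equals $2(n-1)+1=2n-1$.

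There is no hard step here: the three bullets have already been proved as separate statements, and the only thing to verify explicitly is that in the special case $l=2$ the parameter $\tau$ is rational, so that the torus $\Lambda$ collapses to a finite group and the cone achieves the maximal dimension $2n-1$ compatible with the volume growth. Assembling these three observations completes the proof.
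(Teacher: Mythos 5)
Your proposal is correct and follows exactly the paper's own route: the paper states this proposition as an immediate consequence of Proposition \ref{proposition volume growth of g}, Theorem \ref{theorem asymptotic cone of g} (with $\tau_1=-\frac{1}{n-1}$ giving $\Lambda=\mathbb{Z}_{n-1}$ and the $(2n-1)$-dimensional cone $(\mathbb{C}^{n-1}/\mathbb{Z}_{n-1})\times\mathbb{R}$, as in the example after that theorem), and Proposition \ref{proposition better estimation of Rm of g}. Nothing is missing.
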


\section{ALF Calabi-Yau metrics modeled on the metric of Apostolov-Cifarelli}\label{section crepant resolution}
Following the previous section, let $(g,\omega)$ denote the metric of the Taub-NUT type of Apostolov and Cifarelli when $l=2$ and $a=0$. In this section, we are mainly interested in the case $n>2$, since the case $n=2$ corresponding to the Taub-NUT metric is well studied.

Consider the action of cyclic group $\mathbb{Z}_n$ of order $n$ on $\mathbb{C}^n$ generated by $(z_1,\dots,z_n)\mapsto (e^{\frac{2\pi i}{n}}z_1,\dots, e^{\frac{2\pi i}{n}}z_n)$. It is known that there is a crepant resolution $\pi: \mathcal{K}_{\mathbb{CP}^{n-1}}\rightarrow \mathbb{C}^n/\mathbb{Z}_n$, where $\mathcal{K}_{\mathbb{CP}^{n-1}}$ is the total space of the canonical bundle of $\mathbb{C}^{n-1}$. Then it is natural to ask whether there exists an ALF Calabi-Yau metric on $\mathcal{K}_{\mathbb{CP}^{n-1}}$ asymptotic to the quotient by $\mathbb{Z}_n$ of $(g,\omega)$.

More generally, by Theorem \cite[Theorem 1.4]{apostolov2023hamiltonian}, the metric of Apostolov-Cifarelli $(g,\omega)$ is invariant by the action of $U(1)\times U(n-1)$. Here the action of $U(1)$ comes from the rotation of the trivial bundle $\mathbb{C}$, and the action of $U(n-1)$ comes from its action on $O_{\mathbb{CP}^{n-2}}(-1)$. Assume that $\Gamma\subset U(1)\times U(n-1)$ is a finite subgroup such that the singularity $\mathbb{C}^n/\Gamma$ admits a crepant resolution $\pi: Y \rightarrow \mathbb{C}^n/\Gamma$. We will prove the following theorem in this section using the approach of Tian-Yau's work \cite{zbMATH04186562, zbMATH00059791} and result of of Hein \cite{Heinthesis}.
\begin{theorem}\label{theorem ALF metric asymptotic to apostolov}
For any compactly supported K\"ahler class of $Y$ and any $c>0$, there exists an ALF Calabi-Yau metric $\omega^\p$ having the same cohomology class on $Y$ which is asymptotic to $c\omega$ near the infinity. More precisely, we have
\begin{align}
|\nabla^k(\omega^\p - c\pi^*\omega)|_{\omega^\p} \leq C(k,\epsilon)(1 + \rho^\p)^{-2n+3+\epsilon},
\end{align}
where $\epsilon > 0$ is any small constant, $\rho^\p$ is the distance function from a point of $Y$ measured by $\omega^\p$ and $k\geq 0$.
\end{theorem}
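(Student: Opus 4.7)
The plan is to carry out the Tian-Yau-Hein construction on the resolution $\pi : Y \to \mathbb{C}^n/\Gamma$. First I would produce a background K\"ahler form $\omega_0$ on $Y$, lying in the prescribed compactly supported K\"ahler class, that agrees with $c\pi^*\omega$ outside a compact set $K\subset Y$. This is standard: pick a K\"ahler representative of the class on a neighbourhood of the exceptional divisor $\pi^{-1}(0)$, extend it by $c\pi^*\omega$ further out, and patch the two using the $i\partial\bar\partial$-lemma on an annular region (where $\pi$ is a biholomorphism onto its image in the smooth part of $\mathbb{C}^n/\Gamma$, so the topology is that of $\mathbb{C}^n\setminus\{0\}$ modulo a finite group, and the obstructing cohomology classes either vanish or are absorbed into the class we have prescribed).

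Next I would set up the Monge-Amp\`ere equation. Because $\mathbb{C}^n/\Gamma$ admits a crepant resolution we have $\Gamma\subset SU(n)$, so the standard holomorphic volume form $dz_1\wedge\cdots\wedge dz_n$ on $\mathbb{C}^n$ descends to $\mathbb{C}^n/\Gamma$ and lifts through $\pi$ to a nowhere-vanishing holomorphic volume form $\Omega_Y$ on $Y$. Since $(g,\omega)$ is Ricci-flat, the ratio $\omega^n/(\Omega\wedge\overline{\Omega})$ is a positive constant; scaling $\Omega_Y$ appropriately, the function
\begin{equation*}
    f \;:=\; \log\!\left(\frac{\Omega_Y\wedge\overline{\Omega_Y}}{\omega_0^{\,n}}\right)
\end{equation*}
vanishes outside $K$ and thus is smooth and compactly supported on $Y$. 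Solving $(\omega_0+i\partial\bar\partial\phi)^n = e^f\omega_0^{\,n}$ and setting $\omega^\p:=\omega_0+i\partial\bar\partial\phi$ yields a Ricci-flat K\"ahler metric in the correct class.

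To apply Hein's non-compact Calabi-Yau theorem from \cite{Heinthesis} I need the two structural hypotheses for $(Y,\omega_0)$: the $SOB(2n-1)$ property and a $C^{k,\alpha}$ quasi-atlas. Away from $K$, $\omega_0$ coincides with the quotient of $c\omega$, so Proposition \ref{proposition SOB(2n-1)} (or the general Proposition \ref{proposition SOB general} combined with Proposition \ref{proposition volume growth of g}) transports directly; on the compact piece $K$ both conditions are automatic, and the elementary argument in Definition \ref{definition of SOB} shows that they glue. Similarly, Proposition \ref{proposition quasi-atlas Apostolov} supplies a quasi-atlas outside $K$, while standard coordinate charts on the smooth complex manifold $Y$ supply one over $K$; after truncating and merging we obtain a global $C^{k,\alpha}$ quasi-atlas. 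The normalisation constant absorbed into $\Omega_Y$ can be chosen so that $\int_Y(e^f-1)\,\omega_0^{\,n}=0$, as is required by Hein's existence theorem.

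Hein's theorem then yields a smooth solution $\phi$ with weighted decay of all covariant derivatives. Because $f$ is compactly supported, the optimal weight available in his framework is controlled by the decay rate of the Green's function of the asymptotic model, which for a $SOB(2n-1)$ manifold is $(1+\rho)^{-(2n-3)}$; one then pays an arbitrarily small loss $\epsilon>0$ to accommodate borderline indicial exponents, obtaining
\begin{equation*}
    |\nabla^k\phi|_{\omega_0}\;\leq\;C(k,\epsilon)\,(1+\rho)^{-(2n-3-\epsilon)-k}\qquad (k\geq 0).
\end{equation*}
Since $\omega^\p - c\pi^*\omega = (\omega_0-c\pi^*\omega) + i\partial\bar\partial\phi$ and the first summand has compact support, the announced estimate follows, with $\omega^\p$ and $\omega_0$ comparable at infinity so that $\rho^\p\sim\rho$. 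The main obstacle is the fourth bullet of Definition \ref{definition of SOB} and the precise matching of weighted spaces: one needs to verify that the weight $-(2n-3)+\epsilon$ is not an indicial exponent of the asymptotic Laplacian on the cone $(\prod_j \mathbb{C}^{d_j+1}/\Lambda)\times\mathbb{R}$ and that Hein's a priori estimates (which are purely metric-analytic once $SOB(\beta)$ and a quasi-atlas are in hand) yield the full $C^{k,\alpha}$ bounds asserted; the curvature control from Proposition \ref{proposition better estimation of Rm of g} is essential to close this loop, and any unexpected indicial resonance at $-(2n-3)$ is exactly what forces the loss $\epsilon$ in the statement.
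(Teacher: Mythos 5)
Your overall strategy is the same as the paper's (glue a background form in the prescribed class equal to $c\pi^*\omega$ at infinity, solve the Monge--Amp\`ere equation via Hein's non-compact existence theorem using the $SOB(2n-1)$ property and the quasi-atlas, then get derivative decay by Schauder estimates on the charts), but there are concrete gaps in how you execute two of the steps. The most serious one is the gluing. You assert that patching a K\"ahler representative near the exceptional set with $c\pi^*\omega$ is ``standard'' via the $i\partial\bar\partial$-lemma, but the $i\partial\bar\partial$-lemma only gives you a potential $v$ with $\omega_Y=-i\partial\bar\partial v$ outside a compact set; once you multiply $v$ by a cutoff, the term $i\partial\bar\partial(\zeta v)$ has no sign and can destroy positivity on the transition annulus, and nothing in your outline controls it. The paper's Lemma \ref{lemma construction of asymptotic CY metric} handles exactly this point by exploiting the global K\"ahler potential $H$ of Proposition \ref{proposition properties of potential}: one adds $C\,i\partial\bar\partial\bigl((1-\zeta_S)K^\alpha\bigr)$ with $\alpha<1$, first taking $C$ large to dominate the cutoff error on a fixed annulus, and then $S$ large so that switching off $K^\alpha$ costs only $O(S^{-(1-\alpha)})$ relative to $i\partial\bar\partial K$ (this uses $dK\wedge d^cK\leq CK\,dd^cK$). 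Some device of this kind is needed; without it the construction of $\hat\omega$ is incomplete. Relatedly, your normalisation step is wrong as stated: Hein's theorem in the form used here (Proposition \ref{hein's thesis}) does not require $\int_Y(e^f-1)\omega_0^n=0$, and you could not arrange it anyway by rescaling $\Omega_Y$, since that shifts $f$ by a nonzero constant and destroys the compact support/decay of $f$. Fortunately this step is unnecessary and can simply be deleted.

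The second gap concerns the decay. In the paper the $C^0$ decay $|u|\leq C(\epsilon)\rho^{2-\mu+\epsilon}$ comes from the second part of Proposition \ref{hein's thesis}, whose hypothesis is the existence of a weight function $\tilde\rho\sim 1+\rho$ with $|\nabla\tilde\rho|+\tilde\rho\,|dd^c\tilde\rho|$ bounded; this is supplied by $h_{1/2}\sim\sqrt{K}$ (Lemma \ref{lemma weight function}), again using the potential estimate $dK\wedge d^cK\leq CK\,dd^cK$. Your appeal to Green's function asymptotics and indicial exponents of the Laplacian on the singular cone $(\prod_j\mathbb{C}^{d_j+1}/\Lambda)\times\mathbb{R}$ is not how this framework operates and would in fact be delicate to justify here (the cone is singular and the convergence to it is only Gromov--Hausdorff); the $\epsilon$-loss in the exponent comes from Hein's method, not from a resonance at $-(2n-3)$. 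Finally, your claimed estimate $|\nabla^k\phi|\leq C(1+\rho)^{-(2n-3-\epsilon)-k}$, with a gain of one power per derivative, is not available in this ALF setting: the quasi-atlas charts have uniformly bounded (not growing) size, so Schauder estimates on them give only $|\nabla^k u|_{\omega^\p}\leq C(k,\epsilon)(1+\rho^\p)^{-2n+3+\epsilon}$ for every $k$, which is what the theorem asserts and all that the paper proves.
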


\subsection{Construction of an asymptotic Calabi-Yau metric}
In this subsection, let $(Y, J)$ be a complex manifold of complex dimension $n$ ($n>2$) such that the canonical bundle $\mathcal{K}_Y$ is trivial, and assume that there is a plurisubharmonic function $K$. Denote $\omega=i\partial\bar{\partial}K$, we assume that $\omega$ is strictly positive outside a compact set. Let $\rho$ be a distance function measured by a complete Riemannian metric $g$ which coincides with the K\"ahler metric of $\omega$ outside a compact set. We assume that $K> 0$, $K$ is comparable to $\rho^2$ outside a compact set and $dK\wedge d^cK \leq C K dd^c K$, that is to say, the function $K$ satisfies all the three properties listed in Proposition \ref{proposition properties of potential}.

\begin{lemma}\label{lemma dd^cK^alpha>0}
There exists $0<\alpha_0<1$ such that for any $\alpha > \alpha_0$, we have $dd^c(K)^\alpha > 0$.
\end{lemma}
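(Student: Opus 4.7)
The plan is a short direct pointwise computation. I expand $dd^c(K^\alpha)$ via the chain rule, split the result into one piece proportional to $dd^c K$ and one piece proportional to $dK\wedge d^c K$, and absorb the second into the first using the standing hypothesis $dK\wedge d^c K \leq C\,K\,dd^c K$. Since the other hypotheses on $K$ already guarantee $K > 0$ everywhere and $dd^c K > 0$ outside a compact set, this will give $dd^c K^\alpha > 0$ on the same region, which is precisely what the subsequent Tian--Yau ansatz in Section \ref{section crepant resolution} needs near infinity.

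Explicitly, applying $d$ and $d^c$ to $K^\alpha$ yields
\begin{align*}
dd^c K^\alpha \;=\; \alpha K^{\alpha-1}\,dd^c K \;+\; \alpha(\alpha-1)K^{\alpha-2}\,dK\wedge d^c K,
\end{align*}
which I rearrange as
\begin{align*}
dd^c K^\alpha \;=\; \alpha K^{\alpha-2}\bigl[\,K\,dd^c K \;-\; (1-\alpha)\,dK\wedge d^c K\,\bigr].
\end{align*}
For $\alpha\in(0,1)$ the coefficient $(1-\alpha)$ is positive, and since $dK\wedge d^c K = 2i\,\partial K\wedge\bar\partial K$ is a nonnegative $(1,1)$-form, the hypothesis $dK\wedge d^c K \leq C\,K\,dd^c K$ applies with the correct sign and gives
\begin{align*}
dd^c K^\alpha \;\geq\; \alpha\bigl[\,1 - C(1-\alpha)\,\bigr]\,K^{\alpha-1}\,dd^c K.
\end{align*}

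Setting $\alpha_0 := 1 - 1/C$, the scalar factor $1 - C(1-\alpha)$ is strictly positive whenever $\alpha > \alpha_0$; combined with $K > 0$ and $dd^c K > 0$ outside a compact set this yields $dd^c K^\alpha > 0$ there, proving the lemma. I do not expect any real obstacle: the whole argument is elementary manipulation, and the only care needed is to track the sign of $1 - \alpha$ so that the inequality from the hypothesis is applied in the correct direction (i.e.\ to an expression where it enters with a positive coefficient).
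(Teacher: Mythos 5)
Your proof is correct and is essentially the paper's own argument: the same chain-rule expansion $dd^cK^\alpha=\alpha K^{\alpha-2}\bigl[(\alpha-1)dK\wedge d^cK+K\,dd^cK\bigr]$ with the term $dK\wedge d^cK$ absorbed via the standing hypothesis $dK\wedge d^cK\leq C\,K\,dd^cK$, the paper merely saying ``choose $\alpha_0$ close to $1$'' where you make $\alpha_0=1-1/C$ explicit. Your added care that strict positivity is obtained where $dd^cK>0$ (i.e.\ outside a compact set) is consistent with how the lemma is used afterwards and is, if anything, a slightly more precise phrasing than the paper's.
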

\begin{proof}
By the assumption of $K$, we have
\begin{align}\label{equation dd^cK^alpha}
dd^cK^\alpha = \alpha K^{\alpha-2}\left[(\alpha - 1)dK\wedge d^cK + Kdd^cK \right] > 0
\end{align}
by choosing $\alpha_0$ sufficiently close to $1$.
\end{proof}

\begin{lemma}\label{lemma h_alpha, general}
For $\alpha > \alpha_0$ where $\alpha_0$ is defined in Lemma \ref{lemma dd^cK^alpha>0}, there exists a strictly positive and smooth plurisubharmonic function $h_\alpha$ on $Y$ that is strictly plurisubharmonic and equal to $K^\alpha$ outside a compact set.
\end{lemma}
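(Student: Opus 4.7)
The plan is to leave $K^\alpha$ untouched in the region where Lemma \ref{lemma dd^cK^alpha>0} already gives strict plurisubharmonicity, and to glue it to an auxiliary strictly plurisubharmonic function on a compact core via Richberg's regularized maximum. By Lemma \ref{lemma dd^cK^alpha>0}, I fix $R_0 > 0$ so that $dd^c K^\alpha > 0$ strictly on $\{K > R_0\}$; since $K$ is comparable to $\rho^2$ outside a compact set, every sublevel $\{K \leq R\}$ is compact.

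Next I will construct a smooth strictly plurisubharmonic function $\phi$ on an open neighborhood $V$ of $\{K \leq R_0 + 2\}$. This is a classical application of Richberg's theorem: cover the compact set by finitely many holomorphic coordinate charts $(U_i, z^{(i)})$, set $\phi_i = |z^{(i)}|^2$ on $U_i$, and patch the translated local potentials $\phi_i - C_i$ globally via regularized maximum, choosing the additive constants $C_i$ so that on each piece of $V$ exactly one translated potential dominates the others.

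With $\phi$ in hand, I pick $R_2 > R_0$ large with $\{K \leq R_2 + 2\} \subset V$, a small $\varepsilon > 0$, and a constant $C$ such that
\begin{align*}
\phi + C > K^\alpha + 2\varepsilon \quad\text{on } \{K \leq R_0\}, \qquad \phi + C < K^\alpha - 2\varepsilon \quad\text{on } \{K \geq R_2\} \cap V,
\end{align*}
and $\phi + C > 0$ on $\overline V$. These constraints are compatible for $R_2$ large, since $K^\alpha \to \infty$ while the oscillation of $\phi$ on $\overline V$ stays fixed; the gap $R_2^\alpha - R_0^\alpha$ need only exceed this oscillation plus $4\varepsilon$. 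Then I define
\begin{align*}
h_\alpha = \begin{cases} \phi + C & \text{on } \{K \leq R_0\}, \\ \max\nolimits_\varepsilon(K^\alpha, \phi + C) & \text{on } \{R_0 \leq K \leq R_2\}, \\ K^\alpha & \text{on } \{K \geq R_2\}. \end{cases}
\end{align*}
The inequalities above force the regularized maximum to equal $\phi + C$ near $\{K = R_0\}$ and to equal $K^\alpha$ near $\{K = R_2\}$, so the three cases glue into a globally smooth function. On the inner piece it is strictly plurisubharmonic because $\phi$ is; on the outer piece it is strictly plurisubharmonic by the choice of $R_0$; and on the interpolation region both $K^\alpha$ and $\phi + C$ are strictly plurisubharmonic, so the regularized max inherits the property. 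Positivity and the identity $h_\alpha = K^\alpha$ outside a compact set are immediate.

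The only non-bookkeeping step is the construction of $\phi$, which is the standard fact that any complex manifold admits strictly plurisubharmonic functions on relatively compact open subsets. Everything else is a matter of arranging the constants $R_0, R_2, C, \varepsilon$ so that the three cases of $h_\alpha$ transition smoothly, which is where I expect the write-up to demand the most care but no conceptual difficulty.
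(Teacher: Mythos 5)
There is a genuine gap, and it sits exactly at the step you call ``non-bookkeeping'': the auxiliary strictly plurisubharmonic function $\phi$ on a neighborhood $V$ of the compact core does not exist in general, and in particular it does not exist in the situation this lemma is written for. The ``standard fact'' you invoke is false: an arbitrary complex manifold does \emph{not} admit strictly plurisubharmonic functions on arbitrary relatively compact open subsets (Richberg's theorem is about smoothing functions that are already continuous and strictly psh, not about producing them), and patching $|z^{(i)}|^2 - C_i$ by regularized maxima cannot be arranged so that ``exactly one potential dominates'' on an arbitrary compact piece. Concretely, in this paper $Y$ is a crepant resolution of $\mathbb{C}^n/\Gamma$ (e.g.\ $\mathcal{K}_{\mathbb{CP}^{n-1}}$), so the sublevel set $\{K\leq R_0+2\}$ contains the exceptional set, which has compact complex subvarieties of positive dimension (the zero section $\mathbb{CP}^{n-1}$ in the model case). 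Restricting a strictly plurisubharmonic $\phi$ to such a compact subvariety would contradict the maximum principle, so no such $\phi$ exists and your gluing scheme cannot get started. Relatedly, you have read the statement as demanding strict plurisubharmonicity of $h_\alpha$ on all of $Y$; by the same maximum-principle obstruction that is impossible here, and the qualifier ``outside a compact set'' must be taken to govern both the strictness and the equality with $K^\alpha$ (this weaker version is also all that is used later, in the construction of $\hat\omega$, where on the compact core one only needs $i\partial\bar\partial h_\alpha\geq 0$ because positivity there is supplied by $\omega_Y$).

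The paper's proof avoids the issue entirely and is much shorter: take a smooth $\psi:\mathbb{R}\to\mathbb{R}$ with $\psi'\geq 0$, $\psi''\geq 0$, $\psi\equiv 3$ on $(-\infty,2)$ and $\psi(t)=t$ for $t>4$, and set $h_\alpha=\psi(K^\alpha)$. Then $dd^c h_\alpha=\psi''(K^\alpha)\,dK^\alpha\wedge d^cK^\alpha+\psi'(K^\alpha)\,dd^cK^\alpha\geq 0$ everywhere by Lemma \ref{lemma dd^cK^alpha>0}, $h_\alpha\geq 3>0$, $h_\alpha$ is locally constant (hence merely psh, as it must be) on the core, and $h_\alpha=K^\alpha$, strictly psh, outside a compact set. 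If you want to salvage your two-region gluing, you would have to replace $\phi$ by something that is only psh on the core (e.g.\ a constant), at which point you are essentially reproducing the paper's composition trick; as written, the proposal proves a statement that is false in the intended application.
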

\begin{proof}
Let $\psi:\mathbb{R}\rightarrow\mathbb{R}$ be a smooth function such that $\psi^\p,\psi^{\p\p}\geq 0$, $\psi(t) = 3$ if $t<2$ and $\psi(t) = t$ if $t > 4$. Define $h_\alpha = \psi(K^\alpha)$, then the lemma follows from Lemma \ref{lemma dd^cK^alpha>0} and the following formula:
\begin{align}\label{dd^c of composition}
dd^c(\psi\circ f) = \psi^{\p\p}(f) df\wedge d^cf + \psi^\p(f)dd^cf.
\end{align}
\end{proof}

\begin{lemma}\label{lemma construction of asymptotic CY metric}
For any compactly supported K\"ahler class $\omega_Y\in H^2_c(Y, \mathbb{R})$ and any $c>0$, there exists a K\"ahler form $\hat{\omega}$ on $Y$ having the same K\"ahler class as $\omega_Y$ such that $\hat{\omega} = c\omega$ outside a compact set.
\end{lemma}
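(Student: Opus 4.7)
The plan is to start from any K\"ahler representative of the class and modify it on a neighborhood of infinity to coincide with $c\omega$, the modification being a $dd^c$-potential spread over a wide annulus by a cutoff. Since $\omega = dd^cK$ is globally exact on $Y$ and the image of $[\omega_Y]$ in $H^2(Y,\mathbb{R})$ is by hypothesis a K\"ahler class, I begin with a K\"ahler form $\omega_Y^K$ on $Y$ representing $[\omega_Y]$. On any neighborhood of infinity $U = Y\setminus\Omega_0$, where $\Omega_0$ contains the support of a compactly supported representative of $[\omega_Y]$, the closed $(1,1)$-form $\omega_Y^K - c\omega$ becomes exact.

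Concretely I would proceed as follows. First, invoke a $\partial\bar\partial$-lemma on $U$ to produce a smooth real function $\psi$ with $\omega_Y^K - c\omega = dd^c\psi$ there. In the setting of this paper, where $Y$ is a crepant resolution of $\mathbb{C}^n/\Gamma$ and $U$ is biholomorphic to a neighborhood of infinity in $\mathbb{C}^n/\Gamma$, the relevant $(0,1)$-Dolbeault cohomology of $U$ vanishes by the Hartogs extension principle for $n\geq 2$, and the exact $(1,1)$-form becomes $dd^c$-exact. Second, pick a smooth cutoff $\chi$ with $\chi\equiv 0$ on a compact $\Omega_2\supset\Omega_0$ and $\chi\equiv 1$ outside a larger compact $\Omega_3\supset\Omega_2$, and set
\begin{equation*}
    \hat\omega := \omega_Y^K - dd^c(\chi\psi),
\end{equation*}
extending $\chi\psi$ by zero on $\Omega_0$. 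Then $\hat\omega$ is closed of type $(1,1)$, it equals $\omega_Y^K$ on $\Omega_2$ (hence is K\"ahler there), and it equals $\omega_Y^K - dd^c\psi = c\omega$ outside $\Omega_3$. Moreover $[\hat\omega] = [\omega_Y^K] = [\omega_Y]$ in $H^2(Y,\mathbb{R})$, since $\hat\omega$ and $\omega_Y^K$ differ by a globally $dd^c$-exact form.

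The main obstacle is positivity of $\hat\omega$ on the transition annulus $\Omega_3\setminus\Omega_2$. Expanding by Leibniz and using $dd^c\psi = \omega_Y^K - c\omega$ gives
\begin{equation*}
    \hat\omega = (1-\chi)\omega_Y^K + \chi c\omega - d\chi\wedge d^c\psi - d\psi\wedge d^c\chi - \psi\,dd^c\chi
\end{equation*}
on this annulus. The leading convex combination of two K\"ahler forms is strictly positive and bounded below by a definite K\"ahler form (both $\omega_Y^K$ and $c\omega$ are K\"ahler in this region by the choice of $\Omega_2$). The remaining terms are controlled by $\|d\chi\|_{C^0}$, $\|dd^c\chi\|_{C^0}$ and $\|\psi\|_{C^2}$ on the annulus. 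To beat them, I would use the freedom in choosing $\omega_Y^K$, adding if necessary a $dd^c$-exact correction---for instance built from the function $h_\alpha$ of Lemma \ref{lemma h_alpha, general}---so that $\omega_Y^K - c\omega$ decays along a sequence of annuli going to infinity, forcing $\|\psi\|_{C^2}$ to stay bounded on a suitable transition region. Then by taking $\chi$ to interpolate slowly over a wide enough annulus, the derivatives of $\chi$ become arbitrarily small, and $\hat\omega$ remains strictly positive, completing the construction.
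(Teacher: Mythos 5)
The overall gluing strategy is the right one, but the step on which everything hinges --- positivity of $\hat\omega$ on the transition annulus --- is not actually established, and the proposed fix is circular. Your error terms $d\chi\wedge d^c\psi + d\psi\wedge d^c\chi + \psi\,dd^c\chi$ must be absorbed by the convex combination $(1-\chi)\omega_Y^K + \chi c\omega$, measured against a fixed reference (essentially $g\sim\omega$ near infinity). If the cutoff transitions at scale $K\sim S$, then $|d\chi|_g\lesssim S^{-1/2}$ and $|dd^c\chi|_g\lesssim S^{-1}$, so what you need is $|\psi| = o(S)$ and $|d\psi|_g = o(S^{1/2})$ on the annulus, together with a lower bound on $\omega_Y^K$ relative to $g$ that does not degenerate as the annulus is pushed to infinity. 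None of this comes from an abstract $\partial\bar\partial$-lemma on the end: the potential $\psi$ is only determined up to pluriharmonic functions and carries no growth estimates, and decay of $dd^c\psi$ does not bound $\psi$ or $d\psi$. The suggested remedy --- ``add a $dd^c$-exact correction so that $\omega_Y^K - c\omega$ decays, forcing $\|\psi\|_{C^2}$ to stay bounded'' --- presupposes exactly the controlled potential you are trying to produce. (A side issue: the claimed vanishing of $H^{0,1}$ of the end ``by Hartogs for $n\geq 2$'' is false for $n=2$, and the lemma is stated for a general $Y$ with a psh exhaustion-type potential $K$, not only for crepant resolutions of $\mathbb{C}^n/\Gamma$; the paper instead quotes \cite[Corollary A.3]{Conlon-Hein1} to get a potential $v$ with $\omega_Y = -i\partial\bar\partial v$ outside a compact set.)

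The paper's proof is designed precisely to avoid any growth control on $v$, by using two cutoffs at two different scales. The potential $v$ of the compactly supported class is cut off on a \emph{fixed compact} annulus $\{2R\leq K\leq 3R\}$, where positivity is restored simply by adding $C\,i\partial\bar\partial h_\alpha$ with $C$ large (possible by compactness, since $h_\alpha$ of Lemma \ref{lemma h_alpha, general} is strictly psh there); since this large term would destroy the asymptotics, $h_\alpha$ is itself cut off at a much larger scale $S$, via $C\,i\partial\bar\partial((1-\zeta_S)h_\alpha)$, and there the error is harmless because $h_\alpha = K^\alpha$ with $\alpha<1$, giving $|i\partial\bar\partial((1-\zeta_S)h_\alpha)|_{i\partial\bar\partial K}\lesssim S^{-(1-\alpha)}\to 0$, to be absorbed by $c\,i\partial\bar\partial h_1 = c\omega$. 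In other words, the quantitative input is the explicit sub-quadratic buffer potential $K^\alpha$, not an estimate on the glued potential. To rescue your single-cutoff argument you would have to supply a potential for $\omega_Y^K - c\omega$ on the end growing strictly slower than $K$ together with a derivative bound, which is the nontrivial content you have not provided; as written, the proof has a genuine gap at this point.
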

\begin{proof}
Since $[\omega_Y]\in H^2_c(Y,\mathbb{R})$, by \cite[Corollary A.3]{Conlon-Hein1}, there exists a smooth real function $v$ such that $\omega_Y = -i\partial\bar\partial v$ when $K > R$ for $R$ sufficiently large.

Fix $\alpha \in (\alpha_0, 1)$, we can assume (by enlarging $R$ if necessary) that $h_\alpha = K^\alpha$ and $h_1 = K$ when $K>R$ where $h_\alpha,h_1$ are defined in Lemma \ref{lemma h_alpha, general}.

Fix a cutoff function $\chi: \mathbb{R}\rightarrow [0,1]$ satisfying $\chi(s) = 0$ if $s<2R$ and $\chi(s) = 1$ if $s>3R$. Define $\zeta: Y \rightarrow \mathbb{R}$ by $\zeta(y) = \chi(K(y))$. For $S>2$, define $\zeta_S(y) = \chi(\frac{K(y)}{S})$. Note that $0<2R<3R<2SR<3SR<+\infty$.

Given $c>0$, we construct
\begin{align}
\hat{\omega} = \omega_Y + i\partial\bar\partial(\zeta v) + C i\partial\bar\partial((1-\zeta_S)h_\alpha) + ci\partial\bar\partial h_1,
\end{align}
with $C$ and $S$ to be determined. It is clear that $\hat{\omega}$ lies in the same cohomology class as $\omega_Y$.

On the region $\{K < 2R \}$, $\hat{\omega} = \omega_Y + Ci\partial\bar\partial h_\alpha + ci\partial\bar\partial h_1 \geq \omega_Y > 0$.

On the region $\{3R < K < 2SR\}$, $\hat{\omega} = Ci\partial\bar\partial h_\alpha + ci\partial\bar\partial h_1 > 0$.

On the region $\{3SR < K\}$, $\hat{\omega} = ci\partial\bar\partial h_1 = c\omega > 0$.

On the region $\{2R \leq K \leq 3R \}$, $\hat{\omega} = \omega_Y + i\partial\bar\partial(\zeta v) + Ci\partial\bar\partial h_\alpha + ci\partial\bar\partial h_1 > 0$ if $C$ is made large enough.

Finally on the region $\{2SR\leq K \leq 3SR \}$, $\hat{\omega} = C i\partial\bar\partial((1-\zeta_S)h_\alpha) + ci\partial\bar\partial h_1$. By assumption of $K$, we know that
\begin{align}
|dK|_{i\partial\bar\partial K}, |d^cK|_{i\partial\bar\partial K} \leq C^\p(K)^{\frac{1}{2}}.
\end{align}
After some simple derivation, it follows that $|i\partial\bar\partial((1-\zeta_S)h_\alpha)|_{i\partial\bar\partial h_1} \leq C^{\p\p}S^{-(1-\alpha)}$ on this region, so $\hat{\omega} > 0$ if $S$ is made large enough.
\end{proof}

\begin{lemma}\label{lemma weight function}
The smooth function $h_\frac{1}{2}$ defined in Lemma \ref{lemma h_alpha, general} is comparable to $1 + \rho$, and $|\nabla h_\frac{1}{2}| + h_\frac{1}{2}|dd^c h_\frac{1}{2}|$ is bounded on $Y$. Here the norm and Laplacian are calculated with respect to $g$.
\end{lemma}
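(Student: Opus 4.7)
The plan is to combine the three properties of $K$ from Proposition \ref{proposition properties of potential} with the formula \eqref{equation dd^cK^alpha} specialized to $\alpha = \tfrac{1}{2}$, which is permitted since the assumptions on $K$ match exactly those used in Lemma \ref{lemma dd^cK^alpha>0} (so we may assume $\alpha_0 < \tfrac{1}{2}$; if not, the argument below still works by applying it to $K^{2\alpha_0}$ instead of $K$, since all hypotheses are preserved under this substitution). Because $\psi(t) = t$ for $t > 4$, one has $h_{1/2} = K^{1/2}$ outside a compact set $\mathcal{K} \subset Y$, while on $\mathcal{K}$ the function $h_{1/2}$ is smooth and strictly positive. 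Combined with $K \sim \rho^2$ at infinity, this immediately gives $h_{1/2} \sim 1 + \rho$, handling the first assertion.

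For $|\nabla h_{1/2}|_g$, the key preliminary step is to convert the $(1,1)$-form bound $dK \wedge d^cK \leq C\,K\,dd^cK$ into the scalar bound $|dK|_g^2 \leq C'K$ outside $\mathcal{K}$. This is done by tracing both sides against the K\"ahler form $\omega = dd^cK$ (which agrees with the K\"ahler form of $g$ outside a compact set): the trace of $i\,\partial K \wedge \bar\partial K$ against $\omega$ is a constant multiple of $|dK|_g^2$, while the trace of $dd^cK$ against itself equals $n$. From $dh_{1/2} = \tfrac{1}{2}K^{-1/2}\,dK$ one then deduces
\begin{equation*}
|\nabla h_{1/2}|_g = \tfrac{1}{2}K^{-1/2}|dK|_g \leq \tfrac{1}{2}\sqrt{C'}
\end{equation*}
outside $\mathcal{K}$, and boundedness on $\mathcal{K}$ follows by continuity.

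For the Hessian term, formula \eqref{equation dd^cK^alpha} with $\alpha = \tfrac{1}{2}$ yields
\begin{equation*}
dd^c h_{1/2} = -\tfrac{1}{4}K^{-3/2}\,dK \wedge d^cK + \tfrac{1}{2}K^{-1/2}\,dd^cK
\end{equation*}
outside $\mathcal{K}$. Using $|dK \wedge d^cK|_g \leq C\,|dK|_g^2 \leq CC'K$ and $|dd^cK|_g = O(1)$ (since $dd^cK$ is the K\"ahler form of $g$ there), this produces $|dd^c h_{1/2}|_g \leq C'' K^{-1/2}$, so $h_{1/2}\,|dd^c h_{1/2}|_g \leq C''$ outside $\mathcal{K}$, and again boundedness on $\mathcal{K}$ is automatic.

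No step is really hard; the only place requiring a short argument rather than direct substitution is the passage from the $(1,1)$-form inequality on $dK \wedge d^cK$ to the scalar inequality $|dK|_g^2 \leq C'K$. Everything else is the chain rule applied to $\psi \circ K^{1/2}$ together with the already-assumed equivalences $K \sim \rho^2$ and $dd^cK = \omega \sim g$ at infinity.
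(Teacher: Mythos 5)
Your proof is correct and follows essentially the same route as the paper's: reduce to $w=K^{1/2}$ outside a compact set where $h_{1/2}=K^{1/2}$, get the gradient bound from $dK\wedge d^cK\leq CK\,dd^cK$ (equivalently $|dK|_g^2\leq C'K$ after tracing), and get the Hessian bound from the formula for $dd^cK^{1/2}$, i.e.\ \eqref{equation dd^cK^alpha} with $\alpha=\tfrac{1}{2}$. The parenthetical about $\alpha_0<\tfrac{1}{2}$ is unnecessary (and the fallback of replacing $K$ by $K^{2\alpha_0}$ would change the growth rate and hence not prove the statement about $h_{1/2}$ itself), since \eqref{equation dd^cK^alpha} is an identity valid for every $\alpha$ and the lemma never uses positivity of $dd^ch_{1/2}$, so no constraint involving $\alpha_0$ is needed.
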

\begin{proof}
That $h_\frac{1}{2}$ is comparable to $1 + \rho$ is a consequence of the assumption that $K$ is comparable to $\rho^2$.

For the proof of the boundedness of $|\nabla h_\frac{1}{2}| + h_\frac{1}{2}|dd^c h_\frac{1}{2}|$, it suffices to prove the boundedness of the same formula replacing $h_\frac{1}{2}$ by $w=K^\frac{1}{2}$ outside of a compact set.

First we calculate $dw = \frac{1}{2}K^{-\frac{1}{2}}dK$, so by the assumption that $dK\wedge d^cK \leq C K dd^c K$ we have $|dw|\leq C$.
Next for $dd^cw$ we note that
\begin{align}
dd^cK^\frac{1}{2} = \frac{1}{2}(K)^{-\frac{3}{2}}\left[-\frac{1}{2}dK\wedge d^cK + Kdd^cK \right].
\end{align}
So the boundedness of $w|dd^c w|$ also follows from the assumption that $dK\wedge d^cK \leq C K dd^c K$ and \eqref{equation dd^cK^alpha}.
\end{proof}

\subsection{Proof of Theorem \ref{theorem ALF metric asymptotic to apostolov}}
Instead of proving Theorem \ref{theorem ALF metric asymptotic to apostolov} directly, we will prove a more general proposition with the help of the following result of \cite{Heinthesis}.

\begin{proposition}\label{hein's thesis}
Let $(N,\omega_0,g_0)$ be a complete noncompact K\"ahler manifold of complex dimension $m$ with a $C^{3,\alpha}$ quasi-atlas which satisfies $SOB(\beta)$ for some $\beta>2$. Let
$f\in C^{2,\alpha}(N)$ satisfies $|f|\leq Cr^{-\mu}$ on $\{r>1\}$ for some $\beta>\mu>2$. Then there exist
$\bar{\alpha}\in (0,\alpha]$ and $u\in C^{4,\bar{\alpha}}$ such that $(\omega_0+i\partial\bar\partial u)^m=e^f\omega_0^m$ and that $\omega_0+i\partial\bar\partial u$ is a K\"ahler form uniformly equivalent to $\omega_0$. If in addition $f\in C^{k,\bar\alpha}_{loc}(N)$ for some $k\geq 3$, then all such solutions $u$ belong to
$C^{k+2,\bar\alpha}_{loc}(N)$.

Moreover, if there is a function $\tilde\rho$ on $N$ comparable to $1+d_{g_0}(x_0,-)$ for some $x_0\in N$, and $\tilde\rho$ satisfies $|\nabla\tilde\rho| +
\tilde\rho|dd^c\tilde\rho|\leq C$ for some $C>0$, then we have the decay estimate $|u|\leq C(\epsilon)r^{2-\mu+\epsilon}$ for any sufficiently small $\epsilon>0$.
\end{proposition}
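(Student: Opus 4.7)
The plan is to solve the complex Monge-Amp\`ere equation
\[
(\omega_0 + i\partial\bar\partial u)^m = e^f \omega_0^m
\]
by Yau's continuity method adapted to the complete noncompact setting, with all local analysis carried out through the quasi-atlas. Consider the continuity path
\[
(\omega_0 + i\partial\bar\partial u_t)^m = e^{tf}\omega_0^m, \quad t \in [0,1],
\]
and let $I \subset [0,1]$ be the set of $t$ for which a solution $u_t$ exists in a weighted H\"older space $C^{2,\bar\alpha}_\delta$ built from the quasi-atlas (so that norms are uniformly comparable to Euclidean norms in each chart) with polynomial weight $r^{-\delta}$. Clearly $0\in I$ with $u_0=0$, so it suffices to prove $I$ is open and closed. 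Openness at $t_0$ follows from the inverse function theorem once the linearization $\tfrac{1}{2}\Delta_{\omega_{t_0}}$ is shown to be an isomorphism between the weighted spaces; this is where $SOB(\beta)$ with $\beta>2$ enters, via Fredholm and surjectivity arguments for the weighted Laplacian.

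The heart is closedness, i.e., uniform a priori $C^{2,\bar\alpha}$ bounds along the path, which I would establish in three steps. First, a $C^0$ bound: the $SOB(\beta)$ condition supplies volume doubling and an $L^2$-Poincar\'e inequality (via Buser's theorem, which applies under doubling plus the relatively connected annuli built into $SOB$), hence a weighted Sobolev inequality. Pairing the Monge-Amp\`ere equation with a truncation of $u$ and integrating by parts yields a weighted $L^2$ bound on $u$ whenever $\mu>2$, after which Moser / De Giorgi iteration produces $\|u_t\|_{L^\infty}\leq C$ uniformly in $t$. Second, Yau's classical Laplacian estimate, carried out in each chart of the quasi-atlas and using the $C^{3,\alpha}$ bound on $\omega_0$ to control the bisectional curvature. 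Third, $C^{2,\bar\alpha}$ follows from Evans-Krylov applied chartwise; higher regularity when $f\in C^{k,\bar\alpha}_{loc}$ follows by Schauder bootstrapping.

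For the pointwise decay $|u|\leq C(\epsilon)r^{2-\mu+\epsilon}$, the plan is a barrier argument with $B = A\tilde\rho^{2-\mu+\epsilon}$ for $A$ large. The hypothesis $|\nabla\tilde\rho|+\tilde\rho|dd^c\tilde\rho|\leq C$ together with the identity
\[
dd^c\tilde\rho^{s} = s\tilde\rho^{s-2}\bigl[(s-1)d\tilde\rho\wedge d^c\tilde\rho + \tilde\rho\, dd^c\tilde\rho\bigr]
\]
gives $|dd^c B|_{\omega_0}\leq C'\tilde\rho^{-\mu+\epsilon}$, so $B$ dominates the right-hand side of the linearized equation to the required order. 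Applying a Cheng-Yau maximum principle on complete manifolds to $\pm u - B$, valid because the already-proven $C^0$ bound keeps $u$ bounded while $B\to\infty$, yields $|u|\leq B$ after choosing $A$ large.

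The main obstacle is the $C^0$ estimate. Only the pointwise decay $|f|\leq Cr^{-\mu}$ is assumed, not integrability, so the Tian-Yau $L^1$-based argument does not apply directly; one must obtain the starting integral bound by a carefully weighted integration by parts against the Monge-Amp\`ere equation itself. The polynomial weight that appears in the corresponding Sobolev inequality (whose exponent is tied to $\beta$ through the volume growth in $SOB(\beta)$) is precisely what forces the restriction $\mu<\beta$. Keeping all constants uniform along the continuity path, in particular controlling the dependence of the Sobolev constant on the varying metric $\omega_t$, is the delicate point that ties the whole argument together.
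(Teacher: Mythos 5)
First, a point of order: the paper does not prove this proposition at all. It is quoted verbatim as an external black box from Hein's thesis (\cite{Heinthesis}), so there is no internal proof to compare your attempt against. Your outline is nevertheless a fair description of the general Tian--Yau/Hein strategy that underlies the cited result: continuity method, $C^0$ bound from a weighted Sobolev inequality derived from $SOB(\beta)$ (this is Minerbe's construction, gluing Buser--type Poincar\'e inequalities on annuli, and it is indeed where $\beta>2$ and $\mu<\beta$ enter), Yau's Laplacian estimate and Evans--Krylov chart by chart in the quasi-atlas. Two caveats: the claim that the weighted Laplacian is an isomorphism between weighted H\"older spaces is not available on a general $SOB(\beta)$ manifold with only a quasi-atlas (Fredholm theory of that kind needs asymptotic structure on the ends), so openness has to be argued more carefully than you indicate; and, more seriously, your decay argument is set up in a way that would fail.

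The gap is in the barrier step. From the hypotheses you only have \emph{upper} bounds $|\nabla\tilde\rho|+\tilde\rho|dd^c\tilde\rho|\leq C$, so the identity
\begin{equation*}
dd^c\tilde\rho^{s} = s\tilde\rho^{s-2}\bigl[(s-1)\,d\tilde\rho\wedge d^c\tilde\rho + \tilde\rho\, dd^c\tilde\rho\bigr]
\end{equation*}
gives only $|dd^c\tilde\rho^{s}|\leq C\tilde\rho^{s-2}$ with no sign: for $s=2-\mu+\epsilon<0$ the first term has the \emph{wrong} sign ($s(s-1)>0$) and there is no assumed lower bound on $\tilde\rho\,\Delta\tilde\rho$ to beat it, so $B=A\tilde\rho^{2-\mu+\epsilon}$ need not be a supersolution of the linearized operator. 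Moreover, the maximum principle applied to $\pm u - B$ on the exterior region requires $\liminf(\pm u - B)\leq 0$ at infinity, i.e.\ some a priori decay of $u$, which is exactly what is being proved; the $C^0$ bound alone does not supply it since $B\to 0$ at infinity (not $+\infty$, as your write-up suggests). This is precisely why Hein does not run a barrier argument: he uses $\tilde\rho$ as a \emph{weight} in integral estimates (integration by parts against powers of $\tilde\rho$ times truncations of $u$, followed by Moser iteration on dyadic annuli with the weighted Sobolev inequality), for which the upper bounds on $|\nabla\tilde\rho|$ and $\tilde\rho|dd^c\tilde\rho|$ are exactly what is needed, and which produces the stated rate $r^{2-\mu+\epsilon}$ with the $\epsilon$-loss. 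If you want to keep a barrier argument you would have to add hypotheses of the form $(s-1)|\nabla\tilde\rho|^2+\tilde\rho\,\Delta\tilde\rho\leq -c<0$, which are not part of the statement and are not verified in the paper's application (Lemma \ref{lemma weight function} only checks the upper bounds).
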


\begin{proposition}\label{proposition CY metric on resolution}
Let $(M, g_M, \omega_M)$ be a complete Calabi-Yau metric of complex dimension $n$ with $n>2$. Assume that $M$ admits a K\"ahler potential $K_M$ such that $K_M> 0$, $K_M$ is comparable to $\rho_M^2$ outside a compact set (where $\rho_M$ is a distance function of $M$ measured by $g_M$), and $dK_M\wedge d^cK_M \leq C K_Mdd^c K_M$ for some $C>0$. We also assume that $M$ admits a quasi-atlas which is $C^{k,\alpha}$ for any $k\geq 1$, and $(M,g_M)$ satisfies the $SOB(\beta)$ property for some $\beta>2$. Suppose that there is a finite group $\Gamma$ acting on $M$ preserving $(g_M,\omega_M)$, and the quotient $M/\Gamma$ admits a crepant resolution $\pi: Y \rightarrow M/\Gamma$. Then for any compactly supported K\"ahler class $[\omega_Y]$ of $Y$ and any $c>0$, there exists a Calabi-Yau metric $\omega^\p$ having the same cohomology class as $\omega_Y$ which is asymptotic to $c\pi^*\omega_M$ near the infinity. More precisely we have
\begin{align}
|\nabla^k(\omega^\p - c\pi^*\omega_M)|_{\omega^\p} \leq C(k,\epsilon)(1 + \rho^\p)^{-\beta + 2 +\epsilon},
\end{align}
where $\epsilon > 0$, and $\rho^\p$ is the distance function from a point of $Y$ measured by $\omega^\p$ and $k\geq 0$.
\end{proposition}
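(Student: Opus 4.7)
The plan is to apply the Tian--Yau method via Hein's solvability result (Proposition~\ref{hein's thesis}) to a Monge--Amp\`ere equation on $Y$ whose right-hand side is compactly supported. First I would construct a background K\"ahler form $\hat\omega$ on $Y$ in the prescribed cohomology class that agrees with $c\pi^{*}\omega_M$ outside a compact set: averaging $K_M$ over $\Gamma$ if needed, Lemma~\ref{lemma construction of asymptotic CY metric} runs $\Gamma$-equivariantly on the end of $M/\Gamma$, and a standard gluing with a compactly supported representative of $[\omega_Y]$ (a compactly supported $\partial\bar\partial$-type correction near the exceptional set) extends the construction to all of $Y$. Because $(M,\omega_M)$ is Calabi-Yau, it carries a parallel holomorphic volume form $\Omega_M$ with $\omega_M^n = c_n\,\Omega_M\wedge\bar\Omega_M$, and the crepant hypothesis ensures that its $\Gamma$-invariant descent extends holomorphically to a nowhere-vanishing $\Omega_Y$ on $Y$. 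Setting
\begin{equation*}
f = \log\frac{c^n c_n\,\Omega_Y\wedge\bar\Omega_Y}{\hat\omega^n},
\end{equation*}
one checks that $f$ is compactly supported, so the equation we need to solve is $(\hat\omega + i\partial\bar\partial u)^n = e^f\,\hat\omega^n$.

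Next I would verify the hypotheses of Proposition~\ref{hein's thesis} for $(Y,\hat\omega)$. A $C^{k,\alpha}$ quasi-atlas on $Y$ is obtained by descending the $\Gamma$-equivariant quasi-atlas of $M$ (from the paper's hypothesis) to charts on $M/\Gamma$ based at points whose $\Gamma$-orbits are well separated, and adjoining finitely many holomorphic charts near the compact exceptional set. The $SOB(\beta)$ property of $(M,g_M)$ descends to $M/\Gamma$ since $\Gamma$ is finite and acts by isometries, and it is preserved under modification of the metric on a compact set. Since $f$ is compactly supported, $|f|\le C(\rho^\p)^{-\mu}$ holds trivially for any $\mu$; I would take $\mu = \beta - \tfrac{\epsilon}{2}$ so that $\beta>\mu>2$. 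Finally, Lemma~\ref{lemma weight function} applied to $K_M$ gives a $\Gamma$-invariant weight $h_{1/2}$ on $M$, which glues to a function $\tilde\rho$ on $Y$ comparable to $1+\rho^\p$ with $|\nabla\tilde\rho| + \tilde\rho|dd^c\tilde\rho|\le C$. Proposition~\ref{hein's thesis} then furnishes $u\in C^{4,\bar\alpha}$ with $\omega^\p := \hat\omega + i\partial\bar\partial u$ uniformly equivalent to $\hat\omega$ and $(\omega^\p)^n = c^n c_n\,\Omega_Y\wedge\bar\Omega_Y$, so that $\omega^\p$ is a Calabi-Yau metric in $[\omega_Y]$, together with the $C^{0}$ decay $|u|\le C(\epsilon)(1+\rho^\p)^{2-\mu+\epsilon/2}\le C(\epsilon)(1+\rho^\p)^{-\beta+2+\epsilon}$.

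To upgrade this $C^{0}$ bound on $u$ to the stated $C^k$ decay of $\omega^\p - c\pi^{*}\omega_M = i\partial\bar\partial u$, I would run a standard rescaling argument. Outside the compact support of $f$, $u$ solves the homogeneous complex Monge--Amp\`ere equation $(\hat\omega + i\partial\bar\partial u)^n = \hat\omega^n$. Pulling back by a quasi-atlas chart $\Phi_x$ at a point with $\rho^\p(x)\sim R$ and dilating the coordinates by $R$, the rescaled $\hat\omega$ is uniformly close to the Euclidean K\"ahler form on the unit ball with $C^{k,\alpha}$ control, while $u$ rescales to an $O(R^{-\beta+2+\epsilon})$ function satisfying a uniformly elliptic homogeneous Monge--Amp\`ere equation. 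Evans--Krylov and then Schauder estimates for the linearized equation on an interior ball yield $C^{k+2,\alpha}$ bounds; unwinding the rescaling gives
\begin{equation*}
|\nabla^{k}(i\partial\bar\partial u)|_{\omega^\p}\le C(k,\epsilon)(1+\rho^\p)^{-\beta+2+\epsilon},
\end{equation*}
which is the desired estimate. The main obstacle is the $\Gamma$-equivariant bookkeeping: transporting the quasi-atlas, the weight function and the $SOB(\beta)$ property cleanly from $M$ to $M/\Gamma$, and then across the compact exceptional locus of $\pi$ so that Hein's hypotheses are genuinely verified on all of $Y$. Once this geometric setup is in place, the analytic core is a direct invocation of Hein's theorem followed by routine weighted Schauder theory in the rescaled charts.
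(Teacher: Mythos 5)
Your overall architecture is the same as the paper's: average $K_M$ over $\Gamma$, use Lemma \ref{lemma construction of asymptotic CY metric} with $K=\pi^*K_M$ to build $\hat\omega\in[\omega_Y]$ equal to $c\pi^*\omega_M$ outside a compact set, observe that the Ricci potential is compactly supported, invoke Proposition \ref{hein's thesis} together with the weight function of Lemma \ref{lemma weight function} to get $u$ with $|u|\leq C(\epsilon)(1+\rho^\p)^{-\beta+2+\epsilon}$, and then bootstrap to derivative decay. You are in fact more careful than the paper on one point it leaves implicit, namely that the quasi-atlas and the $SOB(\beta)$ property must actually be transported from $M$ to $(Y,\hat\omega)$ (descent to $M/\Gamma$ away from the fixed locus, finitely many extra charts near the exceptional set, invariance of $SOB(\beta)$ under compact modification); spelling this out is a genuine improvement.

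The one step that does not work as written is the final rescaling argument. You pull back by a quasi-atlas chart $\Phi_x$ at $\rho^\p(x)\sim R$ and ``dilate the coordinates by $R$'', claiming the rescaled $\hat\omega$ is uniformly close to the Euclidean form on the unit ball. In the setting of this proposition there is no curvature decay or injectivity radius growth hypothesis, and in the intended ALF application the geometry at scale $R$ is collapsed along circle directions, so balls of radius comparable to $R$ are nowhere near Euclidean; moreover the quasi-atlas charts are by definition of fixed unit size, so there is nothing to dilate by $R$ inside them. The repair is exactly what the paper does: work at the fixed scale of the quasi-atlas. Outside the (compact) support of the Ricci potential the Monge--Amp\`ere equation can be rewritten as
\begin{align}
i\partial\bar\partial u\wedge\sum_{k=0}^{n-1}(\omega^\p)^{k}\wedge\hat\omega^{\,n-1-k}=0,
\end{align}
a uniformly elliptic linear equation for $u$ on each unit chart (uniform ellipticity coming from the uniform equivalence $\omega^\p\sim\hat\omega$ furnished by Hein's theorem, and coefficient regularity from Evans--Krylov plus the $C^{k,\alpha}$ quasi-atlas). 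Interior Schauder estimates on each chart then convert the $C^0$ decay $|u|\leq C(\epsilon)(1+\rho^\p)^{-\beta+2+\epsilon}$ into $|\nabla^k u|_{\omega^\p}\leq C(k,\epsilon)(1+\rho^\p)^{-\beta+2+\epsilon}$, with no gain in the exponent as $k$ increases --- which is precisely the estimate claimed in the statement. With that substitution your proof matches the paper's.
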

\begin{proof}
First, without loss of generality, we may assume that $K_M$ is invariant by $\Gamma$. If not, let $\tilde{K}_M = \frac{1}{|\Gamma|}\sum_{\gamma\in\Gamma}\gamma^*K_M$ be the average of $K$ by $\Gamma$, then $\tilde{K}_M$ is still a positive K\"ahler potential comparable to $\rho_M^2$. Note that $dK\wedge d^cK \leq CKdd^cK$ is equivalent to $(dK)^2 \leq C K g$, so by enlarging $C$ if necessary, we still have $d\tilde{K}_M\wedge d^c\tilde{K}_M \leq C \tilde{K}_Mdd^c \tilde{K}_M$. So we can assume that $K_M$ is well-defined on $M/\Gamma$.

Applying Lemma \ref{lemma construction of asymptotic CY metric} with $K = \pi^*K_M$, there exists a K\"ahler form $\hat{\omega}$ on $Y$ having the same cohomology class as $\omega_Y$ and coincides with $c\pi^*\omega_M$ outside a compact set. Let $\Omega_Y$ be a holomorphic volume form on $Y$ such that $i^{n^2}\Omega_Y\wedge \bar\Omega_Y = \pi^*\omega_M^n$ and let $\hat{f}$ be the Ricci potential
\begin{align}
\hat{f} = \log \frac{i^{n^2}\Omega_Y\wedge \bar\Omega_Y}{(\hat{\omega}/c)^n}.
\end{align}
Then $\hat{f}$ is compactly supported on $Y$ and in particular, we have $|\hat{f}| \leq C\rho_{\hat{\omega}}^{-\mu}$ for any $2<\mu<2n-1$, where $\rho_{\hat{\omega}}$ is a distance function measured by $\hat{\omega}$. By Proposition \ref{hein's thesis}, there exists $\bar{\alpha\in (0, \alpha]}$ such that we get a solution $u\in C^{4,\bar{\alpha}}(Y)$ of the Monge-Amp\`{e}re equation
\begin{align}
(\hat{\omega} + i\partial\bar\partial u)^n = e^{\hat{f}}\hat{\omega}^n.
\end{align}
Let $\omega^\p = \hat{\omega} + i\partial\bar\partial u$, then $\omega^\p$ is Calabi-Yau and it is uniformly equivalent to $\hat{\omega}$ hence $\rho^\p$ is comparable to $\rho_{\hat{\omega}}$. By Lemma \ref{lemma weight function}, we may apply the second part of Proposition \ref{hein's thesis} to have $|u| \leq C(\epsilon)(\rho^\p)^{-\beta + 2 +\epsilon}$ for any $\epsilon > 0$ sufficiently small.

If we think of $\omega^\p,\hat{\omega}$ as given, then the Monge-Amp\`{e}re equation can be written as
\begin{align}
(e^{\hat{f}} - 1)\hat{\omega}^n = i\partial\bar\partial u \wedge \sum_{k=1}^{n-1}(\omega^\p)^k\wedge \hat\omega^{n-1-k},
\end{align}
and it can be viewed as an elliptic equation of $u$. Outside a compact set, the left hand side is zero so by Schauder estimates on each chart of the quasi-atlas outside this compact set, we find that $|\nabla^ku|_{\omega^\p} \leq C(k,\epsilon)(\rho^\p)^{-\beta + 2 +\epsilon}$ for $k\geq 0$.
\end{proof}

\begin{proof}[Proof of Theorem \ref{theorem ALF metric asymptotic to apostolov}]
We wish to apply Proposition \ref{proposition CY metric on resolution} to $\beta=2n-1$ and the metric of Apostolov-Cifarelli $(\mathbb{C}^n, g, \omega)$, which is complete and Calabi-Yau. By Proposition \ref{proposition properties of potential}, it admits a K\"ahler potential $H$ such that $H\geq 0$, $H$ is comparable to $\rho^2$ outside a compact set and $dH \wedge d^cH \leq C^\p Hdd^cH$ for some $C^\p > 0$. The existence of quasi-atlas is proved in Proposition \ref{proposition quasi-atlas Apostolov}, and the $SOB(2n-1)$ property of $g$ is proved in Proposition \ref{proposition SOB(2n-1)}. Thus all the assumptions of Proposition \ref{proposition CY metric on resolution} are satisfied, and it produces the ALF Calabi-Yau metric on the crepant resolution asymptotic to the metric of Apostolov-Cifarelli.
\end{proof}

\subsection{ALF Calabi-Yau metrics on $\mathcal{K}_{\mathbb{CP}^{n-1}}$ modeled on the metric of Apostolov-Cifarelli}
Recall that the action of cyclic group $\mathbb{Z}_n$ of order $n$ on $\mathbb{C}^n$ generated by $(z_1,\dots,z_n)\mapsto (e^{\frac{2\pi i}{n}}z_1,\dots, e^{\frac{2\pi i}{n}}z_n)$ is a subgroup action of the standard action of $\mathbb{T}^n$ on $\mathbb{C}^n$. Since the metric $(\mathbb{C}^n, g, \omega)$ of Apostolov-Cifarelli is $\mathbb{T}^n$-equivariantly biholomorphic to the standard $\mathbb{C}^n$, we know that $\mathbb{Z}_n$ acts on $(\mathbb{C}^n, g, \omega)$ holomorphically. Moreover, the K\"ahler potential $H$ depends only on $\xi_1,\xi_2$, and $\xi_1,\xi_2$ only depends on the moment maps $\sigma_1,\sigma_2$, so the potential $H$ is invariant by $\mathbb{T}^n$, hence $\mathbb{Z}_n$. Applying Theorem \ref{theorem ALF metric asymptotic to apostolov}, we get
\begin{proposition}\label{proposition ALF CY metrics on K_CP apostolov}
For any $c>0$ and $\epsilon>0$, there exists an ALF Calabi-Yau metric $\omega^\p$ on $\mathcal{K}_{\mathbb{CP}^{n-1}}$ in the sense that $\omega^\p$ is asymptotic to $c\omega$ and $[\omega^\p] = \epsilon[\omega_{Cal}]$, where $\omega_{Cal}$ is the Calabi metric constructed in \cite{calabi1979metriques}.
\end{proposition}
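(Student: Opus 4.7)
The plan is to apply Theorem \ref{theorem ALF metric asymptotic to apostolov} in the specific case $\Gamma = \mathbb{Z}_n$ generated by the diagonal action $(z_1,\dots,z_n)\mapsto (e^{2\pi i/n}z_1,\dots,e^{2\pi i/n}z_n)$. The only things to check are the hypotheses of that theorem and the identification of the relevant compactly supported K\"ahler classes on $\mathcal{K}_{\mathbb{CP}^{n-1}}$.

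The first step is to verify $\mathbb{Z}_n\subset U(1)\times U(n-1)$. Under the splitting $\mathbb{C}^n = \mathbb{C}\oplus\mathbb{C}^{n-1}$ induced by the blow-down $\hat{M}=\mathcal{O}_{\mathbb{CP}^{n-2}}(-1)\oplus\mathbb{C}\rightarrow\mathbb{C}^n$ of Section \ref{section The special case}, the scalar multiplication by $e^{2\pi i/n}$ acts as $e^{2\pi i/n}$ on the trivial $\mathbb{C}$ factor and as $e^{2\pi i/n}\cdot\mathrm{Id}$ on the $\mathbb{C}^{n-1}$ factor, so $\mathbb{Z}_n$ sits inside $U(1)\times U(n-1)$ as required. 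The second step is the standard fact that $\pi:\mathcal{K}_{\mathbb{CP}^{n-1}}\rightarrow \mathbb{C}^n/\mathbb{Z}_n$ is a crepant resolution of the isolated $\tfrac{1}{n}(1,\dots,1)$ singularity, obtained by blowing up the origin.

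The third step is to identify the compactly supported K\"ahler classes. Since $\mathcal{K}_{\mathbb{CP}^{n-1}}$ is the total space of a complex line bundle over $\mathbb{CP}^{n-1}$, the Thom isomorphism gives
\begin{align*}
H^2_c(\mathcal{K}_{\mathbb{CP}^{n-1}},\mathbb{R})\cong H^0(\mathbb{CP}^{n-1},\mathbb{R})\cong\mathbb{R}.
\end{align*}
The Calabi metric $\omega_{Cal}$ of \cite{calabi1979metriques} is a compactly supported K\"ahler form (its cohomology class is the Thom class), so its cohomology class generates this line and every compactly supported K\"ahler class on $\mathcal{K}_{\mathbb{CP}^{n-1}}$ is of the form $\epsilon[\omega_{Cal}]$ for some $\epsilon>0$.

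The fourth and final step is to invoke Theorem \ref{theorem ALF metric asymptotic to apostolov} directly with $Y=\mathcal{K}_{\mathbb{CP}^{n-1}}$, the compactly supported K\"ahler class $\epsilon[\omega_{Cal}]$, and the given constant $c>0$; this produces an ALF Calabi-Yau metric $\omega^\p$ with $[\omega^\p]=\epsilon[\omega_{Cal}]$ that is asymptotic to $c\omega$ near infinity at the decay rate stated there. There is no substantive obstacle here: the content is entirely in Theorem \ref{theorem ALF metric asymptotic to apostolov}, and the only work is to observe that the $\mathbb{Z}_n$-symmetry assumption and the description of the K\"ahler cone are both immediate.
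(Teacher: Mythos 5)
Your proposal is correct and follows essentially the same route as the paper: both reduce the statement to Theorem \ref{theorem ALF metric asymptotic to apostolov} applied with $Y=\mathcal{K}_{\mathbb{CP}^{n-1}}$ and $\Gamma=\mathbb{Z}_n$, the only differences being that the paper verifies the $\mathbb{Z}_n$-invariance of $(g,\omega)$ (and of the potential $H$) via the $\mathbb{T}^n$-symmetry rather than the $U(1)\times U(n-1)$ containment, and leaves the identification of the compactly supported K\"ahler classes with $\epsilon[\omega_{Cal}]$, $\epsilon>0$, implicit. One cosmetic caveat: the Calabi K\"ahler form itself is not compactly supported, so you should phrase that step as saying its class lies in the image of $H^2_c(\mathcal{K}_{\mathbb{CP}^{n-1}},\mathbb{R})\cong\mathbb{R}$ and spans the ray of K\"ahler classes there, which is exactly what your Thom-isomorphism argument gives.
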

It is then interesting to figure out the asymptotic cone of $\omega^\p$ constructed in Proposition \ref{proposition ALF CY metrics on K_CP apostolov}.
\begin{proposition}
The asymptotic cone of $\omega^\p$ in Proposition \ref{proposition ALF CY metrics on K_CP apostolov} is $(\mathbb{C}^{n-1}/\mathbb{Z}_{k(n-1)})\times \mathbb{R}$, where $k = n$ if $n$ is odd and $k=\frac{n}{2}$ if $n$ is even. Here $\mathbb{Z}_{k(n-1)}$ acts on $\mathbb{C}^{n-1}$ by multiplying $\zeta_{k(n-1)} = e^{\frac{2\pi i}{k(n-1)}}$.
\end{proposition}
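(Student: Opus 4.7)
Proof plan. Since $\omega^\p$ is asymptotic to $c\pi^*\omega$ with $\pi:\mathcal{K}_{\mathbb{CP}^{n-1}}\to\mathbb{C}^n/\mathbb{Z}_n$, and the Gromov--Hausdorff scaling limit is invariant under rescaling and commutes with quotient by a finite group of isometries, the asymptotic cone of $\omega^\p$ equals the $\mathbb{Z}_n$-quotient of the asymptotic cone $(\mathbb{C}^{n-1}/\mathbb{Z}_{n-1})\times\mathbb{R}$ of $(\mathbb{C}^n,\omega)$ established in Theorem \ref{theorem asymptotic cone of g}. Because $\mathbb{Z}_n\subset\mathbb{T}^n$ preserves the moment map $\sigma$, the descended action fixes the $\mathbb{R}$-factor pointwise, so only the induced rotation on the $\mathbb{C}^{n-1}/\mathbb{Z}_{n-1}$-factor needs to be identified.

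The plan is to compute this descended action using the $\mathbb{T}^l=\mathbb{T}^2$-equivariance of the isometric embedding $\iota:\mathbb{C}^n\to E$ from Proposition \ref{proposition isometric embedding of g prime}. In the basis $(v_1,v_2)$ of $\op{Lie}(\mathbb{T}^2)$ dual to $(T_1,T_2)$, the generator $\zeta_n$ (which rotates both factors of $\mathbb{C}^{n-1}\times\mathbb{C}$ by $e^{2\pi i/n}$) is represented by $\tfrac{1}{n}(v_1+v_2)$; expanding via $v_1=-2e_2$ and $v_2=-\tfrac{2}{n-1}e_1+\tfrac{2}{n-1}e_2$ gives
\begin{align*}
\zeta_n = -\tfrac{2}{n(n-1)}e_1 - \tfrac{2(n-2)}{n(n-1)}e_2.
\end{align*}
A short calculation (using $-v_1/(n-1)-\tfrac{2}{n-1}e_1=-v_2\in\Gamma_v$, so $-v_1/(n-1)\in\overline{\mathbb{R}e_1}$) shows that the subtorus $\Lambda^+\subset\mathbb{T}^2$ of Theorem \ref{theorem asymptotic cone of g} equals the closed circle $\overline{\mathbb{R}e_1}$. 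Quotienting by $\Lambda^+$ therefore removes the $e_1$-component of $\zeta_n$; since $K_2=-\tfrac{1}{2}T_1$ corresponds to $e_2$, the residual rotation of $\mathbb{C}^{n-1}/\mathbb{Z}_{n-1}$ is by $\mu:=e^{2\pi i(n-2)/(n(n-1))}$.

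The final step is to form the quotient $(\mathbb{C}^{n-1}/\mathbb{Z}_{n-1})/\langle\mu\rangle=\mathbb{C}^{n-1}/\langle\mu,e^{2\pi i/(n-1)}\rangle$. By a rational-GCD computation the cyclic subgroup $\langle\mu,e^{2\pi i/(n-1)}\rangle\subset\mathbb{S}^1$ is generated by $e^{2\pi i\gcd(n-2,n)/(n(n-1))}$, and $\gcd(n-2,n)=\gcd(n-2,2)$ equals $1$ when $n$ is odd and $2$ when $n$ is even. Hence the subgroup is $\mathbb{Z}_{k(n-1)}$ with $k=n$ or $k=n/2$ in the two cases, and combined with the triviality on the $\mathbb{R}$-factor this yields the claimed cone $(\mathbb{C}^{n-1}/\mathbb{Z}_{k(n-1)})\times\mathbb{R}$.

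The main subtlety is in the second step: the naive guess that $\mathbb{Z}_n$ acts on the cone simply by $e^{2\pi i/n}$ is wrong, because the twisted $\mathbb{Z}$-identification defining $E$ together with the $\Lambda^+$-collapse converts part of the $\mathbb{C}$-factor rotation into an extra rotation of the $\mathbb{C}^{n-1}$-factor. Concretely, $\mu$ differs from $e^{2\pi i/n}$ by $e^{-2\pi i/(n(n-1))}$, which is not in $\mathbb{Z}_{n-1}$, so this correction genuinely changes the quotient group; tracking it through the change of basis $(v_1,v_2)\leftrightarrow(e_1,e_2)$ is the heart of the argument.
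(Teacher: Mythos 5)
Your proposal is correct and takes essentially the same route as the paper: identify the $\mathbb{Z}_n$-generator with $\tfrac{1}{n}(v_1+v_2)$, pass to the quotient of $\mathbb{T}^2$ by $\Lambda^+=\overline{\mathbb{R}e_1}$ (the collapsed $K_1$-direction) so that only the $e_2$-component survives as a rotation of $\mathbb{C}^{n-1}/\mathbb{Z}_{n-1}$, and then combine it with $\mathbb{Z}_{n-1}$ via the $\gcd(n,n-2)$ computation. The only blemish is the inconsequential sign slip $-v_1/(n-1)-\tfrac{2}{n-1}e_1=v_2$ (not $-v_2$), which does not affect the conclusion that $\Lambda^+=\overline{\mathbb{R}e_1}$.
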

\begin{proof}
Note that this $\mathbb{Z}_n$-action is a subgroup action of the $\mathbb{T}^2$-action, in fact its generator corresponds to $\frac{1}{n}(T_1 + T_2)$ under $\exp\circ2\pi$. So it suffices to understand the $\mathbb{T}^2$-action on $(\mathbb{C}^{n-1}/\mathbb{Z}_{n-1})\times\mathbb{R}$ induced by the Gromov-Hausdorff approximation $f_\lambda: ((\mathbb{C}^{n-1}\times \mathbb{S}^1)/\mathbb{Z}_{n-1})\times \mathbb{R} \rightarrow (\mathbb{C}^{n-1}/\mathbb{Z}_{n-1})\times\mathbb{R}$. Since all the $f_\lambda$ is obtained from $f_1$ by a scaling, it suffices to understand the $\mathbb{T}^2$-action on $(\mathbb{C}^{n-1}/\mathbb{Z}_{n-1})\times\mathbb{R}$ induced by $f_1$. By its definition, the map of $f_1$ maps the $\mathbb{S}^1$-orbit generated by $K_1$ to a point. Thus the $\mathbb{T}^2$-action on $(\mathbb{C}^{n-1}/\mathbb{Z}_{n-1})\times\mathbb{R}$ degenerates to the $\mathbb{S}^1 = \mathbb{T}^2/Span\{e_1\}$-action generated by $K_2$.

Recall that $\mathbb{T}^2 = \mathbb{R}^2/\Gamma_v$ where $\Gamma_v$ is the lattice generated by $v_1 = (0,-2)$, $v_2 = \frac{2}{n-1}(-1,1)$, and $v_i$ corresponds to the vector field $T_i$. Then $e_1=(1,0)$ corresponds to $K_1$ and $e_2 = (0,1)$ corresponds to $K_2$, and $\theta_i(K_j) = \delta_{ij}$. It follows that the subgroup in $\mathbb{T}^2$ generated by the direction of $e_1$ intersects with the subgroup generated by $e_2$ at exactly $n-1$ points, which explains why $\Lambda = \mathbb{Z}_{n-1}$. In fact, taking the quotient of $\mathbb{T}^2$ by the subgroup generated by $e_1$, we get $\mathbb{T}^2/Span\{e_1\} = \mathbb{R}/\frac{2}{n-1}\mathbb{Z}$ and the quotient map $\mathbb{T}^2\rightarrow \mathbb{T}^2/Span\{e_1\}$ is simply the map taking the second coordinate.

Now $\frac{1}{n}(v_1+v_2) = (\frac{2}{n(n-1)}, \frac{-2(n-2)}{n(n-1)})$, so its image under the quotient map is $\frac{-2(n-2)}{n(n-1)}$ modulo $\frac{2}{n-1}\mathbb{Z}$. So it generates a subgroup $\mathbb{Z}_k$ of $\mathbb{S}^1=\mathbb{T}^2/Span\{e_1\}$ acting on $(\mathbb{C}^{n-1}/\mathbb{Z}_{n-1})\times\mathbb{R}$, where $k=n$ if $n$ is odd and $k=\frac{n}{2}$ if $n$ is even. Thus the asymptotic cone of $\omega^\p$ in Proposition \ref{proposition ALF CY metrics on K_CP apostolov} is $(\mathbb{C}^{n-1}/\mathbb{Z}_{k(n-1)})\times \mathbb{R}$.
\end{proof}

\bibliographystyle{abbrv}
\bibliography{bibnoteApostolovCifarelli}
\end{document}